\theoremstyle{plain}
    \newtheorem{thm}{Theorem}[section]
    \newtheorem{lem}[thm]   {Lemma}
\theoremstyle{definition}
    \newtheorem{defn}[thm]  {Definition}
    \newtheorem{ex}{Example}
\newcommand{\C}{\mathbb{C}}
\newcommand{\cC}{\mathfrak{C}}
\newcommand{\fc}{\mathfrak{c}}
\newcommand{\D}{\mathcal{D}}
\newcommand{\F}{\mathbb{F}}
\newcommand{\cF}{\mathcal{F}}
\renewcommand{\H}{\mathcal{H}}
\newcommand{\M}{\mathcal{M}}
\newcommand{\N}{\mathcal{N}}
\newcommand{\p}{p}
\newcommand{\cP}{\mathcal{P}}
\newcommand{\Q}{\mathbb{Q}}
\renewcommand{\S}{\mathcal{S}}
\newcommand{\fS}{\mathfrak{S}}
\newcommand{\T}{\mathcal{T}}
\newcommand{\U}{\mathcal{U}}
\renewcommand{\u}{\mathfrak{u}}
\newcommand{\V}{\mathcal{V}}
\renewcommand{\v}{\mathfrak{v}}
\newcommand{\Z}{\mathbb{Z}}
\newcommand{\sgm}{\Sigma_{g,1}^m}
\newcommand{\sg}{\Sigma_{g,1}}
\renewcommand{\gg}{\Gamma_{g,1}}
\newcommand{\ggm}{\gg^m}
\newcommand{\bms}{\beta_{m}(\sg)}
\newcommand{\cms}{C_m(\S)}
\newcommand{\cmsD}{C_{m}^{\D}(\S)}
\newcommand{\fmstwoD}{C_{1,m}^{(2),\D}(\S)}
\newcommand{\cmstwoD}{C_{m+1}^{(2),\D}(\S)}
\newcommand{\Dmone}{\triangle_{m+1}}
\newcommand{\Donem}{\triangle_{1,m}}
\renewcommand{\j}{j}
\newcommand{\pr}{\mathfrak{p}}
\newcommand{\tup}{\mathfrak{h}}
\newcommand{\Ch}{\mathcal{C}}
\newcommand{\tCh}{\tilde\Ch}
\newcommand{\tH}{\tilde{H}}
\newcommand{\ZcC}[1]{\Z_2\left<\cC^{#1}\right>}
\newcommand{\tpsi}{\tilde\psi}
\newcommand{\pa}[1]{\left(#1\right)}
\newcommand{\set}[1]{\left\{#1\right\}}
\newcommand{\Sone}{\mathbb{S}^1}
\newcommand{\mrS}{\mathring{\S}}
\newcommand{\ux}{\underline{x}}
\newcommand{\uu}{\underline{u}}
\newcommand{\uv}{\underline{v}}
\newcommand{\ualpha}{\underline{\alpha}}
\newcommand{\tildeu}{\tilde u}
\newcommand{\tildev}{\tilde v}
\newcommand{\SP}{S\!P}
\renewcommand{\phi}{\varphi}
\renewcommand{\epsilon}{\varepsilon}
\newcommand{\hor}{hor}
\DeclareMathOperator{\Diff}{Diff}
\DeclareMathOperator{\Sym}{Sym}
\DeclareMathOperator{\Hom}{Hom}
\def\colim{\mathop{\mathrm{colim}}\nolimits}
\begin{document}

\title{Splitting of the homology of the punctured mapping class group}

\author{Andrea Bianchi}

\address{Mathematics Institute,
University of Bonn,
Endenicher Allee 60, Bonn,
Germany
}

\email{bianchi@math.uni-bonn.de}

\date{\today}

\keywords{Mapping class group, braid group, symplectic coefficients.}
\subjclass[2010]{20F36, 
55N25, 
55R20, 
55R35, 
55R40, 
55R80, 
55T10. 
}

\begin{abstract}
Let $\Gamma_{g,1}^m$ be the mapping class group of the orientable surface $\Sigma_{g,1}^m$ of genus $g$ with one parametrised
boundary curve and $m$ permutable punctures; when $m=0$ we omit it from the notation.
Let $\beta_{m}(\Sigma_{g,1})$ be the braid group on $m$ strands
of the surface $\Sigma_{g,1}$.

\noindent We prove that $H_*(\Gamma_{g,1}^m;\mathbb{Z}_2)\cong H_*(\Gamma_{g,1};H_*(\beta_{m}(\Sigma_{g,1});\mathbb{Z}_2))$. The main ingredient
is the computation of $H_*(\beta_{m}(\Sigma_{g,1});\mathbb{Z}_2)$ as a symplectic representation of $\Gamma_{g,1}$.
\end{abstract}

\maketitle

\section{Introduction}
Let $\sg$ be a smooth orientable surface of genus $g$ with one boundary curve $\partial\sg$, and let $\sgm$ be $\sg$ with
a choice of $m$ distinct points in the interior, called \emph{punctures}.

Let $\gg$ be the mapping class group of $\sg$, i.e. the group of isotopy classes of diffeomorphisms of $\sg$:
diffeomorphisms are required to fix $\partial\sg$ pointwise. Similarly let $\ggm$ be the mapping class group of $\sgm$, i.e.
the group of isotopy classes of diffeomorphisms of $\sgm$ that fix $\partial\sgm$ pointwise and \emph{permute} the $m$ punctures.

Forgetting the punctures gives a surjective map $\ggm\to\gg$ with kernel $\bms$,
the $m$-th \emph{braid group} of the surface $\sg$. We obtain the Birman exact sequence (see \cite{Birman:mcgbr})
\begin{equation}
\label{eq:Birman}
1\to\bms\to\ggm\to\gg\to 1.
\end{equation}

The associated Leray-Serre spectral sequence $E(m)$ in $\Z_2$-homology has a second page $E(m)^2_{k,q}=H_k(\gg;H_q(\bms;\Z_2))$,
and converges to $H_{k+q}(\ggm;\Z_2)$.

The main result of this article is that this spectral sequence collapses in its second page.
\begin{thm}
\label{thm:main}
For all $l\geq 0$ there is an isomorphism of vector spaces
\begin{equation}
\label{eq:main}
H_l\pa{\ggm;\Z_2}\cong \bigoplus_{k+q=l} H_k\pa{\gg;H_q\pa{\bms;\Z_2}}.
\end{equation}
\end{thm}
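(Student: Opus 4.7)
Since we work over the field $\Z_2$, the isomorphism~\eqref{eq:main} is equivalent to collapse of the spectral sequence $E(m)$ at its $E^2$-page. I would attack collapse by first computing the coefficient system $H_*(\bms;\Z_2)$ as a $\gg$-module, then producing enough geometric classes in $H_*(\ggm;\Z_2)$ to realize every class of $E^2$ as a permanent cycle.

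\textbf{Step 1 (coefficients).} Since the unordered configuration space $C_m(\sg)$ is aspherical, $H_*(\bms;\Z_2)\cong H_*(C_m(\sg);\Z_2)$, with $\gg$ acting through the induced diffeomorphism action on $C_m(\sg)$. Working mod~$2$ and applying B\"odigheimer / Cohen--Taylor-type scanning models carried through $\gg$-equivariantly, I would produce a decomposition of $H_*(C_m(\sg);\Z_2)$ as a direct sum of symplectic $\gg$-representations built from the standard representation $H=H_1(\sg;\Z_2)$ via tensor, symmetric, and exterior powers; absence of torsion mod~$2$ should make this decomposition clean.

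\textbf{Step 2 (lifting and collapse).} The Birman extension admits a section $\gg\to\ggm$ obtained by placing the $m$ punctures in a collar of $\partial\sg$; hence the bottom row $E^2_{*,0}=H_*(\gg;\Z_2)$ already consists of permanent cycles. For higher rows I would construct, for each summand $V$ of $H_q(\bms;\Z_2)$ identified in Step~1, geometric classes in $H_{k+q}(\ggm;\Z_2)$ that lift a basis of $H_k(\gg;V)$; typical constructions would use surface bundles over subgroups of $\gg$ equipped with sections carrying punctures along embedded loops representing prescribed symplectic basis elements of $H$. It then suffices to produce enough permanent cycles so that $\dim H_l(\ggm;\Z_2)\geq\sum_{k+q=l}\dim E^2_{k,q}$; combined with the reverse inequality $\dim H_l\le\sum\dim E^2_{k,q}$ from the spectral sequence, this forces collapse.

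\textbf{Main obstacle.} The technical core is Step~1: the classical splittings of configuration-space homology do not a priori respect the mapping class group action, and upgrading them to $\gg$-equivariant statements phrased in symplectic-representation-theoretic language is the heart of the argument. Once Step~1 is carried out in enough detail to identify each $H_q(\bms;\Z_2)$ as a concrete symplectic $\gg$-module, Step~2 becomes a (still nontrivial but relatively concrete) geometric exercise in producing homology classes and verifying the dimension bound.
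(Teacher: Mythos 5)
Your Step 1 is exactly the paper's strategy (it is Theorem~\ref{thm:Hbms*as*ggrep} here): identify $H_*(\bms;\Z_2)$ as a symplectic $\gg$-representation, with the answer being $\Z_2[Q^j\epsilon]\otimes\Sym_\bullet(\H)$, and this is indeed where most of the work lies. You are also right that the section $\gg\to\ggm$ obtained by parking the punctures in a fixed disc $\D$ near the boundary makes the bottom row $E^2_{*,0}$ a collection of permanent cycles.

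Where you diverge is Step 2, and the gap there is not cosmetic. Your proposal for the higher rows---``surface bundles over subgroups of $\gg$ equipped with sections carrying punctures along embedded loops''---is a genuinely different construction from the paper's, and it is underspecified in a way that is unlikely to close: for a general twisted coefficient system $V$ and a general class in $H_k(\gg;V)$ with $k$ large, there is no recipe for a geometric cycle in $H_{k+q}(\ggm)$ projecting to it, and the dimension-count you would need to verify at the end is exactly as hard as the collapse you are trying to establish. The paper instead runs an induction on $m$ through the stabilisation map $\mu^{\cF}\colon C_p(\D)\times C_{m-p}(\cF_{\S'})\to C_m(\cF_{\S,\D})$. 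The crucial input is Lemma~\ref{lem:oplussplitting}: for each $l\geq 1$ the coefficient module in row $q=m-l$ splits as $\bigoplus_{p=l}^m \mu_*\bigl(H_{p-l}(C_p(\D))\otimes H_{m-p}(C_{m-p}(\S'))\bigr)$, and each summand is hit by the map of spectral sequences $H_*(C_p(\D))\otimes E(m-p)\to E(m)$ induced by $\mu^{\cF}$ on the $E^2$-page. Since $p\geq l\geq 1$, the source involves $E(m-p)$ with $m-p<m$, which collapses by the inductive hypothesis, so all differentials leaving those summands of $E(m)$ vanish; the top row $q=m$ has no exiting differentials to begin with. This module-over-$H_*(C_\bullet(\D))$ structure, compatible with the spectral sequences, is the mechanism you are missing. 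If you want to salvage your permanent-cycle picture, the classes $\mu^{\cF}_*$ of $H_{p-l}(C_p(\D))\otimes H_k(\gg^{m-p};\Z_2)$ are precisely the geometric lifts you were asking for, built inductively rather than from embedded loops.
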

Thus the computation of $H_*\pa{\ggm;\Z_2}$ reduces to the computation of the homology of $\gg$ with
twisted coefficients in the representation $H_*\pa{\bms;\Z_2}$. We will see that this $\gg$-representation
splits as a direct sum of symmetric powers of $H_1(\sg)$ with the symplectic action: this is done in
Theorem \ref{thm:Hbms*as*ggrep}, which together with Theorem \ref{thm:main} is the main result of the article.

The strategy of the proof does not generalize to fields of characteristic different from 2 or to the \emph{pure}
mapping class group, in which we consider only diffeomorphisms of $\sgm$ that fix all punctures. In Section \ref{sec:rational}
we describe in detail a counterexample with coefficients in $\Q$, which can be generalized both to coefficients
in a field $\F_p$ of odd characteristic and to the pure mapping class group.

I would like to thank my PhD advisor Carl-Friedrich B\"odigheimer for his precious suggestions and his continuous encouragement
during the preparation of this work.

\section{Preliminaries}
\label{sec:Preliminaries}
In the whole article $\Z_2$-coefficients for homology and cohomology will be understood,
unless explicitly stated otherwise.

In this section we recollect some classical definitions and results about braid groups and mapping class groups.

\begin{defn}
\label{defn:cms}
 The $m$-th \emph{ordered configuration space} of a surface $\sg$ is the space
\[
 F_m(\sg)=\set{(P_1,\dots,P_m) \in \pa{\mathring{\Sigma}_{g,1}}^{\times m}  \,|\,  P_i\neq P_j  \;\forall i\neq j}.
\]

 Note that we require the points of the configuration to lie in the interior of $\sg$; the space
 $F_m(\sg)$ is a smooth, orientable $2m$-dimensional manifold.
 
 The symmetric group $\mathfrak{S}_m$ acts freely on $F_m(\sg)$ by permuting the labels $1,\dots,m$ of a configuration;
 the orbit space
 \[
 F_m(\sg)/\mathfrak{S}_m
 \]
 is called the \emph{$m$-th unordered configuration space}
 of $\sg$ and is denoted by $C_m(\sg)$; this space is also a $2m$-dimensional orientable manifold
 (see Figure \ref{fig:unordered}).
 
\end{defn}

\begin{figure}[ht]\centering
\includegraphics[scale=0.8]{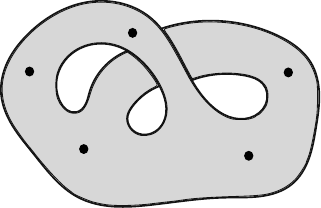}
\caption{A configuration in the space $C_5(\Sigma_{1,1})$}
\label{fig:unordered}
\end{figure}

A classical result by Fadell and Neuwirth (\cite{FadellNeuwirth}) ensures
that $C_m(\sg)$ is aspherical; the fundamental group $\pi_1(C_m(\sg))$ is
called the \emph{braid group on $m$ strands of $\sg$} and is denoted by $\bms$.

\begin{defn}
 \label{def:mcg}
 Let $\Diff(\sg;\partial\sg)$ be the group of diffeomorphisms of $\sg$ that fix $\partial\sg$ pointwise,
 endowed with the Whitney $C^{\infty}$-topology. We denote by $\gg=\pi_0(\Diff(\sg;\partial\sg))$ its group of connected
 components, which is called the \emph{mapping class group} of $\sg$.
 
 Similarly let $\Diff(\sgm;\partial\sgm)$ be the group of diffeomorphisms of $\sgm$ that fix $\partial(\sg)$
 pointwise and restrict to a permutation of the $m$ punctures: the \emph{mapping class group} of $\sgm$ is
 $\ggm=\pi_0(\Diff(\sgm;\partial\sgm))$.
 \end{defn}
 A classical result by Earle and Schatz \cite{EarleSchatz} ensures that the connected components
 of $\Diff(\sg;\partial\sg)$ are contractible: note that for $g=0$ and $g=1$ this result holds
 because we consider surfaces with non-empty boundary, and the bounday must be fixed pointwise
 by our diffeomorphisms. In particular $B\Diff(\sg;\partial\sg)\simeq B\gg$ is
 a classifying space for
 the group $\gg$, i.e. an Eilenberg-MacLane space of type $K(\gg,1)$.
 
 We call $\cF_{g,1}\to B\Diff(\sg;\partial\sg)$ the universal
 $\sg$-bundle
 \[
  \sg\to\cF_{g,1}=\sg\times_{\Diff(\sg;\partial\sg)}E\Diff(\sg;\partial\sg)\to B\Diff(\sg;\partial\sg).
 \]
Applying the construction of the $m$-th unordered configuration space fiberwise we obtain a bundle
$C_m(\cF_{g,1})\to B\Diff(\sg;\partial\sg)$ with fiber $C_m(\sg)$.

The space $C_m(\cF_{g,1})$ is a classifying space for the
group $\ggm$ and the Birman exact sequence \eqref{eq:Birman} is obtained by taking fundamental
groups of the aspherical spaces
\begin{equation}\label{eq:Birmanbundle}
C_m(\sg)\to C_m(\cF_{g,1})\to B\Diff(\sg;\partial\sg).
\end{equation}

In the whole article the genus $g\geq 0$ of the surfaces that we consider is supposed to be fixed,
and we will abbreviate $\S=\sg$.
We denote by $\D$ the open disc $\mathring{\Sigma}_{0,1}$.

It will be useful, for many constructions, to choose an embedding
$\D\hookrightarrow\mrS$ \emph{near} $\partial\S$ and to replace
$\Diff(\S;\partial\S)$ with its subgroup $\Diff(\S;\partial\S\cup\D)$ of diffeomorphisms
of $\S$ that fix pointwise both $\partial\S$ and $\D$. Saying that $\D$ is embedded
\emph{near $\partial\S$} means that there is a compact subsurface $\S'\subset\S$
such that $\S=\S'\natural\bar\D$ is the union along a segment of $\S'$ and the closure of $\D$ in $\S$.

From now on we suppose such an embedding to be fixed and we consider $\D$ as a subspace
of $\mrS$ (see Figure \ref{fig:SS'D}). In Section \ref{sec:HBraidSurf}, Definition \ref{defn:TS},
we will introduce a convenient model $\T(\S)$ for the space $\mrS$, and in Section \ref{sec:Actiongg},
Definition \ref{defn:DinTS}
we will specify an embedding $\D\hookrightarrow\mrS$ using the model $\T(\S)$.

\begin{figure}[ht]\centering
 \includegraphics[scale=0.8]{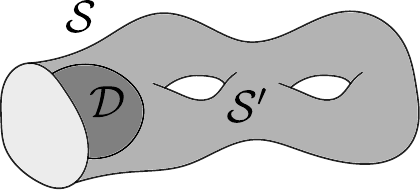}
 \caption{A splitting of $\S$ as $\S'\natural\bar\D$.}
\label{fig:SS'D}
\end{figure}

The construction in Definition \ref{defn:cms} can be specialised to the
surfaces $\D$ and $\S'$, yielding spaces $C_m(\D)$ and $C_m(\S')$ respectively.
We take configurations of points in the interior of the surfaces $\D$ and $\S'$ respectively
(since $\D$ is already an open surface this remark applies in particular to $\S'$).

The inclusion $\Diff(\S;\partial\S\cup\D)\subset\Diff(\S;\partial\S)$ is a homotopy
equivalence of topological groups, hence also the induced map
\[
B\Diff(\S;\partial\S\cup\D)\to B\Diff(\S;\partial\S)
\]
is a homotopy equivalence. We will replace the previous construction with the following,
homotopy equivalent ones.
\begin{defn}
\label{defn:universalSbundle}
We call $\cF_{\S,\D}\to B\Diff(\S;\partial\S\cup\D)$
the universal $\S$-bundle
 \[
  \S\to\cF_{\S,\D}\colon=\S\times_{\Diff(\S;\partial\S\cup\D)}E\Diff(\S;\partial\S\cup\D)\to B\Diff(\S;\partial\S\cup\D).
 \]
Note that $\Diff(\S;\partial\S\cup\D)$ acts on $\S'$ by restriction of diffeomorphisms, and acts trivially
on $\D$; therefore $\cF_{\S,\D}$ contains subspaces
\[
 \cF_{\S'}= \S'\times_{\Diff(\S;\partial\S\cup\D)}E\Diff(\S;\partial\S\cup\D)
\]
and $\D\times B\Diff(\S;\partial\S\cup\D)$; these subspaces fiber over $B\Diff(\S;\partial\S\cup\D)$
with fibers $\S'$ and $\D$ respectively.

Applying the construction of the $m$-th unordered configuration space fiberwise, we obtain spaces
$C_m(\cF_{\S,\D})$, $C_m(\cF_{\S'})$ and $C_m(\D)\times B\Diff(\S;\partial\S\cup\D)$,
all fibering over $B\Diff(\S;\partial\S\cup\D)$, respectively with fiber $\cms$,
$C_m(\S')$ and $C_m(\D)$.
\end{defn}
The fiber bundle \eqref{eq:Birmanbundle} corresponding to the Birman exact sequence \eqref{eq:Birman}
can now be replaced with the following one
\begin{equation}\label{eq:BirmanbundleD}
\cms\to C_m(\cF_{\S,\D})\to B\Diff(\S;\partial\S\cup\D).
\end{equation}

\begin{defn}
\label{defn:muF}
For all $0\leq p\leq m$ there is a map
\[
 \mu\colon C_p(\D)\times C_{m-p}(\S')\to \cms
\]
which takes the union of configurations (see Figure \ref{fig:defmu}):
 \[
  \mu\pa{\set{P_1,\dots,P_p};\set{P'_1,\dots,P'_{m-p}}}=\set{P_1,\dots,P_p,P'_1,\dots,P'_{m-p}}\in\cms.
 \]
\begin{figure}[ht]\centering
 \includegraphics[scale=0.7]{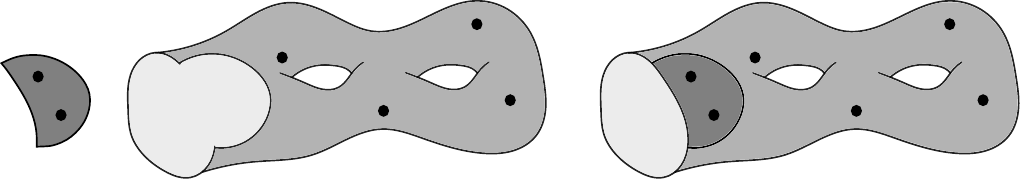}
 \caption{The product of two configurations in $C_2(\D)$ and $C_4(\S')$.}
\label{fig:defmu}
\end{figure}
 
We can apply this construction at the same time to each couple of fibers of the bundles $C_m(\cF_{\S,\D})$ and $C_{m-p}(\cF_{\S'})$
lying over the same point of $ B\Diff(\S;\partial\S\cup\D)$.

We obtain a map
\[
 \mu^{\cF}\colon C_p(\D)\times C_{m-p}(\cF_{\S'})\to C_m(\cF_{\S,\D}).
\]
If we see the domain of $\mu^{\cF}$ as a fibered product of bundles over $B\Diff(\S;\partial\S\cup\D)$
\[
 \pa{C_m(\D)\times B\Diff(\S;\partial\S\cup\D)}\times_{B\Diff(\S;\partial\S\cup\D)}C_{m-p}(\cF_{\S'}),
\]
then the map $\mu^{\cF}$ is also a map of bundles over $B\Diff(\S;\partial\S\cup\D)$, and the corresponding
map on fibers is precisely $\mu$.
\end{defn}

We recall now the structure of $H_*(C_m(\D))$.
The cohomology of $C_m(\D)$ with coefficients in $\Z_2$ was first computed by Fuchs in
\cite{Fuchs:CohomBraidModtwo}: in section
\ref{sec:HBraidSurf} we will generalise  Fuchs' argument to surfaces with boundary of positive genus.

In the appendix of \cite[Chap.III]{CLM}, Cohen considers the space $\coprod_{m\geq 0}C_m(\D)$ as an
algebra over the operad of little $2$-cubes, and describes its
$\Z_2$-homology as follows:
\begin{equation}
 \label{eq:Cohen}
H_*\pa{\coprod_{m\geq 0}C_m(\D)}\cong \Z_2\left[Q^j\epsilon \, |\, j\geq 0\right].
\end{equation}

Here $\epsilon\in H_0(C_1(\D))$ is the fundamental class, and for all
$k,m\geq 0$ we denote by $Q\colon H_k(C_m(\D))\to H_{2k+1}(C_{2m}(\D))$
the first Dyer-Lashof operation. In particular $Q^j\epsilon$ is the generator of
$H_{2^j-1}(C_{2^j}(\D))\simeq\Z_2$. See Figure \ref{fig:monomial}.

The isomorphism in equation \eqref{eq:Cohen} is
an isomorphism of bigraded rings. The left-hand side is a ring with the Pontryagin product,
and the right-hand side is a polynomial ring in infinitely many variables $\epsilon,Q\epsilon,Q^2\epsilon,\dots$.
The bigrading is given by the homological degree
$*$, that we call the \emph{degree},
and by the index $m$ of the connected component
on which the homology class is supported (informally, the number of points
involved in the construction of the homology class), that we call the \emph{weight}.

 \begin{figure}[ht]\centering
 \includegraphics[scale=0.7]{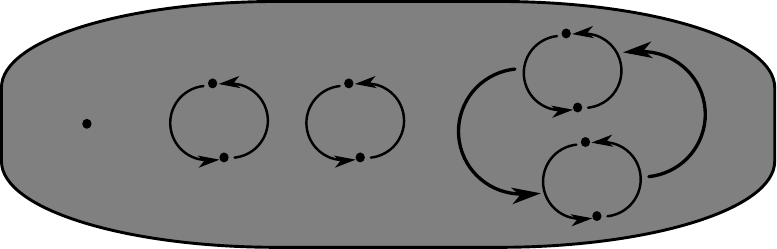}
 \caption{The class in $H_5(C_9(\D))$ corresponding to the monomial $\epsilon\cdot(Q\epsilon)^2\cdot Q^2\epsilon$.}
\label{fig:monomial}
\end{figure}

In this article we will only need the isomorphism in equation \eqref{eq:Cohen} to hold as
an isomorphism of bigraded $\Z_2$-vector spaces.
In particular for all choices of natural numbers $(\alpha_j)_{j\geq 0}$ with all but finitely
many $\alpha_j=0$, we will consider the monomial $\prod_j\pa{Q^j\epsilon}^{\alpha_j}$, corresponding to
a homology class in some bigrading $(*,m)$: we will only use the fact
that the set of these monomials is a bigraded basis for the
left-hand side of equation \eqref{eq:Cohen}.

We end this section recalling the classical notions of symmetric product and one-point compactification.

\begin{defn}
 \label{defn:SP}
 The $m$-fold symmetric product of a topological space $\mathcal{M}$, denoted by $\SP^m(\mathcal{M})$, is
 the quotient of $\mathcal{M}^m$ by the (non-free) action of the symmetric group $\mathfrak{S}_m$
 which permutes the coordinates.
\end{defn}
We regard a point of $\SP^m(\mathcal{M})$ as a configuration of $m$ points in $\mathcal{M}$ with multiplicities.

In the case of an open surface $\mrS$, it is known that $\SP^m(\mrS)$ is a non-compact manifold of dimension $2m$;
it contains $\cms$ as an open submanifold.

\begin{defn}
 \label{defn:opc}
For a space $\mathcal{M}$ we denote by $\mathcal{M}^{\infty}$ its one-point
compactification; the basepoint is the point at infinity, that we denote by $\infty$.
\end{defn}

We will consider in particular the one-point-compactification $\cms^{\infty}$ of $\cms$.

\section{Homology of configuration spaces of surfaces}
\label{sec:HBraidSurf}
We want to study $H_*(\bms)=H_*(\cms)$,
which appears as the homology of the fiber
in the Leray-Serre spectral sequence associated to
\eqref{eq:Birman}, \eqref{eq:Birmanbundle} or \eqref{eq:BirmanbundleD}.
In particular we are interested in $H_*(C_m(\S))$
as a $\Z_2$-representation of the group $\gg$.

In \cite{LM} L\"offler and Milgram implicitly proved that $H_*(\bms)=H_*(\cms)$ is a $\Z_2$-symplectic
representation of the mapping class group. By \emph{$\Z_2$-symplectic} we mean the following:
\begin{defn}
 \label{defn:symplrep}
 Let $\H=H_1(\S)\simeq\Z_2^{2g}$.
 The natural action of $\gg$ on $\H$ induces a surjective map
 $\gg\to Sp_{2g}(\Z_2)$. A representation of $\gg$ over $\Z_2$ is called \emph{$\Z_2$-symplectic}
 if it is a pull-back of a representation of $Sp_{2g}(\Z_2)$ along this map.
\end{defn}

In \cite{BCT} B\"odigheimer, Cohen and Taylor computed $H_*(\cms)$ \emph{as a graded $\Z_2$-vector space}.
Their method provides all Betti numbers, but the action of $\gg$ cannot be easily deduced:
their descripition of $H_*(\cms)$ depends on a handle decomposition of $\S$, which is not preserved
by diffeomorphisms of $\S$, not even up to isotopy.

In this section and in the next one we will prove the following theorem; to the best of the author's knowledge
it does not appear in the literature.
\begin{thm}
 \label{thm:Hbms*as*ggrep}
 There is an isomorphism of bigraded $\Z_2$-representations of $\gg$
 \[
  \bigoplus_{m\geq 0} H_*\pa{\cms}\cong \Z_2\left[Q^j\epsilon\,|\, j\geq 0\right]\otimes\Sym_{\bullet}(\H).
 \]
 Here we mean the following:
 \begin{itemize}
  \item[(i)] on the left-hand side the bigrading is given by homological degree $*$ and by the direct summand,
  indexed by $m$,
  on which the homology class is supported, i.e. by the number $m$ of points
  involved in constructing the homology class; we call $*$ the \emph{degree} and $m$ the \emph{weight},
  and write $(*,m)$ for the bigrading;
  \item[(ii)] for $j\geq 0$, $Q^j\epsilon$ is the image in $H_{2^j-1}(C_{2^j}(\S))$ of a generator
  of the group $H_{2^j-1}(C_{2^j}(\D))\simeq \Z_2$ 
  under the natural map induced by the embedding $\D\hookrightarrow\mrS$,
  and $\Z_2\left[Q^j\epsilon\,|\, j\geq 0\right]$ is the polynomial ring on
  infinitely many variables $\epsilon,Q\epsilon,Q^2\epsilon,\dots$;
  \item[(iii)] $\H=H_1(\sg)$ is identified with $H_1(C_1(\S))$ in a natural way, and lives in degree $1$ and weight $1$;
  $\Sym_{\bullet}(\H)$ denotes the symmetric algebra on $\H$;
  \item[(iv)] degrees and weights are extended on the right-hand side by the usual multiplicativity rule;
  \item[(v)] the action of $\gg$ on the right is the tensor product of the trivial action
  on the factor $\Z_2[Q^j\epsilon\,|\, j\geq 0]$, and of the action on
  $\Sym_{\bullet}(\H)$ which is induced by the $\Z_2$-symplectic action on $\H$.
  \end{itemize}
\end{thm}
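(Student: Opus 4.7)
The plan is to construct an explicit bigraded map
\[
\Phi\colon \Z_2[Q^j\epsilon\mid j\geq 0]\otimes\Sym_\bullet(\H) \longrightarrow \bigoplus_{m\geq 0} H_*(\cms),
\]
verify that it is $\gg$-equivariant, and then show it is an isomorphism via a Betti number count. On generators, $\Phi$ is defined by part (ii) of the statement: each $Q^j\epsilon$ is sent to the image of the Dyer--Lashof class under the map $H_*(C_{2^j}(\D))\to H_*(C_{2^j}(\S))$ induced by $\D\hookrightarrow\mrS$, and $\H$ is identified with $H_1(C_1(\S))$ in the canonical way. The multiplicative structure is built using the maps $\mu$ of Definition \ref{defn:muF}: the Pontryagin-type pairing $\mu_*\colon H_*(C_p(\D))\otimes H_*(C_q(\S'))\to H_*(C_{p+q}(\S))$ combines a $\D$-supported monomial in the $Q^j\epsilon$ with an $\S'$-supported monomial in $\Sym_\bullet(\H)$. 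The latter is realised by fixing a symplectic basis of $\H$ represented by $2g$ pairwise disjoint embedded loops in $\mathring{\S}'$, and taking disjoint-configuration products of these loops inside $\bigoplus_q H_*(C_q(\S'))$.

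Equivariance is then largely built into the construction. Passing to the homotopy-equivalent subgroup $\Diff(\S;\partial\S\cup\D)$, each diffeomorphism fixes $\D$ pointwise, so the action on the image of $H_*(C_p(\D))$ inside $H_*(\cms)$ is trivial and every $Q^j\epsilon$ is $\gg$-invariant. The induced $\gg$-action on $\H=H_1(\S)$ is the standard $\Z_2$-symplectic action, which gives the claimed action on $\Sym_\bullet(\H)$. Finally, because $\mu^{\cF}$ is a map of bundles over $B\Diff(\S;\partial\S\cup\D)$, the Pontryagin pairing commutes with the $\gg$-action on homology; hence $\Phi$ intertwines the trivial action on the polynomial factor and the symplectic action on $\Sym_\bullet(\H)$ with the natural action on the target.

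The hard part is showing that $\Phi$ is a bijection in each bidegree. The plan is to generalise Fuchs' cellular computation of $H_*(C_m(\D))$ (the argument underlying Cohen's formula \eqref{eq:Cohen}) to $\cms$. Concretely, one filters $\cms$ by the number of points of the configuration that lie in $\D$; the associated spectral sequence has an $E^1$-page assembled from Cohen's formula on the disk part and from the symmetric-algebra description of $\bigoplus_q H_*(C_q(\S'))$ supplied by the disjoint-loop construction. A direct dimension count against the Poincar\'e series computed by B\"odigheimer--Cohen--Taylor in \cite{BCT} then forces $\Phi$ to be surjective in each bidegree, and hence an isomorphism. The principal obstacle is controlling this spectral sequence: one must check that the expected generators hit a basis in every bidegree, that the $\Sym_\bullet(\H)$-subalgebra is genuinely polynomial inside $\bigoplus_q H_*(C_q(\S'))$, and that no hidden differentials or unexpected relations among $\H$-monomials interfere. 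This is where I expect the bulk of the technical work of Section \ref{sec:HBraidSurf} and the subsequent section to be concentrated.
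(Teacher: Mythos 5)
Your plan breaks down at its central geometric construction: you propose to realise $\Sym_\bullet(\H)$ inside $\bigoplus_q H_*\pa{C_q(\S')}$ by fixing ``a symplectic basis of $\H$ represented by $2g$ pairwise disjoint embedded loops'' and taking disjoint-configuration products. No such basis exists. The mod-$2$ intersection form on $\H$ is non-degenerate, so for each symplectic pair $\u_i,\v_i$ any two representing curves intersect transversely an odd number of times; in particular they cannot be disjoint. Consequently the torus $c_{\u_i}\times c_{\v_i}$ does not live in $C_2(\S')$ at all --- it sits in $\SP^2(\mrS')$ with a genuine diagonal intersection --- and your ``polynomial subalgebra'' does not exist as a collection of configuration tori. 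This is not a peripheral issue: the paper devotes the entire second subsection of Section \ref{sec:Actiongg} to precisely this obstruction. It works with multicurves $\cC^m$ whose constituent curves are only required to intersect \emph{inside} the fixed disc $\D$ (Definition \ref{defn:cCm}), passes through the auxiliary partial compactification $\cmsD\subset\SP^m(\mrS)$ where coincidences in $\D$ are allowed, proves injectivity of $H_m(\cms)\to H_m(\cmsD)$ (Lemma \ref{lem:cms->cmsDinj}), and then shows by induction --- via the ramified-double-cover / Thom-class argument in diagram \eqref{eq:fivediagram}, where multiplication by $2$ vanishes mod $2$ --- that the multicurve classes lift back from $H_m(\cmsD)$ to $H_m(\cms)$ (Lemma \ref{lem:tpsiwithproperties}). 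Finally it must prove that the resulting map $\tpsi_m$ factors through $\pr_m$ so that it descends to $\Sym_m(\H)$ (Lemma \ref{lem:tpsi->psi}). None of this is present in, or implied by, your outline.

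Two further, smaller, caveats. First, even once one has candidate classes, equivariance is not ``built in'': the map $\tpsi_m$ is equivariant because its source $\ZcC{m}$ carries a $\gg$-action, but to get a map out of $\Sym_m(\H)$ one must prove the factorisation through $\pr_m$, which is a computation (the intersection-with-$e^{\tup}$ argument), not a formality. Second, your plan to finish by a Betti-number count against B\"odigheimer--Cohen--Taylor would establish at most a non-equivariant isomorphism; the paper explicitly notes that the BCT description depends on a handle decomposition not preserved by $\gg$, which is why a separate equivariant argument (Lemma \ref{lem:oplussplitting}, using the filtration by norm and the upper-triangular intersection matrix against generalised symmetric chains) is required for the bigradings $(m-l,m)$ with $l>0$. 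The filtration you gesture at (``by the number of points in $\D$'') is in the right spirit, but the actual filtration used is on the reduced cellular chain complex of $\cms^\infty$ by the norm $\sum x_i$, and the absence of hidden differentials is proved, not assumed.
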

Note that for any bi-homogeneus element in the right-hand side, the weight is greater or equal than
the degree: indeed factors of the form $Q^j\epsilon$ have weight strictly higher than their degree,
whereas factors belonging to $\H$ or to its symmetric powers have equal weight and degree.
  
Note that in the case $g=0$ the group $\Gamma_{0,1}$ is trivial and the previous theorem
reduces to equation \eqref{eq:Cohen}.

We point out that in \cite{BoT} B\"{o}digheimer and Tillmann have essentially proved that for a field
$\mathbb{F}$ of any characteristic the $\mathbb{F}$-vector space
\[
  \bigoplus_{m\geq 0} H_*\pa{\cms;\mathbb{F}}
\]
is isomorphic, as a bigraded $\gg$-representation over $\mathbb{F}$, to the tensor product
of the ring $\mathbb{F}[\epsilon]$, with trivial action, and some other bigraded representation:
here $\epsilon$ denotes, in analogy with the notation of Theorem \ref{thm:Hbms*as*ggrep}, the standard generator of $H_0(C_1(\D))$.
In Section \ref{sec:comparison} we will compare in detail Theorems \ref{thm:Hbms*as*ggrep}
and \ref{thm:main} with the results in \cite{BoT}.

In this section we will prove that there is an isomorphism of \emph{bigraded $\Z_2$-vector spaces}
as in Theorem \ref{thm:Hbms*as*ggrep}; in the next section we will deal with the action of
$\gg$.

Since we work with coefficients in a field, it is equivalent to compute homology or cohomology,
and in this section we will prefer to compute $H^*(\cms)$ for all bigradings $(*,m)$.

We will mimic the method used by Fuchs \cite{Fuchs:CohomBraidModtwo} to compute the $\Z_2$-cohomology
of $C_m(\D)$.
As already mentioned in Section \ref{sec:Preliminaries}, our computation recovers a known result, but it has the advantage of
being quite elementary and of providing a part of
the geometric insight that we will need in the next section.

In the whole section we assume $m\geq 0$ to be fixed.
Since the space $\cms$ is homeomorphic to the interior of a compact
$2m$-manifold with boundary, by Poincaré-Lefschetz
duality we have
\[
 H^*(\cms)\simeq \tilde H_{2m-*}(\cms^{\infty}),
\]
where in the right hand side we consider reduced homology of the one-point compactification (see Definition \ref{defn:opc}).

We introduce a space $\T(\S)$ which is homeomorphic to $\mrS$, the interior of $\S$. The construction
corresponds to a handle decomposition of $\S$ with one $0$-handle and $2g$ $1$-handles.

\begin{defn}
\label{defn:TS}
If $g=0$, hence $\S=\Sigma_{0,1}$ is the disc, we set $\T(\S)=]0,1[^2$, the interior of the unit square. Assume
now $g\geq 1$, and see Figure \ref{fig:defTS} to visualize the following construction.

Dissect the interval $[0,1]$ into $2g$ equal subintervals through the points $\cP_i=\frac{i}{2g}$ for $0\leq i\leq 2g$
(for $i=0,2g$ we get the two endpoints of $[0,1]$).

Consider on the vertical sides of $[0,1]^2$ the intervals
$I_i^l=\set{0}\times [\cP_i,\cP_{i+1}]$ and $I_i^r=\set{1}\times [\cP_i,\cP_{i+1}]$ for $1\leq i\leq 2g$:
all these intervals are
canonically diffeomorphic
to $[0,1]$ by projecting on the second coordinate, rescaling linearly by a factor $2g$
and translating.

We define a bijection
between the two sets of left and right intervals:
for $1\leq i\leq g$, the interval $I^l_{2i-1}$ corresponds to $I^r_{2i}$,
and the interval $I^r_{2i-1}$ corresponds $I^l_{2i}$.

The space $Q(\S)$ is the quotient of the square $[0,1]^2$ obtained by identifying each couple
of corresponding intervals in the canonical way. For $1\leq i\leq g$ we call $\overline{\U}_i$ the image
of $I^l_{2i-1}$ in the quotient $Q(\S)$, and we call $\overline{\V}_i$ the image of $I^l_{2g}$ in $Q(\S)$.

Note that the image in $Q(\S)$ of the set $\set{0,1}\times\set{\cP_0,\dots,\cP_{2g}}$ consists of two
points $\cP_{odd}$ and $\cP_{even}$: for $\epsilon\in\set{0,1}$ and $0\leq i\leq 2g$ the point
$(\epsilon,\cP_i)$ is mapped to $\cP_{even}$ if $\epsilon+i$ is even, and is mapped to $\cP_{odd}$
otherwise.

The spaces $\overline{\U}_i\subset Q(\S)$ and $\overline{\V}_i\subset Q(\S)$ are intervals with endpoints $\cP_{even}$ and $\cP_{odd}$;
the interiors of these intervals are disjoint. Each $\overline{\U}_i$ and $\overline{\V}_i$ is the homeomorphic image of
some left interval $I^l_j$, and inherits from the latter a parametrisation by $[0,1]$.

We call $\U_i$ and $\V_i$ the interiors of the intervals $\overline{\U}_i$ and $\overline{\V}_i$ respectively.

The space $Q(\S)$ is homeomorphic to the compact surface $\S$; we call $\T(\S)$ the interior of
$Q(\S)$.

From now on we will identify $\S$ with $Q(\S)$ and $\mrS$ with $\T(\S)$;
consequently we will identify $\cms$ with the
space of configurations of $m$ points in $\T(\S)$.
\end{defn}

\begin{figure}[ht]\centering
 \includegraphics[scale=0.7]{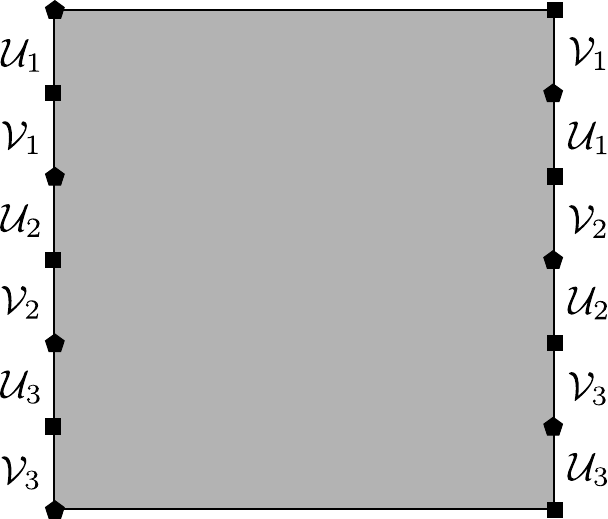}
 \caption{The space $Q(\S)$ for $\S=\Sigma_{3,1}$. The black pentagons represent the point $\cP_{even}$, whereas the black
 squares represent $\cP_{odd}$.}
\label{fig:defTS}
\end{figure}

Our next aim is to define a structure of $CW$-complex on the space $\cms^{\infty}$:
the only $0$-cell will be the point $\infty$, whereas the other cells will be introduced
in the following definition.
\begin{defn}
\label{defn:ehopen}
A \emph{tuple} $\tup$ is a choice of the following set of data:
 \begin{itemize}
  \item a natural number $0\leq l\leq m$;
  \item a vector $\ux=(x_1,\dots, x_l)$ of integers $\geq 1$;
  \item vectors $\uu=(u_1,\dots,u_g)$ and $\uv=(v_1,\dots,v_g)$ of integers $\geq 0$,
 \end{itemize}
satisfying the following equality
\[
 m=\sum_{i=1}^lx_i+\sum_{i=1}^g(u_i+v_i).
\]
In symbols we write $\tup=(l,\ux,\uu,\uv)$. We omit from the notation $\ux$ if it vanishes: this happens precisely when $l=0$.
The \emph{dimension} of $\tup$ is defined as $m+l$.

For a tuple $\tup$ let $e^{\tup}$ be the subspace
of $\cms$ of configurations of $m$ points in $\mrS$ such that the following conditions hold
(see picture \ref{fig:defetup}):
\begin{itemize}
 \item for all $1\leq i\leq g$, exactly $u_i$ points lie on $\U_i$
 and exactly $v_i$ points lie on $\V_i$;
 \item there are exactly $l$ vertical lines in the open square $]0,1[^2\subset\mrS$ of the
 form $\set{s_i}\times]0,1[$ for some $0<s_1<\dots<s_l<1$, containing at least one
 point of the configuration. From left to right, these lines contain exactly $x_1,\dots,x_l$ points
 respectively.
\end{itemize}
\end{defn}

\begin{figure}[ht]\centering
 \includegraphics[scale=0.7]{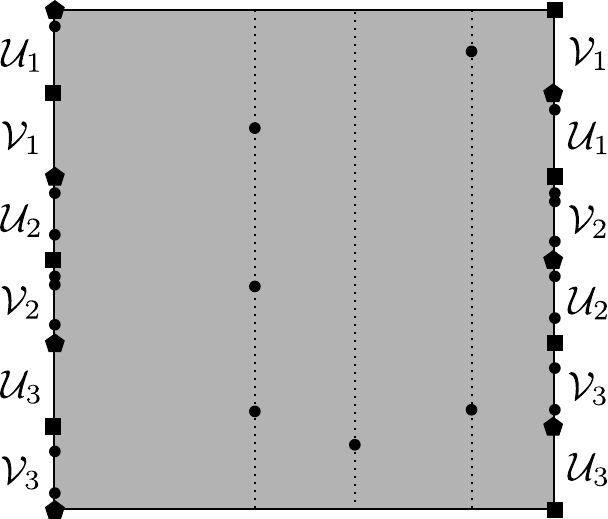}
 \caption{A configuration in the subspace $e^{\tup}\subset C_{14}(\S)$, for the tuple $\tup=(3,(3,1,2),(1,2,0),(0,3,2))$. Points
 lying on the $\U_i$'s and the $\V_i$'s are depicted twice.}
\label{fig:defetup}
\end{figure}

The space $e^{\tup}$ is homeomorphic to \emph{the interior} of the following multisimplex:

\[
 \Delta^{\tup}\colon =
 \Delta^l\times\prod_{i=1}^l\Delta^{x_i}\times\prod_{i=1}^g\pa{\Delta^{u_i}\times\Delta^{v_i}},
\]
where the simplex $\Delta^r$ is the subspace of $[0,1]^r$ of sequences $0\leq \tau_1\leq\dots\leq\tau_r\leq 1$
(the numbers $\tau_1,\dots,\tau_r$ are the \emph{local coordinates} of the simplex). The homeomorphism is given
as follows:
\begin{itemize}
 \item the local coordinates of the $\Delta^l$-factor correspond to the positions $s_1,\dots,s_l$ of the vertical
 lines in $(0,1)^2$ containing points of the configuration;
 \item the local coordinates of the $\Delta^{x_i}$-factor correspond to the positions of the $x_i$ points
 lying on the vertical line $\set{s_i}\times(0,1)$;
 \item the local coordinates of the $\Delta^{u_i}$-factor correspond to the positions of the $u_i$ points
 lying on $\U_i$, which is canonically identified with $(0,1)$; similarly for the $\Delta^{v_i}$-factor,
 with $v_i$ and $\V_i$ replacing $u_i$ and $\U_i$.
\end{itemize}
Note that the dimension of $\Delta^{\tup}$ agrees with the dimension of $\tup$ from Definition \ref{defn:ehopen}.
The embedding $\mathring{\Delta}^{\tup}\cong e^{\tup}\hookrightarrow \cms^{\infty}$ extends to a continuous map
$\Phi^{\tup}\colon\Delta^{\tup}\to\cms^{\infty}$, so that the image of $\partial\Delta^{\tup}$ is contained in the union of
all subspaces $e^{\tup'}$ for tuples $\tup'$ of \emph{lower} dimension than $\tup$, together with the $0$-cell $\infty$.

The construction of the map $\Phi^{\tup}$ is as follows:
\begin{enumerate}
\item we identify the one-point compactification
$(\mrS)^{\infty}$ of $\mrS=\T(\S)$ as the quotient of $Q(\S)$ collapsing the boundary to a point $\infty$;
\item we consider the $m$-fold symmetric product $\SP^m\pa{(\mrS)^{\infty}}$ (see Definition \ref{defn:SP}): it contains 
as an \emph{open} subspace $\cms$, so we can identify $\cms^{\infty}$ as the quotient of $\SP^m\pa{(\mrS)^{\infty}}$ collapsing
the subspace $\SP^m\pa{(\mrS)^{\infty}}\setminus\cms$ to $\infty$;
\item the homeomorphism $\mathring{\Delta}^{\tup}\to e^{\tup}\subset\cms$ extends
now to a map $\Delta^{\tup}\to\SP^m\pa{[0,1]^2}$, that we can then further project to $\SP^m(Q(\S))$, then to
$ \SP^m\pa{(\mrS)^{\infty}}$ and then to $\cms^{\infty}$: the composition
is the map $\Phi^{\tup}$.
\end{enumerate}
In the last step we used the open inclusion $\cms\subset\SP^m\pa{\mrS}\subset\SP^m\pa{(\mrS)^{\infty}}$, which
induces a map $ \SP^m\pa{(\mrS)^{\infty}}\to\cms^{\infty}$ by quotienting to $\infty$ the closed subspace
$ \SP^m\pa{(\mrS)^{\infty}}\setminus \cms^{\infty}$.

We conclude that the collection of the $e^{\tup}$'s, together with the $0$-cell $\infty$,
gives a cell decomposition of $\cms^{\infty}$, with characteristic maps of cells $\Phi^{\tup}$.

We compute the \emph{reduced} cellular chain complex of $\cms^{\infty}$
with coefficients in $\Z_2$. It is a chain complex in $\Z_2$-vector spaces with basis given
by tuples $\tup$, which correspond to the cells $e^{\tup}$.
\begin{lem}
\label{lem:doperatoropenmodtwo}
Let $\tup=(l,\ux,\uu, \uv)$ and
$\tup'=(l-1,\ux',\uu',\uv')$
be tuples in consecutive dimensions $m+l$ and $m+l-1$. Denote by $[\tup\colon\tup']\in\Z_2$ the coefficient
of $\tup'$ in $\partial \tup$ in the reduced chain complex $\tilde\Ch_*(\cms^{\infty})$.
Then $[\tup:\tup']=0$ unless one, and exactly one, of the following situations occurs:
\begin{itemize}
 \item $l\geq 2$ and $\tup'$ is obtained from $\tup$ by decreasing $l$ by 1, setting $x'_i=x_i+x_{i+1}$
 for one value $1\leq i\leq l-1$, and shifting the values
 $x'_j=x_{j+1}$ for $i+1\leq j\leq l-1$. In this case we say that $\tup'$ is an \emph{inner boundary} of $\tup$, and
 we have
 \[
  [\partial \tup\colon\tup']=\binom{x_i+x_{i+1}}{x_i}\in\Z_2,
 \]
 where $\binom{x_i+x_{i+1}}{x_i}$ denotes the binomial coefficient.
 \item $l\geq 1$ and $\tup'$ is obtained from $\tup$ by decreasing $l$ by 1, choosing a splitting of $x_1$
 in integers $\tildeu_i,\tildev_i\geq 0$
 \[
  x_1=\sum_{i=1}^g \pa{\tildeu_i+\tildev_i},
 \]
 setting $u'_i=u_i+\tildeu_i$ and $v'_i=v_i+\tildev_i$ for all $1\leq i\leq g$ and shifting the values
 $x'_j=x_{j+1}$ for $1\leq j\leq l-1$. In this case we say that $\tup'$ is a \emph{left, outer boundary} of $\tup$, and we have
 \[
  [\partial \tup\colon\tup']=\prod_{i=1}^g\binom{u_i+\tildeu_i}{u_i}\binom{v_i+\tildev_i}{v_i}\in\Z_2.
 \]
 \item $l\geq 1$ and $\tup'$ is obtained from $\tup$ by decreasing $l$ by 1, choosing a splitting of $x_l$
 in integers $\tildeu_i,\tildev_i\geq 0$
 \[
  x_l=\sum_{i=1}^g \pa{\tildeu_i+\tildev_i},
 \]
 setting $u'_i=u_i+\tildeu_i$ and $v'_i=v_i+\tildev_i$ for all $1\leq i\leq g$ and keeping $x'_i=x_i$ for all $1\leq i\leq l-1$.
 In this case we say that $\tup'$ is a \emph{right, outer boundary} of $\tup$, and we have
 \[
  [\partial \tup\colon\tup']=\prod_{i=1}^g\binom{u_i+\tildeu_i}{u_i}\binom{v_i+\tildev_i}{v_i}\in\Z_2.
 \]
\end{itemize}

\end{lem}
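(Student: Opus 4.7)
The proof inspects every codimension-one face of the multisimplex $\Delta^{\tup} = \Delta^l \times \prod_i \Delta^{x_i} \times \prod_i (\Delta^{u_i} \times \Delta^{v_i})$ and computes, mod $2$, the local degree of the restriction of $\Phi^{\tup}$ onto each candidate $(m+l-1)$-dimensional cell $e^{\tup'}$. Such a face comes from a codimension-one face of a single factor $\Delta^r = \set{0 \le \tau_1 \le \dots \le \tau_r \le 1}$, either \emph{internal} ($\tau_j = \tau_{j+1}$) or \emph{extremal} ($\tau_1 = 0$ or $\tau_r = 1$). First I would rule out the faces that cannot hit the interior of any $(m+l-1)$-cell: internal faces of $\Delta^{x_i}$, $\Delta^{u_i}$ and $\Delta^{v_i}$ produce colliding configuration points and map into $\SP^m((\mrS)^{\infty}) \setminus \cms$, collapsed to $\infty$; extremal faces of $\Delta^{x_i}$ send a point to the top or bottom of the unit square, which lies in $\partial\S$ and is also collapsed to $\infty$ in $(\mrS)^{\infty}$; extremal faces of $\Delta^{u_i}$ or $\Delta^{v_i}$ move a point along $\U_i$ or $\V_i$ to one of the endpoints $\cP_{even}$, $\cP_{odd}$, and the resulting configuration lies in $\cms$ but in a stratum of codimension at least $2$, disjoint from every open cell of dimension $m+l-1$.

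The remaining contributions all come from faces of $\Delta^l$. The internal face $s_i = s_{i+1}$ merges two adjacent vertical lines, generically yielding $x_i + x_{i+1}$ distinct points on the merged line and landing in the inner-boundary cell described in the statement; restricted to this face, $\Phi^{\tup}$ factors through the shuffle map $\Delta^{x_i} \times \Delta^{x_{i+1}} \to \Delta^{x_i+x_{i+1}}$ which sorts two monotone height sequences into one, and such a shuffle map has mod-$2$ degree $\binom{x_i+x_{i+1}}{x_i}$. For the extremal face $s_1 = 0$, the leftmost vertical line collapses onto the left edge of $[0,1]^2$, which by Definition \ref{defn:TS} decomposes into $I^l_1, \ldots, I^l_{2g}$, identified from bottom to top with $\overline{\U}_1, \overline{\V}_1, \ldots, \overline{\U}_g, \overline{\V}_g$. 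A splitting $x_1 = \sum_i (\tildeu_i + \tildev_i)$ records the generic distribution of the $x_1$ heights among the $2g$ subintervals $[\cP_{j-1}, \cP_j]$; the corresponding sub-face of $\Delta^{x_1}$ maps into the left-outer-boundary cell with those parameters, and on each $\U_i$ (resp.\ $\V_i$) the map $\Phi^{\tup}$ merges the $\tildeu_i$ incoming points with the $u_i$ already present via a shuffle of degree $\binom{u_i+\tildeu_i}{u_i}$ (resp.\ $\binom{v_i+\tildev_i}{v_i}$), yielding the stated product. The face $s_l = 1$ is handled identically via the right-edge intervals $I^r_j$, which form $\overline{\V}_1, \overline{\U}_1, \ldots, \overline{\V}_g, \overline{\U}_g$ from bottom to top.

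The main difficulty is the bookkeeping in the extremal cases: one must verify that the subdivision of $\Delta^{x_1}$ (resp.\ $\Delta^{x_l}$) according to the splittings of $x_1$ (resp.\ $x_l$) matches the cell stratification on $\cms^{\infty}$, and check that the orientation-reversing gluings $I^l_j \leftrightarrow I^r_{j'}$ needed to make $\S$ orientable do not interfere with the combinatorics. Both issues disappear because we work mod $2$: orientations become irrelevant, and the degrees of all the shuffle maps reduce to the stated binomial coefficients.
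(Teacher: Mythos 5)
Your proof follows the same strategy as the paper: classify the codimension-one faces of the multisimplex $\Delta^{\tup}$, show that only faces of the $\Delta^l$-factor (the paper's ``horizontal'' faces) contribute, and compute the local index of $\Phi^{\tup}$ over $e^{\tup'}$ on each such face. Your explicit identification of the binomial coefficients as degrees of shuffle maps is a useful amplification of the paper's terser assertion that each horizontal face hits $e^{\tup'}$ exactly the stated number of times.

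There are, however, a few slips. The extremal faces of $\Delta^{u_i}$ and $\Delta^{v_i}$ do \emph{not} land in a codimension-$\geq 2$ stratum inside $\cms$: the endpoints $\cP_{even}$, $\cP_{odd}$ of $\overline{\U}_i$, $\overline{\V}_i$ lie on $\partial Q(\S)$ (they are images of the four corners of $[0,1]^2$), so a configuration containing one of them is sent to $\infty$ in $(\mrS)^{\infty}$ and the entire face maps constantly to $\infty$ in $\cms^{\infty}$ --- matching the paper's uniform claim that every vertical face maps to $\infty$. The conclusion (zero contribution) is the same, but the stated reason is wrong. Also, the identifications $I^l_j\leftrightarrow I^r_{j'}$ are made ``in the canonical way'' and are orientation-\emph{preserving}, so there is no orientation-reversal for mod-$2$ coefficients to rescue you from. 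More substantively, you never address the clause ``one, \emph{and exactly one}'' in the statement: when all $x_i$ are equal, the same $\tup'$ arises as both a left and a right outer boundary with identical contributions, which cancel in $\Z_2$ to give $[\tup\colon\tup']=0$; your proof records the per-face local indices but does not assemble them into the total coefficient nor check that this is the only coincidence among the horizontal faces. The paper makes this cancellation explicit in its final paragraph, and it is needed to justify the lemma as stated.
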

It may indeed happen that $\tup'$ is both a left and a right outer boundary of $\tup$,
namely when all numbers $(x_i)_{1\leq i\leq l}$ are equal; then the two contributions cancel each other,
so that $[\tup\colon\tup']=0$ as stated.
\begin{proof}
 For $1\leq i\leq l$ denote by $\partial^{\hor}_i\Delta^{\tup}$ the face
 \[
 \partial_i\Delta^l\times\prod_{i=1}^l\Delta^{x_i}\times\prod_{i=1}^g\pa{\Delta^{u_i}\times\Delta^{v_i}}\subset\Delta^{\tup}.
 \]
 This is also referred as the $i$-th \emph{horizontal face}. All other faces of codimension 1 of
 the multisimplex $\Delta^{\tup}$ are called \emph{vertical}.
 
 We note that $\Phi^{\tup}$ restricts to a cellular map $\partial_i^{\hor}\Delta^{\tup}\to\cms^{\infty}$ on every
 horizontal face, where $\partial_i^{\hor}\Delta^{\tup}$
 is given the cell structure coming from the multisimplicial structure: every subface of $\partial_i^{\hor}\Delta^{\tup}$ of dimension
 $k\leq m+l-1$ is mapped to the $k$-skeleton of $\cms^{\infty}$. Therefore the map $\Phi^{\tup}\colon\partial_i^{\hor}\Delta^{\tup}\to\cms^{\infty}$
 has a well-defined local index over the cell $e^{\tup'}$, that
 we call $[\partial^{\hor}_i\tup\colon\tup']\in\Z_2$.

 The same holds for vertical faces, where the restriction of $\Phi^{\tup}$ is the constant map to $\infty$: in this case the local index
 over $e^{\tup'}$ is zero. The index $[\tup\colon\tup']$ of the map $\Phi^{\tup}\colon\partial\Delta^{\tup}\to\cms^{\infty}$ on
 $e^{\tup'}$ splits as a sum of local indices:
 \[
  [\tup\colon\tup']=\sum_{i=0}^l[\partial_i^{\hor}\tup\colon\tup']\in\Z_2.
 \]
 For $0\leq i\leq l$ the restriction $\Phi^{\tup}\colon\partial^{\hor}_i\Delta^{\tup}\to\cms^{\infty}$
 hits homeomorphically the open cell $e^{\tup'}$
 exactly as many times as specified in the statement
 of the lemma for the cases $1\leq i\leq l-1$,
 $i=0$ and $i=l$ respectively.
 
 The only possibility in which the same cell $e^{\tup'}$ is hit by different faces $\partial_i^{\hor}\Delta^{\tup}$
 and $\partial_j^{\hor}\Delta^{\tup}$ is the one described in the remark preceding this proof: in this case
 there are two equal contributions $[\partial_i^{\hor}\tup\colon\tup']$ and $[\partial_j^{\hor}\tup\colon\tup']$
 that cancel each other.
\end{proof}

We can filter the reduced chain complex $\tilde\Ch_*(\cms^{\infty})$ by giving filtration norm $\sum_{i=1}^lx_i$ to
the tuple $\tup=(l,\ux,\uu,\uv)$, with $\ux=(x_1,\dots,x_l)$. For example
the tuple in Figure \ref{fig:defetup} has norm 6.

By Lemma \ref{lem:doperatoropenmodtwo}
the norm is weakly decreasing along differentials. Denote by $F_p\subset\tCh_*(\cms^{\infty})$ the subcomplex generated by
tuples of norm $\leq p$, and let $F_p/F_{p-1}$ be the $p$-th filtration stratum.
Then $F_p/F_{p-1}$ is isomorphic, as a chain complex, to a direct sum of copies of $\tCh_*(C_p(\Sigma_{0,1})^{\infty})$:
there is one copy for each partition
\[
(m-p)=\sum_{i=1}^g (u_i+v_i)
\]
with $u_i,v_i\geq 0$. The isomorphism
does not preserve the degrees but shifts them by $p$.

Indeed in $F_p/F_{p-1}$ all outer
differentials vanish (see Lemma \ref{lem:doperatoropenmodtwo}): in particular the numbers $u_i,v_i$
do not change along the differentials of $F_p/F_{p-1}$. Therefore $F_p/F_{p-1}$ splits as a direct sum
of chain complexes indexed by all partitions $(m-p)=\sum_{i=1}^g (u_i+v_i)$ as above.
It is then immediate to identify the inner faces
with the ones one would have in the case $g=0$, i.e. for the surface $\Sigma_{0,1}$, after shifting
degrees by $p$.

We note that $\tCh_*(C_p(\Sigma_{0,1})^{\infty})$ is exactly the chain complex described by Fuchs in
\cite{Fuchs:CohomBraidModtwo}: we recall Fuchs' computation of the cohomology of configuration
spaces of the disc, and abbreviate $\D$ for $\mathring{\Sigma}_{0,1}$ as in Section \ref{sec:Preliminaries}.
\begin{defn}
\label{defn:symchain}
Consider a partition of $p$ into powers of 2
\[
 p=\sum_{j\geq 0}\alpha_j2^j,
\]
and let $\ualpha=(\alpha_j)_{j\geq 0}$ be the sequence of multiplicities. We understand that only finitely
many $\alpha_j$'s are strictly positive.

The associated \emph{symmetric chain} in $\tCh_*(C_p(\D)^{\infty})$, denoted by $\kappa(\ualpha)$,
is the sum of all tuples $\tup=\pa{l,(x_i)_{i\leq l}}$ such that
\begin{itemize}
 \item $l=\sum_{j\geq 0}\alpha_j$;
 \item every $x_i$ is a power of 2;
 \item for all $j\geq 0$ there are exactly $\alpha_j$ indices $i$ such that $x_i=2^j$.
\end{itemize}
A symmetric chain $\kappa(\ualpha)$ is a cycle in the chain complex $\tCh_*(C_p(\D)^{\infty})$ (see \cite{Fuchs:CohomBraidModtwo}).
We denote by $[\kappa(\ualpha)]\in\tH_{p+l}(C_p(\D)^{\infty})$ the associated homology class.
\end{defn}
In \cite{Fuchs:CohomBraidModtwo} Fuchs shows that a graded basis for $\tH_*(C_p(\D)^{\infty})$
is given by the collection of all classes $[\kappa(\ualpha)]$ associated to sequences $\ualpha=(\alpha_j)_{j\geq 0}$
which satisfy the equality $p=\sum_{j\geq 0}\alpha_j2^j$.

By Poincaré-Lefschetz duality this corresponds to a basis for $H^*(C_p(\D))$.
The \emph{dual} basis of $H_*(C_p(\D))$
happens to be the basis of monomials
\[
Q^{\ualpha}\epsilon\colon =\prod_{j\geq 0}(Q^i\epsilon)^{\alpha_j}\in H_*(C_p(\D)).
\]
This basis consists of all monomials
of weight $p$, using
the isomorphism \eqref{eq:Cohen} in its full meaning (i.e. as an isomorphism of rings,
where $Q$ denotes the first Dyer-Lashof operation).

We will not need this finer result in this article, so in the following the expression
$Q^{\ualpha}$ will only denote
the (unique) homology class in $H_{p-l}(C_p(\D))$ such that the following holds:
for all $\ualpha'=(\alpha'_j)_{j\geq 0}$ with $\sum_{j\geq 0}\alpha'_j=p$, the
algebraic intersection between $Q^{\ualpha}\epsilon\in H_{p-l}(C_p(\D))$
and $[\kappa(\ualpha')]\in\tH_{p+l}(C_p(\D)^{\infty})$
is $1\in\Z_2$ if and only $\ualpha=\ualpha'$.

We now go back to the filtered chain complex $\tCh_*(\cms^{\infty})$.
The $E^1$-page of the associated Leray spectral sequence contains on the $p$-th column
the homology of $F_p/F_{p-1}$; as we have seen the homology of this filtration stratum is
the direct sum of several copies of the homology of $\tCh_*(C_p(\D))$, one copy for each
partition $(m-p)=\sum_{i=1}^g (u_i+v_i)$ with $u_i,v_i\geq 0$.

\begin{defn}
\label{defn:gensymchain}
Consider a partition
\[
 m=p+(m-p)=\pa{\sum_{j=0}^{\infty}\alpha_j2^j}+\pa{\sum_{i=1}^g(u_i+v_i)}
\]
and let $\uu=(u_1,\dots,u_g)$, $\uv=(v_1,\dots, v_g)$, $\ualpha=(\alpha_j)_{j\geq 0}$; denote also $l=\sum_{j=0}^{\infty}\alpha_j$.

We define a chain $\kappa(p,\ualpha,\uu,\uv)$ in
$\tCh_{m+l}(\cms)$: it
is the sum of all tuples $\tup$ of the form $\pa{l,\ux,\uu,\uv}$, for varying $\ux$,
which satisfy the three properties listed in Definition \ref{defn:symchain}.

We call such a chain a
\emph{generalised symmetric chain}; we will see in the following that it is a cycle, and we will
denote by $[\kappa(p,\ualpha,\uu,\uv)]$ the associated homology class in $\tH_{m+l}(\cms^{\infty})$.

If any of $\ualpha$, $\uu$ and $\uv$ vanishes, we omit it from the notation.
\end{defn}

A generalised symmetric chain $\kappa(p,\ualpha,\uu,\uv)$
is not only
a cycle when projected to its filtration quotient $F_p/F_{p-1}$, as the $E^1$-page
of the spectral sequence tells us, but also
in the chain complex $\tCh_*(\cms^{\infty})$ itself.

To prove this fact, first note that an inner boundary of a tuple
$\tup$ preserves the norm; hence the fact that $\kappa(p,\ualpha,\uu,\uv)$ is a cycle
in $F_p/F_{p-1}$ guarantees the fact that all inner boundaries of $\kappa(p,\ualpha,\uu,\uv)$
cancel each other in $\tCh_*(\cms^{\infty})$, that is,
\[
\partial\kappa(p,\ualpha,\uu,\uv)\in \tCh_*(\cms^{\infty})
\]
is equal to the sum of all outer boundaries of the tuples involved.

Note now that outer boundaries of a generalised symmetric chain also cancel out:
the left outer boundary
of a tuple $\tup=(l,\ux,\uu,\uv)$ in the generalised symmetric chain cancels against the right outer boundary
of the tuple $\tup'=(l,\ux',\uu,\uv)$, with $x'_l=x_1$ and $x'_i=x_{i+1}$ for $1\leq i\leq l-1$. If all
$x_i$ happen to be equal, then $\tup=\tup'$ and
we are in the situation described before the proof of Lemma \ref{lem:doperatoropenmodtwo}.

The spectral sequence considered above collapses on its first page and
we have the following lemma:
\begin{lem}
\label{lem:gensymchain}
The homology $H_*(\cms^{\infty})$ has a graded basis given by the classes $[\kappa(p,\ualpha,\uu,\uv)]$
associated to generalised symmetric chains of weight $m$.
\end{lem}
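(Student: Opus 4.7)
The plan is to run the Leray spectral sequence attached to the filtration $F_\bullet\tCh_*(\cms^\infty)$ by norm, show it collapses at $E^1$, and identify the surviving basis with the classes $[\kappa(p,\ualpha,\uu,\uv)]$. All the geometric input has been set up in the discussion preceding the statement; what remains is to assemble it.

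First I would record that by the preceding analysis, $F_p/F_{p-1}$ is, as a chain complex, the direct sum
\[
F_p/F_{p-1}\;\cong\;\bigoplus_{\substack{u_i,v_i\geq 0\\ \sum(u_i+v_i)=m-p}} \tCh_{*-p}(C_p(\D)^\infty),
\]
because in this stratum only inner differentials survive and these are precisely the Fuchs differentials. Invoking Fuchs' computation, each summand has reduced homology with $\Z_2$-basis given by the classes $[\kappa(\ualpha)]$ with $p=\sum_j\alpha_j2^j$. Consequently the $E^1$-page of the spectral sequence has, in total bidegree $(p,m+l-p)$, a $\Z_2$-basis indexed exactly by the data $(p,\ualpha,\uu,\uv)$ that occur in Definition \ref{defn:gensymchain}, and the chain $\kappa(p,\ualpha,\uu,\uv)\in F_p\tCh_{m+l}(\cms^\infty)$ is a representative of the corresponding $E^1$-class.

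The heart of the argument is that each $\kappa(p,\ualpha,\uu,\uv)$ is a cycle in $\tCh_*(\cms^\infty)$ itself, not merely in $F_p/F_{p-1}$. I would split $\partial\kappa(p,\ualpha,\uu,\uv)$ according to Lemma \ref{lem:doperatoropenmodtwo} into inner and outer contributions. Inner boundaries preserve the norm, so their total contribution lies in $F_p$ and projects to the boundary of $\kappa(p,\ualpha,\uu,\uv)$ in $F_p/F_{p-1}$; this vanishes because $\kappa(\ualpha)$ is a Fuchs cycle in each fixed $(\uu,\uv)$-summand. For the outer boundaries, the key observation is the pairing: a left outer boundary of a tuple $\tup=(l,(x_1,\dots,x_l),\uu,\uv)$ appearing in $\kappa(p,\ualpha,\uu,\uv)$, determined by a splitting $x_1=\sum(\tildeu_i+\tildev_i)$, produces the same tuple $\tup''$ with the same binomial coefficient as the right outer boundary, with the same splitting, of $\tup'=(l,(x_2,\dots,x_l,x_1),\uu,\uv)$. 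Since $\tup$ and $\tup'$ are both summands of $\kappa(p,\ualpha,\uu,\uv)$ (the multiset of $x_i$'s is unchanged under cyclic shift), their contributions cancel in $\Z_2$; when all $x_i$ are equal, $\tup=\tup'$ and the two contributions already cancel within a single tuple, matching the remark following Lemma \ref{lem:doperatoropenmodtwo}. Thus $\partial\kappa(p,\ualpha,\uu,\uv)=0$ on the nose.

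Having produced honest cycles representing every $E^1$-basis class, I would conclude that all higher differentials $d_r$ ($r\geq 1$) vanish on the $E^1$-classes $[\kappa(p,\ualpha,\uu,\uv)]$, so the spectral sequence degenerates at $E^1$. Because it converges to $\tH_*(\cms^\infty)$ and the filtration is bounded in each degree, the classes $[\kappa(p,\ualpha,\uu,\uv)]$, lifted to $\tH_*(\cms^\infty)$, form a graded $\Z_2$-basis, proving the lemma. The main obstacle is precisely the outer-boundary cancellation described above; once the cyclic-shift pairing is identified, the rest is bookkeeping.
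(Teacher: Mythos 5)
Your argument is correct and follows the paper's proof essentially verbatim: filter $\tCh_*(\cms^\infty)$ by norm, identify each stratum $F_p/F_{p-1}$ with a direct sum of Fuchs complexes indexed by the partitions $(m-p)=\sum_i(u_i+v_i)$, observe that the generalised symmetric chains are honest cycles because the inner boundaries cancel by Fuchs' computation while the outer boundaries cancel under the cyclic-shift pairing $(x_1,\dots,x_l)\mapsto(x_2,\dots,x_l,x_1)$ (with the all-$x_i$-equal degenerate case already handled in the remark preceding Lemma~\ref{lem:doperatoropenmodtwo}), and conclude that the spectral sequence degenerates at $E^1$. The one small addition you make --- spelling out that the binomial coefficients from Lemma~\ref{lem:doperatoropenmodtwo} match across the cyclic-shift pairing because the splitting data $(\tildeu_i,\tildev_i)$ and the base $(\uu,\uv)$ are literally the same on both sides --- is a detail the paper leaves implicit, but it does not change the structure of the argument.
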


\begin{defn}
 \label{defn:dualHbasis}
We can see the $\U_i$'s and $\V_i$'s as properly embedded $1$-manifolds in $\mrS$;
by Poincaré-Lefschetz duality they represent classes in $\tH_1\pa{(\mrS)^{\infty}}\simeq H^1(\S)$,
and in particular they form a basis
of the latter cohomology group. We call $\u_i,\v_i\in H_1\pa{\mrS}$ the dual basis.
\end{defn}
We establish a bijection between monomials in the tensor product of Theorem \ref{thm:Hbms*as*ggrep}
and the basis of $H^*(\cms)\simeq H_*(\cms^{\infty})$ in Lemma \ref{lem:gensymchain}:
the class 
\[
[\kappa(p,\ualpha,\uu,\uv)]\in H_{m+\sum\alpha_j}(\cms^{\infty})\simeq H^{m-\sum\alpha_j}(\cms)
\]
is associated with the monomial
\[
Q^{\ualpha}\epsilon\cdot\u^{\uu}\cdot\v^{\uv}\colon =\prod_{j=1}^{\infty}(Q^j\epsilon)^{\alpha_j}\otimes \prod_{i=1}^g(\u_i^{u_i}\v_i^{v_i}),
\]
where $\ualpha=(\alpha_j)_{j\geq 0}$, $\uu=(u_1,\dots,u_g)$ and $\uv=(v_1,\dots,v_g)$.

This shows an isomorphism of bigraded $\Z_2$-vector spaces
\begin{equation}\label{eq:isovectorspaces}
  \bigoplus_{m\geq 0} H^*(\cms)\simeq \Z_2\left[Q^j\epsilon\,|\, j\geq 0\right]\otimes\Sym_{\bullet}(\H),
\end{equation}
from which we conclude that
there exists an isomorphism as in Theorem \ref{thm:Hbms*as*ggrep}
at least \emph{as bigraded $\Z_2$-vector spaces}: the two bigraded
vector spaces have the same dimension in all bigradings.

\section{Action of \texorpdfstring{$\gg$}{Gamma(g,1)}}
\label{sec:Actiongg}
We now turn back to \emph{homology} of $\cms$. In the first subsection we describe geometrically some
homology classes, in order to give some intuition for the following subsections.
In the second subsection we prove Theorem
\ref{thm:Hbms*as*ggrep} in bigradings $(*,m)$ with $*=m$.
In the third subsection we extend the proof to all other bigradings.

\subsection{Geometric examples of homology classes}

In this subsection we consider the case $g=2$, hence $\S$ denotes the surface $\Sigma_{2,1}$.
We construct some homology classes in $H_2(C_2(\S))$: our aim is to get a first understanding
of why this homology group is isomorphic to $\Sym_2(\H)=\H^{\otimes 2}/\mathfrak{S}_2$;
the notation $\H=H_1(\S)$ was introduced in Definition \ref{defn:symplrep}.

In the following $c$ and $d$ will always denote two simple closed curves on $\S$, with corresponding homology classes $[c],[d]\in\H$.
\begin{ex}
 \label{ex:one}
Suppose that $c$ and $d$ are as in Figure \ref{fig:exampleone}: $c$ and $d$ are disjoint and non-separating.
We can consider inside $C_2(\S)$
the torus $c\times d$ of configurations in which one of the two points runs along $c$, while the other runs
along $d$. We associate to the fundamental class $\tau_1\in H_2(C_2(\S))$ of this torus the tensor product $[c]\otimes[d]\in\H^{\otimes 2}$.

Since our configurations are unordered and since we work with coefficients in $\Z_2$, the same
class $\tau_1$ can be obtained as a tensor product $[d]\otimes[c]$, i.e. exchanging the order of the two
curves: we can neglect the sign $-1$ that this operation would generate. We represent $\tau_1$ as $[c]\cdot[d]\in\Sym_2(\H)$.
\end{ex}
\begin{figure}[ht]\centering
 \includegraphics[scale=1.0]{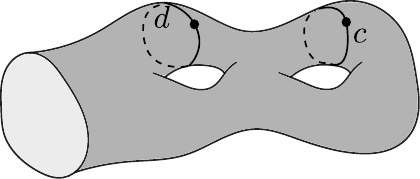}
 \caption{}
\label{fig:exampleone}
\end{figure}

\begin{ex}
 \label{ex:two}
Suppose that $c$ and $d$ are as in Figure \ref{fig:exampletwo}:
$c$ and $d$ are disjoint, and
 $c$ bounds a subsurface $\check{\S}$ of $\S$ (the shaded region), such that $d$ is not contained in $\check{\S}$.
 
 Then the homology class $\tau_2=[c]\cdot[d]$ vanishes, because the torus $c\times d$ is the boundary in $C_2(\S)$
 of the 3-manifold $\check{\S}\times d$ containing all configurations in which one point lies on $\check{\S}$ and
 the other on $d$. This is consistent with the representation $\tau_2=[c]\cdot[d]$, because $[c]=0\in \H$.
\end{ex}
\begin{figure}[ht]\centering
 \includegraphics[scale=1.0]{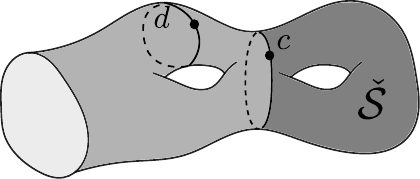}
 \caption{}
\label{fig:exampletwo}
\end{figure}

\begin{ex}
 \label{ex:three}
 Suppose that $c$ and $d$ are as in Figure \ref{fig:examplethree}: $c$ and $d$ are non-separating and 
intersect transversely in one point $P\in\S$. In this case the torus $c\times d$ is a subspace
of $\SP^2(\S)$, but $c\times d$ is not contained in $C_2(\S)$ as in Example \ref{ex:one}.

To solve this problem, we consider a small open neighborhood $P\in \U\subset\S$, and we remove from the torus $c\times d$
all configurations in which both points lie in $\U$: these removed configurations form an open disc inside the torus $c\times d$.

We obtain an embedding $\Sigma_{1,1}\hookrightarrow C_2(\S)$;
the boundary $\partial\Sigma_{1,1}$,
seen as a curve in $C_2(\S)$, is homotopic to a curve $\gamma$ in which one point spins $360^{\circ}$ around the other.
We note that the curve $\gamma\subset C_2(\S)$ is homotopic to a double covering of a curve $\gamma'\subset C_2(\S)$,
in which the two points exchange their positions after spinning $180^{\circ}$ around each other. All curves
$\partial\Sigma_{1,1}$, $\gamma$, $\gamma'$ and the homotopies relating them
are supported on the closure of $C_2(\U)$ in $C_2(\S)$.

We can therefore find a map from a M\"{o}bius band $\M$ to the closure of $C_2(\U)$ in $C_2(\S)$,
such that the images of the curves $\partial\M$ and $\partial\Sigma_{1,1}$ in $C_2(\S)$ coincide.
The union $\Sigma_{1,1}\cup_{\partial}\M$ along the boundary is then a closed
non-orientable surface of genus $3$, i.e. the connected sum of a torus and a projective plane.

The surface $\Sigma_{1,1}\cup_{\partial}\M$ is equipped with a map to $C_2(\S)$, hence its fundamental class with coefficients
in $\Z_2$ yields a homology class $\tau_3\in H_2(C_2(\S))$; thus we have managed to adapt
the construction from Example \ref{ex:one} to the case of two intersecting curves. We represent $\tau_3$
as $[c]\cdot[d]\in\Sym_2(\H)$.
\end{ex}
\begin{figure}[ht]\centering
 \includegraphics[scale=1.0]{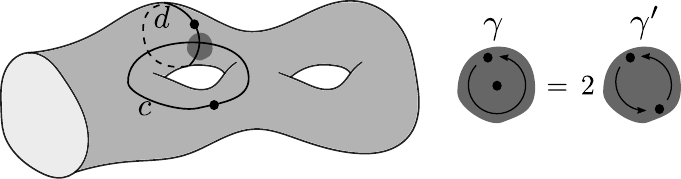}
 \caption{}
\label{fig:examplethree}
\end{figure}

\begin{ex}
 \label{ex:four}
 Suppose that $c$ and $d$ are as in Figure \ref{fig:examplefour}: $c$ and $d$ are disjoint, and
 $c$ bounds a subsurface $\check{\S}$ of $\S$ (the shaded region), such that $d$ is contained in $\check{\S}$.
 
 The torus $c\times d$ is contained in $C_2(\S)$, but it is not obvious, at first glance, why its corresponding homology
 class $\tau_4\in H_2(C_2(\S))$ should vanish: the argument used in Example \ref{ex:two} does not work immediately, because the
 3-manifold $\check{\S}\times d$ is only contained in $\SP^2(\S)$ and not in $C_2(\S)$.
 
 We try to modify this 3-manifold in the same spirit of Example \ref{ex:three}. Let $\U\tilde{\times} d\subset \check{\S}\times d$
 be an open tubular neighborhood of $d\times d$: note that this tubular neighborhood is a trivial bundle over
 $d$ with fiber an open disc $\U$. The notation $\tilde{\times}$ means that abstractly we are dealing with a product $\U\times d$,
 but not geometrically: fibers over different points of $d$ are naturally identified with different discs in $\check{\S}$.
 
 We consider the 3-manifold $\check{\S}\times d\setminus \U\tilde{\times} d$: its boundary is the disjoint union
 of the torus $c\times d$ and the torus $\partial \pa{\U\tilde{\times} d}$. The latter torus contains configurations of
 two points in $\S$, one of which spins around $d$, whereas the other is a \emph{satellite} of the first and spins
 around it $360^{\circ}$.
 
 We regard $\partial \U\tilde{\times} d$ as a trivial bundle $\gamma\tilde{\times} d$ over $d$, with fiber the curve $\gamma$
 considered in Example \ref{ex:three}. After a homotopy the latter torus
 becomes a double covering of a torus $\gamma'\tilde{\times} d$: this is a trivial bundle over $d$ with fiber the curve $\gamma'$;
 we can see $\gamma'\tilde{\times} d\subset C_2(\S)$ as a torus of configurations in which the two points
 exchange their positions spinning $180^{\circ}$ around each other, while their barycenter spins around $d$.
 
 We can fill the second boundary of the $3$-manifold $\check{\S}\times d\setminus \U\tilde{\times} d$
 with the product $\M\times d$: the output is a non-orientable $3$-manifold with boundary $c\times d$, witnessing
 that $\tau_4=0$.
 
 This is consistent with the representation $\tau_4=[c]\cdot[d]$, because $[c]=0\in \H$.
\end{ex}

\begin{figure}[ht]\centering
 \includegraphics[scale=0.8]{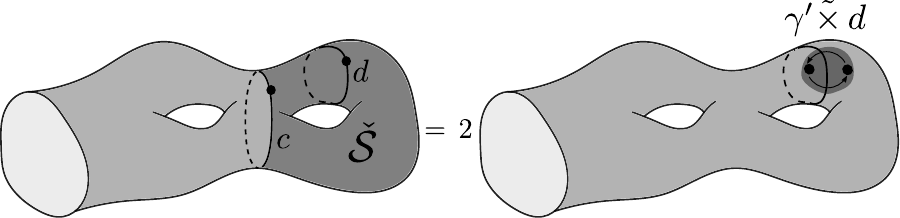}
 \caption{}
\label{fig:examplefour}
\end{figure}

\begin{ex}
 \label{ex:five}
 Suppose that $c$ and $d$ are as in Figure \ref{fig:examplefive}:
 $c$ bounds a subsurface $\check{\S}$, and $d$ is cut by $c$ into two arcs, one of which, denoted by $e$, lies
 in $\check{\S}$.
 
 In this example the complications from Examples \ref{ex:three} and \ref{ex:four} arise at the same time.
 
 To define the class $\tau_5\in H_2(C_2(\S))$, we start with the torus $c\times d$ and we perform two surgeries
 with two different M\"{o}bius bands to solve the two intersections of $c$ and $d$: the homology
 class that we obtain is represented by a non-orientable surface of genus 4, i.e. the connected
 sum of a torus and 2 projective planes.
 
 To show that $\tau_5$ vanishes, we start with the 3-manifold with boundary $\check{\S}\times d\subset\SP^2(\S)$ and we perform
 a surgery. We identify with $e$ the arc
 \[
  \check{\S}\times d\cap\pa{\SP^2(\S)\setminus C_2(\S)};
 \]
 this is a properly embedded arc in the 3-manifold with boundary $\check{\S}\times d$, and a tubular neighborhood
 of it, after a suitable homotopy, can be identified with a trivial $\U$-bundle $\U\tilde{\times} e$. We remove
 this solid cylinder from the 3-manifold and glue a trivial $\M$-bundle $\M\tilde{\times} e$, by applying an argument
 similar as the one in Examples \ref{ex:three} and \ref{ex:four}.
 
 We obtain a 3-manifold with boundary endowed with a map to $C_2(\S)$; the boundary of this 3-manifold
 is precisely the surface used to represent $\tau_5$: therefore $\tau_5=0\in H_2(C_2(\S))$,
 and this is consistent with the representation $\tau_5=[c]\cdot[d]$, because $[c]=0\in \H$.
\end{ex}

\begin{figure}[ht]\centering
 \includegraphics[scale=1]{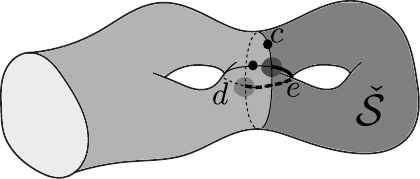}
 \caption{}
\label{fig:examplefive}
\end{figure}

If we consider more than 2 points, the examples become more and more complicated, and proving the theorem with this geometric
approach seems rather difficult.

\subsection{Bigradings of the form \texorpdfstring{$(m,m)$}{(m,m)}}
In this subsection we construct a $\gg$-~equivariant isomorphism 
\[
\psi_m\colon \Sym_m(\H))\to H_m(\cms).
\]

We already know that these $\Z_2$-vector spaces have the same dimension:
indeed $\Sym_m(\H)$ is precisely the summand in bigrading $(m,m)$ in equation
\eqref{eq:isovectorspaces}, using that a monomial
 whose
weight is \emph{equal} and not bigger than its degree cannot contain factors of the form
$Q^i\epsilon$.

The construction of $\psi_m$ is rather long and technical and involves a few definitions.

\begin{defn}
\label{defn:DinTS} 
Recall Definition \ref{defn:TS}. We denote by $\D\subset\T(\S)\cong\mrS$ the open square $]1/4;3/4[\times]1/2,1[$.
Note that $\D$ is an open disc in $\mrS$ near $\partial\S$ and 
it is disjoint from all $\overline{\U}_i$'s and $\overline{\V}_i$'s. The interior of the surface $\S'$
is then identified with $\T(\S)\setminus [1/4,3/4]\times[1/2,1[$.
\end{defn}

\begin{defn}
\label{defn:cCm}
Let $\cC^m$ be the the (discrete) set of isotopy classes of $m$-tuples of simple closed curves $c_1,\dots,c_m\subset\mrS$
such that any two curves $c_i,c_j$ intersect each other only inside $\D$.
 
Curves are seen as maps $\Sone\to\mrS$, and the intersection of two curves is the intersection of their
images. Here and in the following $\Sone$ is the unit circle in $\C$.

Two $m$-tuples of curves are isotopic if there is an ambient isotopy of $\S$ relative to $\partial\S\cup\D$
transforming one $m$-tuple into the other. In particular $\cC^m$ is more than countable.

An element of $\cC^m$ is called \emph{multicurve}; by abuse of notation the $m$-tuple
$(c_1,\dots,c_m)$ will often represent its class in $\cC^m$, i.e. the corresponding multicurve.
See picture \ref{fig:defcCm}

We denote by $\ZcC{m}$ the free $\Z_2$-vector space with basis $\cC^m$.
There is a canonical surjective map $\pr_m\colon\ZcC{m}\to\Sym_m(\H)$ given by
\[
 \pr_m(c_1,\dots,c_m)=[c_1]\cdot\ldots\cdot[c_m],
\]
where $[c_i]\in\H$ is the fundamental class of the curve $c_i$.

The group $\gg$ acts both on $\ZcC{m}$, by acting on its basis $\cC^m$, and
on $\Sym_m(\H)$, symplectically. The map $\pr_m$ is $\gg$-equivariant.
\end{defn}

\begin{figure}[ht]\centering
 \includegraphics[scale=0.7]{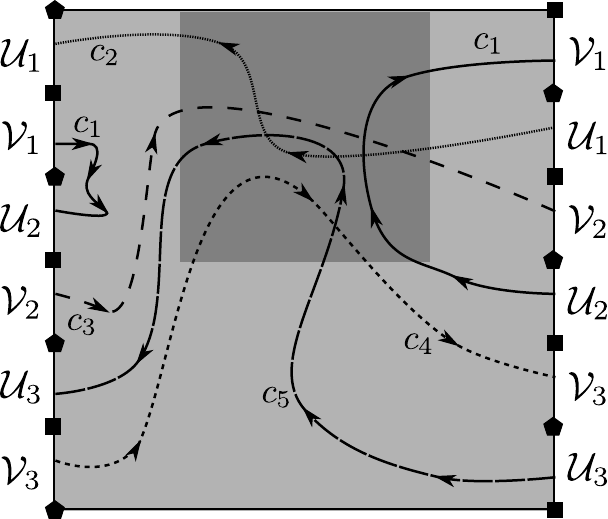}
 \caption{A multicurve in $\cC^5$.}
\label{fig:defcCm}
\end{figure}

\begin{defn}
 \label{defn:cmsD}
 Recall Definition \ref{defn:SP}.
 The space $\cmsD$ is the subspace of $\SP^m(\mrS)$ of configurations where all points
 having multiplicity $\geq 2$ lie inside $D$.
 See Figure \ref{fig:defcmsD}.
 
  Note that $\cmsD$ is open in $\SP^m(\mrS)$. There is an open inclusion $\iota_m\colon\cms\subset\cmsD$ and there is a natural map
  \[
  \j_m\colon \ZcC{m}\to H_m(\cmsD)
  \]
  defined as follows: for a class $(c_1,\dots,c_m)\in\cC^m$
  the composition
  \[
   \begin{CD}
    \pa{\Sone}^{\times m} @>c_1\times\dots\times c_m >> (\mrS)^{\times m} @>>> \SP^m(\mrS)
   \end{CD}
  \]
has image in the subspace $\cmsD$; we define $\j_m(c_1,\dots,c_m)$ as the image
of the fundamental class
of the $m$-fold torus in $H_m(\cmsD)$. The result does not change
if we substitute $(c_1,\dots,c_m)$ with another isotopic $m$-tuple of curves in the same multicurve.

We call $[c_1]\cdot\ldots\cdot[c_m]$ the image in the singular chain complex of $\cmsD$ of
the fundamental cycle of the
$m$-fold torus: this cycle represents the class $\j_m(c_1,\dots,c_m)$ and it is \emph{supported}
on the union $c_1\cup\dots\cup c_m$, meaning that it hits configurations in $\cmsD$ of points of $\mrS$
lying in this union.

The group $\Diff(\S,\D\cup\partial\S)$ acts on $\cmsD$, and there is an induced action of $\gg$
on $H_m(\cmsD)$. The map $\j_m$ is $\gg$-equivariant.
\end{defn}

\begin{lem}
 \label{lem:cms->cmsDinj}
 The inclusion $\iota_m\colon\cms\to\cmsD$ induces an injective map
 \[
  \pa{\iota_m}_*\colon H_m(\cms)\to H_m(\cmsD).
 \]
\end{lem}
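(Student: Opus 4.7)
My approach combines Poincar\'{e} duality with the open/closed long exact sequence in compactly supported cohomology, reducing the lemma to a vanishing statement about the ``collision locus'' $Z:=\cmsD\setminus\cms$.

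First I observe that both $\cms$ and $\cmsD$ are topological $2m$-manifolds. For $\cms$ this is Definition~\ref{defn:cms}. For $\cmsD$: the symmetric product $\SP^m$ of any smooth surface is a $2m$-manifold, since near a point of multiplicity pattern $(n_1,\dots,n_p)$ a local model is $\prod_i\SP^{n_i}(\C)\cong\prod_i\C^{n_i}$ via elementary symmetric functions; and $\cmsD$ is open in $\SP^m(\mrS)$ because the condition ``all multi-points lie in $\D$'' is open. Hence $\iota_m$ is an open embedding between $2m$-manifolds.

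Over $\Z_2$, Poincar\'{e} duality (no orientability required) gives canonical isomorphisms $H_m(\cms)\cong H^m_c(\cms)$ and $H_m(\cmsD)\cong H^m_c(\cmsD)$, and identifies $(\iota_m)_*$ with the extension-by-zero map in compactly supported cohomology. The open/closed long exact sequence for the closed subspace $Z\subset\cmsD$,
\[
\cdots\to H^{m-1}_c(Z)\to H^m_c(\cms)\to H^m_c(\cmsD)\to H^m_c(Z)\to\cdots,
\]
then reduces the lemma to proving the vanishing $H^{m-1}_c(Z)=0$.

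To analyze $Z$, I stratify it by multiplicity partition $\sigma=(n_1\geq\cdots\geq n_p\geq 2)$, setting $s:=m-\sum n_i$ for the number of simple points. The stratum $Z_\sigma$ is a smooth manifold of dimension $2(p+s)=2m-2\sum(n_i-1)\leq 2m-2$, obtained as the quotient by appropriate finite symmetric groups of the open subset of $F_{p+s}(\mrS)$ in which the first $p$ coordinates are constrained to lie in $\D$. The open/closed long exact sequences associated with the filtration of $Z$ by codimension strata assemble $H^*_c(Z)$ from the individual $H^*_c(Z_\sigma)$'s; each of the latter is in principle computable via Poincar\'{e} duality on $Z_\sigma$ together with known descriptions of configuration-space cohomology of (punctured) surfaces.

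The main obstacle is precisely the vanishing $H^{m-1}_c(Z)=0$: since $m-1<\dim Z=2m-2$ for $m\geq 2$, there is no purely dimensional reason for vanishing, and individual $H^{m-1}_c(Z_\sigma)$ are generically nonzero; the argument must establish that these contributions cancel through the connecting maps of the stratification spectral sequence. The geometric intuition from the examples in Section~\ref{sec:Actiongg} -- where each ``collision'' boundary appearing in $\D$ is resolved by gluing a M\"{o}bius band -- supplies the heuristic for why such cancellations occur over $\Z_2$: the normal bundle of each collision stratum admits a $\Z_2$-coorientation that allows one to push cycles off the stratum at the chain level, reflecting at the cohomological level exactly the required vanishing.
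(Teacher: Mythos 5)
Your opening moves match the paper's in spirit: both pass to the dual picture via Poincar\'e duality with $\Z_2$-coefficients (the paper phrases it through $\tilde H_{2m-*}$ of the one-point compactification, you through $H^*_c$; these are the same thing for manifolds). But from there you and the paper part ways, and your route has a genuine gap that you yourself flag in the final paragraph.

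The gap is twofold. First, the long exact sequence
\[
\cdots\to H^{m-1}_c(Z)\to H^m_c(\cms)\to H^m_c(\cmsD)\to\cdots
\]
only requires the \emph{connecting map} $H^{m-1}_c(Z)\to H^m_c(\cms)$ to vanish, not the group $H^{m-1}_c(Z)$ itself; by targeting the stronger statement $H^{m-1}_c(Z)=0$ you have potentially overcommitted -- that group may well be nonzero, and then your approach cannot close. Second, even granting the reduction, you supply no proof of the vanishing: the stratification by multiplicity partition gives a spectral sequence whose $E_1$-page is not obviously concentrated away from total degree $m-1$, and as you note the individual strata contribute nonzero groups in that degree. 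The M\"obius-band heuristic from Section~\ref{sec:Actiongg} is suggestive but is not an argument; it gestures at why a single collision can be resolved at the chain level, but does not produce the global cancellation you need across strata.

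The paper avoids all of this by working explicitly with the cell structure on $\cms^{\infty}$ built in Section~\ref{sec:HBraidSurf}. Lemma~\ref{lem:gensymchain} gives a basis of $\tilde H_m(\cms^{\infty})$ by generalised symmetric chains, and in degree $m$ every basis class $[\kappa(\uu,\uv)]$ is represented by a \emph{single} cell $e^{\tup}$ with $\tup=(0,\uu,\uv)$, whose characteristic map $\Phi^{\tup}$ factors through $\cmsD^{\infty}$. That factorisation immediately shows the collapse map $\tilde H_m(\cmsD^{\infty})\to\tilde H_m(\cms^{\infty})$ is surjective, which dualizes to the desired injectivity. In other words, the paper proves surjectivity by exhibiting explicit lifts of a known basis, sidestepping any analysis of the collision locus $Z$. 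To rescue your approach you would at minimum need to (i) weaken the target to the vanishing of the connecting map, and (ii) actually carry out the stratification spectral-sequence computation -- and even then it would be substantially longer than the paper's one-paragraph argument, which crucially leans on the machinery already set up in the previous section.
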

\begin{proof}
 It is equivalent to prove that the map $\iota_m^*\colon H^m(\cmsD)\to H^m(\cms)$ is surjective,
 or, using Poincaré-Lefschetz duality, that the map $\tH_m(\cmsD^{\infty})\to \tH_m(\cms^{\infty})$
 is surjective: this last map is induced by the map $\cmsD^{\infty}\to\cms^{\infty}$ collapsing the subspace
 $\cmsD^{\infty}\setminus\cms$ to $\infty$.
 
 Recall Lemma \ref{lem:gensymchain} and Definition \ref{defn:gensymchain}. A basis for $\tH_m(\cms^{\infty})$
 is given by classes $[\kappa(0,\uu,\uv)]$, with
 $\uu=(u_1,\dots, u_g)$, $\uv=(v_1,\dots,v_g)$ and
 $\sum_{i=1}^g (u_i+v_i)=m$.
 
 The class $[\kappa(\uu,\uv)]$ is represented by a generalised symmetric chain consisting of only one tuple $\tup=(0,\uu,\uv)$.
 
 In particular there is a map of pairs
 $\phi^{\tup}\colon(\Delta^{\tup},\partial\Delta^{\tup})\to (\cms^{\infty},\infty)$, and the class $[\kappa(\uu,\uv)]$
 is the image along this map of the fundamental class of $H_m\pa{\Delta^{\tup},\partial\Delta^{\tup}}$.
 
 It is straightforward to check that the map $\phi^{\tup}$ factors through
 $(\cmsD,\infty)$, as a map of pairs; surjectivity of $\tH_m(\cmsD^{\infty})\to \tH_m(\cms^{\infty})$ follows.
\end{proof}

We have the following diagram of $\gg$-equivariant maps, where the full arrows are those
that we have already constructed, and we still have to prove the existence of the dashed arrows
\begin{equation}
\label{eq:fulldasheddiagram}
\begin{tikzcd}[column sep=8em,row sep=5em]
  \ZcC{m} \ar[r,"\pr_m",two heads]\ar[d,dashed, swap, "\tpsi_m"]\ar[dr,"\j_m",near start]
  & \Sym_m(\H)\ar[d,dashed ]\ar[dl,dashed,very near start,"\psi_m"]\\
  H_m(\cms)\ar[r,swap,"\pa{\iota_m}*",hook] & H_m\pa{\cmsD}.
 \end{tikzcd}
\end{equation}

We now prove that the map $\j_m$ lifts along $(\iota_m)_*$
to a $\gg$-equivariant map $\tpsi_m$ as in the diagram.
Since $(\iota_m)_*$ is injective by Lemma \ref{lem:cms->cmsDinj}, it suffices to prove that $\j_m$
lands in the image of $(\iota_m)_*$, and this last statement does not depend on how $\gg$ acts on these groups.

We will prove by induction on $m$ the following technical lemma:

\begin{lem}
 \label{lem:tpsiwithproperties}
For each $m$-tuple of curves $(c_1,\dots,c_m)$ representing a class in $\cC^m$
and for each open neighborhood $\N\subset\mrS$ of
$\D\cup c_1\cup\dots\cup c_m$, there is a singular cycle $\fc=\tpsi_m(c_1,\dots,c_m)$
in $\cms$ with the following properties:
\begin{itemize}
 \item the cycle $\fc$ is \emph{supported} on $\N$, i.e.,
this singular cycle only hits configurations of $m$ distinct points of $\mrS$ that actually lie in $\N$;
\item $\pa{\iota_m}_*(\fc)$ represents the homology class $\j_m(c_1,\dots,c_m)\in H_m\pa{\cmsD}$;
\item the two cycles $\pa{\iota_m}_*(\fc)$ and $[c_1]\cdot\ldots\cdot[c_m]$
are connected by a homology in $\cmsD$ which is supported on $\N$ (the word
\emph{homology} denotes here a $(m+1)$-singular chain whose boundary is the difference between the two cycles).
\end{itemize}
\end{lem}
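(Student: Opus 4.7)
The plan is to induct on $m$. The base cases $m=0$ and $m=1$ are immediate: for $m=0$ take the generator of $H_0$, and for $m=1$ the map $\iota_1$ reduces to the identity on $\mrS$, so the singular cycle carried by the loop $c_1$ works.

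For $m\geq 2$, I would start from the torus cycle $T=[c_1]\cdots[c_m]$ in $\cmsD$ and resolve every pairwise intersection via a local M\"obius-band surgery as in Example \ref{ex:three}. After isotoping inside $\N$ we may assume the curves meet pairwise transversally, no three share a point, and all intersection points $P_1,\ldots,P_N$ lie in $\D$; pick pairwise disjoint discs $P_\ell\in\U_\ell\subset\D$. Near each $P_\ell\in c_{i_\ell}\cap c_{j_\ell}$, the map $(\Sone)^m\to\cmsD$ realising $T$ fails to land in $\cms$ exactly on a sub-torus $T_\ell$ of dimension $m-2$ (the locus where $\theta_{i_\ell},\theta_{j_\ell}$ are pinned to the two angles mapping to $P_\ell$), and its tubular neighbourhood in $(\Sone)^m$ is canonically a product $D^2_\ell\times T_\ell$. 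I would excise each such tubular neighbourhood and glue in $\M_\ell\times T_\ell$, where $\M_\ell\subset C_2(\U_\ell)\subset\cms$ is the M\"obius band from Example \ref{ex:three}, whose boundary is freely homotopic inside $C_2(\U_\ell)$ to the $360^\circ$-rotation loop bounding $\partial D^2_\ell$. The outcome is a closed non-orientable $m$-manifold $Y$ together with a map $Y\to\cms$, and I define $\fc$ to be the image of its $\Z_2$-fundamental class.

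Property (i) is then immediate: $\fc$ is supported on $\pa{\bigcup_i c_i}\cup\pa{\bigcup_\ell\U_\ell}\subset\N$. For (ii) and (iii), the difference $(\iota_m)_*\fc-T$ is, by construction, a sum over $\ell$ of $m$-cycles $\pa{\M_\ell-D^2_\ell}\times T_\ell$ inside $\cmsD$, where $D^2_\ell\subset\SP^2(\U_\ell)$ is the disc that was excised from $T$. Each closed $2$-cycle $\M_\ell\cup_\partial D^2_\ell$ sits in $\SP^2(\U_\ell)\subset\cmsD$; this symmetric product of the open disc $\U_\ell$ is contractible, so the $2$-cycle bounds a $3$-chain $R_\ell$ there. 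The desired $(m+1)$-chain witnessing (ii) and (iii) is then $\sum_\ell R_\ell\times T_\ell$, supported in $\N$ by construction.

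The hardest step is to make the formal product $R_\ell\times T_\ell$ into an honest chain in $\cmsD$: the configuration $\set{x_1,x_2}\cup\set{c_k(\theta_k):k\neq i_\ell,j_\ell}$ with $\set{x_1,x_2}\in\SP^2(\U_\ell)$ can fail to lie in $\cmsD$ if some $c_k(\theta_k)$ enters $\U_\ell$ outside $\D$, or if two of the $c_k(\theta_k)$ collide outside $\D$. Shrinking the $\U_\ell$ (permitted since $\N$ is only required to be an open neighbourhood of $\D\cup\bigcup c_i$) reduces the first issue to a transverse intersection with thin arcs, and the second is handled by invoking the inductive hypothesis on the submulticurve $(c_k)_{k\neq i_\ell,j_\ell}\in\cC^{m-2}$: one replaces the torus factor $T_\ell$ by the cycle provided by induction, which by property (iii) is already homologous to $T_\ell$ in $\cmsD$ via a chain supported in $\N$. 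This is precisely where the induction on $m$ is genuinely used.
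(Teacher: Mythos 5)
Your approach is genuinely different from the paper's, and it has a real gap. The paper's inductive step adds only \emph{one} curve at a time: by induction it first produces a cycle $\fc=\tpsi_m(c_2,\dots,c_{m+1})$ supported on $\N'=(\N\setminus c_1)\cup\D$, so that the torus $[c_1]\cdot\fc$ has \emph{at most one} coincidence at a time (between the $c_1$-point and a point of $\fc$), necessarily inside $\D$; it then lives in the intermediate space $\cmstwoD$ where $\cmstwoD\setminus C_{m+1}(\S)$ is a clean codimension-$2$ submanifold, and the lift is obtained by a purely homological argument (the relative map $H_{m+1}(\fmstwoD,C_{1,m}(\S))\to H_{m+1}(\cmstwoD,C_{m+1}(\S))$ is multiplication by $2=0$ in $\Z_2$, via the Thom isomorphism and the $2$-fold ramified covering of normal bundles of the diagonal loci). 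You instead try to resolve \emph{all} pairwise intersections of $c_1,\dots,c_m$ simultaneously by surgery on $(\Sone)^m$; this is closer in spirit to the heuristic Examples \ref{ex:three}--\ref{ex:five} than to the actual proof.

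The gap is in the simultaneous surgery. Your resolution loci $T_\ell\subset(\Sone)^m$ need not be disjoint: if $m\geq 4$ and two intersection points $P_\ell\in c_{i_\ell}\cap c_{j_\ell}$, $P_{\ell'}\in c_{i_{\ell'}}\cap c_{j_{\ell'}}$ have $\set{i_\ell,j_\ell}\cap\set{i_{\ell'},j_{\ell'}}=\emptyset$ (e.g.\ $c_1\cap c_2\neq\emptyset$ and $c_3\cap c_4\neq\emptyset$), then $T_\ell\cap T_{\ell'}$ is a codimension-$4$ subtorus and the tubular neighbourhoods $D^2_\ell\times T_\ell$, $D^2_{\ell'}\times T_{\ell'}$ overlap. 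Consequently: (a) the excision-and-gluing does not produce a closed manifold $Y$ as described; (b) the glued-in piece $\M_\ell\times T_\ell$ does not map to $\cms$, because at points of $T_\ell$ where $c_{i_{\ell'}}(\theta_{i_{\ell'}})=c_{j_{\ell'}}(\theta_{j_{\ell'}})$ the configuration still has a coincidence; and (c) the claimed chain identity $(\iota_m)_*\fc-T=\sum_\ell(\M_\ell-D^2_\ell)\times T_\ell$ fails because the terms interact on the overlaps. Your use of the inductive hypothesis does not repair this: the problem you flag for $R_\ell\times T_\ell$ (landing outside $\cmsD$) actually does \emph{not} arise, since $\U_\ell\subset\D$ and all intersections of the remaining $c_k$'s lie in $\D$ by definition of $\cC^m$, whereas the genuine problem --- the $T_\ell$'s carrying their own unresolved coincidences and intersecting each other --- is not addressed at all. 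This is precisely what the one-curve-at-a-time induction and the deliberate choice of $\N'=(\N\setminus c_1)\cup\D$ in the paper are designed to avoid.
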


For $m=0$ both $\ZcC{0}$ and $H_0(C_0(\S))$ are
isomorphic to $\Z_2$ and there is nothing to show. For $m=1$
we have a canonical identification $H_1(C_1(\S))\simeq\H\simeq\Sym_1(\H)$, so we take $\tpsi_1=\pr_1$;
obviously for all $c_1$ representing a class in $\cC^1$, the homology class $\pr_1(c_1)\in\H$
is represented by a cycle supported on $c_1$,
and in this case the cycles $\iota_*\pa{\tpsi(c_1)}$ and $\pr_1(c_1)=\j_m(c_1)$ coincide.

Let now $m\geq 1$ and in the following fix a class $(c_1,\dots, c_{m+1})\in\cC^{m+1}$.

\begin{defn}
\label{defn:variationsCm}
We introduce several variations of the notion of configuration space; see Figure \ref{fig:defcmsD}.
\begin{itemize} 
 \item The space $C_{1,m}(\S)$ is the subspace of $\mrS\times \cms$ containing all configurations
 $\pa{\bar P;\set{P_1,\dots,P_m}}$ with $\bar P\neq P_i$ for all $i$; in other words it is
 the space of configurations of $m+1$ points, one of which is \emph{white} (meaning that
 it is distinguishable from the other points), whereas the other are \emph{black} and not distinguishable
 from each other.
 \item The space $\fmstwoD$ is the subspace of $\mrS\times\cms$ containing all configurations
 $\pa{\bar P;\set{P_1,\dots, P_m}}$ where either $\bar P\in\D$ may coincide with \emph{exactly}
 one $P_i$, or
 $\bar P\not\in \D$ must be distinct from all $P_i$'s.
 Again $\bar P$ is called the white point.
 \item The space $\cmstwoD$ is the subspace of $\SP^{m+1}(\mrS)$ of configurations where either all $m+1$ points
 are distinct, or there is exactly one point \emph{inside $\D$} with multiplicity $2$ and $m-1$ other points,
 somewhere in $\mrS$, with multiplicity $1$.
\end{itemize}
 \end{defn}
 
\begin{figure}[ht]\centering
 \includegraphics{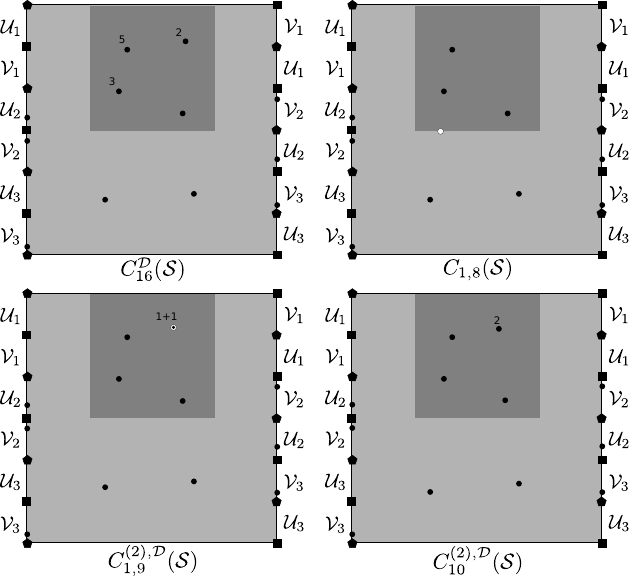}
 \caption{A configuration in each of the space introduced in Definitions \ref{defn:cmsD} and \ref{defn:variationsCm}.
 Whenever a multiplicity is not specified, it is equal to 1.}
\label{fig:defcmsD}
\end{figure}

We have the following inclusions:
\[
C_{1,m}(\S)\subset\fmstwoD\subset\mrS\times\cmsD\subset\mrS\times\SP^m(\mrS);
\]
\[
C_{m+1}(\S)\subset\cmstwoD\subset C_{m+1}^{\D}(\S)\subset\SP^{m+1}(\mrS).
\]
All these spaces are
manifolds of dimension $2m+2$ and all inclusions are open.

In particular there is a sequence of maps
\[
 H_{m+1}\pa{C_{m+1}(\S)}\to H_{m+1}\pa{\cmstwoD}\to H_{m+1}\pa{C_{m+1}^{\D}(\S)}
\]
and we will first lift the homology class $j_{m+1}(c_1,\dots,c_{m+1})$ to $H_m\pa{\cmstwoD}$ and
then to $H_{m+1}(C_{m+1}(\S))$, each time controlling the support of our representing
cycles and of the homologies between them.

Fix a neighborhood $\N$ of $\D\cup c_1\cup\dots\cup c_{m+1}$.

For the first lift, let $\N'=\pa{\N\setminus c_1}\cup\D$; note that $\N'$ is open
in $\mrS$ and contains $\D\cup c_2\cup\dots\cup c_{m+1}$, so $\N'$ is an open
neighborhood of $\D\cup c_2\cup\dots\cup c_{m+1}$ in $\mrS$. By inductive hypothesis
there is a cycle $\fc=\tpsi_m(c_2,\dots,c_{m+1})$ in $\cms$ which is supported on $\N'$,
and such that $(\iota_m)_*(\fc)$ is homologous to $[c_2]\cdot\ldots\cdot[c_{m+1}]$
along a homology in $\cmsD$ supported on $\N'$ as well.

We can multiply both
cycles and the homology between them by the cycle $[c_1]$: the result are the two homologous cycles
$[c_1]\cdot \fc$ and $[c_1]\cdot\ldots\cdot[c_{m+1}]$ in $C_{m+1}^{\D}(\S)$: both cycles and the
homology between them are supported on $\N$. Note now that the
cycle $[c_1]\cdot\fc$ lives in $\cmstwoD$, so the first lift is done and we can now
deal with the second lift.

There is a natural map $\p\colon \fmstwoD\to \cmstwoD$, which converts the white point
into a black point. This map restricts to 
a map $C_{1,m}(\S)\to C_{m+1}(\S)$, so that
we have a commutative diagram
 \begin{equation}\label{eq:cmstwodiagram}
  \begin{CD}
   C_{1,m}(\S) @>\subset >> \fmstwoD
\\   @V\p VV @V\p VV
\\   C_{m+1}(\S) @>\subset >> \cmstwoD
   \end{CD}
\end{equation}

\begin{defn}
\label{defn:falsediagonals} 
Let 
\[
\Dmone=\cmstwoD\setminus C_{m+1}(\S)
\]
and similarly
\[
\Donem=\fmstwoD\setminus C_{1,m}(\S).
\]
We note that $\p$ restricts to a homeomorphism $\Donem\to\Dmone$. Moreover both $\Dmone\subset\cmstwoD$
and $\Donem\subset\fmstwoD$ are closed submanifolds of codimension $2$, and the map $\p$ restricts to
a $2$-fold ramified covering between their respective normal bundles.
\end{defn}

Diagram \eqref{eq:cmstwodiagram} induces a commutative diagram in homology
\begin{equation}
 \label{eq:fivediagram}
\minCDarrowwidth15pt
 \begin{CD}
  @. H_{m+1}\pa{\fmstwoD} @>>> H_{m+1}\pa{\fmstwoD, C_{1,m}(\S)}\\
  @. @V\p_*VV @V\p_*VV\\
  H_{m+1}\pa{ C_{m+1}(\S)} @>>> H_{m+1}\pa{\cmstwoD} @>>> H_{m+1}\pa{\cmstwoD,C_{m+1}(\S)}
 \end{CD}
\end{equation}

Recall that we want to lift the homology class represented by the cycle $[c_1]\cdot\fc$
from the bottom central group to the bottom left group.

We first note that there is a lift of $[c_1]\cdot\fc$ to a cycle $[c_1]\otimes\fc$ in $\pa{\fmstwoD}$:
this is defined by declaring the point in $[c_1]\cdot\fc$ that spins around $c_1$
to be white. We then note that the right vertical map
\[
\p_*\colon H_{m+1}\pa{\fmstwoD, C_{1,m}(\S)} \to H_{m+1}\pa{\cmstwoD,C_{m+1}(\S)}
\]
can be rewritten, after using excision to tubular neighborhoods of $\Donem$ and $\Dmone$ respectively,
and the Thom isomorphism, as a map
\[
 H_{m-1}(\Donem)\to H_{m-1}(\Dmone).
\]
The latter map is multiplication by $2$, after identifying $\Dmone$ and $\Donem$ along $p$:
indeed the normal bundle of $\Donem$ is a
double covering of the normal bundle of $\Dmone$, hence the Thom class of the first disc
bundle corresponds to twice the Thom class of the second disc bundle. We are working
with coefficients in $\Z_2$, so multiplication by $2$ is the zero map.

Therefore the image of the cycle $[c_1]\otimes \fc$ along the diagonal of the square
in diagram \eqref{eq:fivediagram} is zero; hence the image of $[c_1]\cdot\fc$ in $H_{m+1}\pa{\cmstwoD,C_{m+1}(\S)}$
is zero; hence the homology class of $[c_1]\cdot\fc$ comes from $H_{m+1}(C_{m+1}(\S))$. More
precisely, there exists a cycle $\fc'$ in $C_{m+1}(\S)$ such that $\pa{\iota_{m+1}}_*(\fc')$ is homologous
to $[c_1]\cdot\fc$.

To prove Lemma \ref{lem:tpsiwithproperties} we need to find a good cycle and
a good homology, namely two that are supported on $\N$: a priori both $\fc'$ and the homology
between $\pa{\iota_{m+1}}_*(\fc')$ and $[c_1]\cdot\fc$ are only supported on $\mrS$.

This can be done by replacing, in the whole argument of the proof, the surface $\mrS$ with the surface $\N$.
We can define configuration spaces as in Definition \ref{defn:variationsCm} also for the open surface
$\N$, and we can repeat the argument considering $\N$ as the \emph{ambient surface}:
indeed we only needed a surface containing $\D$ and all curves $c_1,\dots,c_{m+1}$.

It is crucial that the action of $\gg$ is not involved in the statement of Lemma
\ref{lem:tpsiwithproperties}, as $\N\subset\S$ is not preserved, even up to isotopy,
by diffeomorphisms of $\S$.
Lemma \ref{lem:tpsiwithproperties} is proved.

We now have to prove the following lemma to conclude the proof of Theorem \ref{thm:Hbms*as*ggrep}
in bigradings $(m,m)$.
\begin{lem}
 \label{lem:tpsi->psi}
The map $\tpsi_m\colon\ZcC{m}\to H_m(\cms)$ is surjective and factors through the map $\pr_m$.
\end{lem}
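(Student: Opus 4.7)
The plan is to exhibit, for each monomial basis element of $\Sym_m(\H)$, a specific multicurve whose $\tpsi_m$-image is the Poincaré-Lefschetz dual of a basis element $[\kappa(0,\uu,\uv)]\in\tH_m(\cms^{\infty})$ from Lemma \ref{lem:gensymchain}; both surjectivity and factoring will then follow from a single intersection-number computation. For each $(\uu,\uv)$ with $\sum_i(u_i+v_i)=m$, I would fix simple closed curves $\alpha_i,\beta_i\subset\mrS$ representing the classes $\u_i,\v_i\in\H$ and chosen so that $\alpha_i$ meets the arc $\U_i$ transversely in exactly one point and is disjoint from all other arcs $\U_j$ ($j\neq i$) and from every $\V_j$, and symmetrically for $\beta_i$. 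Let $\gamma(\uu,\uv)\in\cC^m$ be the multicurve consisting of $u_i$ parallel disjoint copies of $\alpha_i$ and $v_i$ of $\beta_i$; the multicurve condition is vacuous since the curves are pairwise disjoint. By construction $\pr_m(\gamma(\uu,\uv))=\prod_i\u_i^{u_i}\v_i^{v_i}$, and as $(\uu,\uv)$ varies these form a $\Z_2$-basis of $\Sym_m(\H)$.

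For surjectivity I would compute the Poincaré-Lefschetz pairing of $\tpsi_m(\gamma(\uu,\uv))\in H_m(\cms)$ with $[\kappa(0,\uu',\uv')]\in\tH_m(\cms^{\infty})$ by a transverse intersection in $\cms$. By Lemma \ref{lem:tpsiwithproperties} the cycle $\tpsi_m(\gamma(\uu,\uv))$ can be chosen supported in any prescribed small neighborhood of $\gamma(\uu,\uv)\cup\D$, while $[\kappa(0,\uu',\uv')]$ is carried by the $m$-cell $e^{\tup}$ with $\tup=(0,\uu',\uv')$ of configurations placing $u'_i$ points on $\U_i$ and $v'_i$ on $\V_i$. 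A transverse intersection consists of configurations whose $m$ points lie simultaneously on the curves of $\gamma(\uu,\uv)$ and on the arcs $\U_i,\V_i$; by the choice of $\alpha_i,\beta_i$ there is none unless $(\uu,\uv)=(\uu',\uv')$, in which case a unique (unordered) such configuration exists, so the pairing is $\delta_{(\uu,\uv),(\uu',\uv')}$. Hence $\{\tpsi_m(\gamma(\uu,\uv))\}$ is the basis of $H_m(\cms)$ dual to $\{[\kappa(0,\uu,\uv)]\}$, proving that $\tpsi_m$ is surjective.

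The factoring follows from the same calculation applied to an arbitrary multicurve $\gamma=(c_1,\dots,c_m)$: its $(\uu,\uv)$-coordinate in the above basis is $\langle\tpsi_m(\gamma),[\kappa(0,\uu,\uv)]\rangle$, which counts (mod $2$) configurations made of intersection points of the $c_j$'s with the arcs $\U_i,\V_i$ that have the prescribed multiplicities. This count is precisely the coefficient of $\u^{\uu}\v^{\uv}$ in the expansion of $\prod_j[c_j]=\pr_m(\gamma)\in\Sym_m(\H)$, since the coordinates of $[c_j]\in\H$ in the basis $\{\u_i,\v_i\}$ are exactly the intersection numbers $c_j\cdot\U_i,c_j\cdot\V_i\in\Z_2$ (Definition \ref{defn:dualHbasis}). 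Therefore $\tpsi_m(\gamma)$ depends only on $\pr_m(\gamma)$, which gives the factoring and hence the desired $\gg$-equivariant isomorphism $\psi_m\colon\Sym_m(\H)\xrightarrow{\cong}H_m(\cms)$. The main technical difficulty is justifying the transverse intersection computation: the cycle $\tpsi_m(\gamma)$ is produced by the inductive surgery of Lemma \ref{lem:tpsiwithproperties}, and it is precisely the support-control clause of that lemma that permits its reduction to the elementary count on the finite set $(\cup_j c_j)\cap(\cup_i\U_i\cup\V_i)$ described above.
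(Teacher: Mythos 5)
Your overall strategy is the same as the paper's: compute the pairing of $\tpsi_m(\gamma)$ against the dual basis $\set{[\kappa(0,\uu,\uv)]}$ of $\tH_m(\cms^{\infty})$, show the resulting functionals factor through $\pr_m$, and use a specific family of multicurves for surjectivity. The final count you describe (intersection number equals the coefficient of $\u^{\uu}\v^{\uv}$ in $\prod_j[c_j]$) is correct, and the choice of $\alpha_i,\beta_i$ relative to the arcs is right. However, there are two genuine gaps.

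First, the claim that the multicurve $\gamma(\uu,\uv)$ can be taken with pairwise disjoint curves is false in general: when $u_i\geq 1$ and $v_i\geq 1$ the curves $\alpha_i$ and $\beta_i$ must intersect, since $\u_i\cdot\v_i\neq 0$ in $\H$. This is exactly why the multicurve condition requires intersections to be confined to $\D$; the paper's surjectivity argument explicitly says ``such that all intersections between these curves lie in $\D$.'' The same issue is unavoidable in the factoring step, which must handle an arbitrary multicurve.

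Second, and more fundamentally, the justification for the transverse-intersection computation is wrong. You invoke the support-control clause of Lemma \ref{lem:tpsiwithproperties}, but this clause only restricts the set of configurations the cycle $\tpsi_m(\gamma)$ passes through; it gives you no explicit transverse representative with which to count points against $e^{\tup}$, since $\tpsi_m(\gamma)$ is the output of iterated surgeries (and, once curves intersect in $\D$, the torus $[c_1]\cdot\ldots\cdot[c_m]$ does not even lie in $\cms$). The paper resolves this by observing (from the proof of Lemma \ref{lem:cms->cmsDinj}) that the cohomology class $\cdot\cap e^{\tup}$ on $\cms$ is pulled back along $\iota_m$ from a class on $\cmsD$; combined with $\pa{\iota_m}_*\pa{\tpsi_m(\gamma)}=\j_m(\gamma)$, the pairing is computed in $\cmsD$ where $\j_m(\gamma)$ is represented by the explicit torus $[c_1]\cdot\ldots\cdot[c_m]$, which \emph{is} transverse to $e^{\tup}$. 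That passage to $\cmsD$, rather than support control, is what makes the elementary count on $(\cup_j c_j)\cap(\cup_i\U_i\cup\V_i)$ legitimate. Without it, the pairing $\langle\tpsi_m(\gamma),[\kappa(0,\uu,\uv)]\rangle$ is defined but not computable by the count you propose.
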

\begin{proof}
The factorisation is equivalent to the inclusion $\ker\pr_m\subseteq\ker\tpsi_m$: since both
$\pr_m$ and $\tpsi_m$
are $\gg$-equivariant, also the induced map of vector spaces
\[
\Sym_m(\H)=\ZcC{m}/\ker\pr_m\to H_m(\cms)
\]
will automatically be
$\gg$-equivariant.

Recall from the proof of Lemma \ref{lem:cms->cmsDinj} that a basis for $\tH_m(\cms^{\infty})$ 
is given by the classes $[\kappa(\uu,\uv)]$, represented by generalised
symmetric chains consisting of only one tuple $\tup=(0,\uu,\uv)$, for some vectors
$\uu=(u_1,\dots,u_g)$ and $\uv=(v_1,\dots,v_g)$ satisfying $\sum_{i=1}^g(u_i+v_i)=m$.

The homology class
$[\kappa(\uu,\uv)]$ is the fundamental class
of the sphere $e^{\tup}\cup\set{\infty}\subset\cms^{\infty}$: the inclusion of this sphere in $\cms^{\infty}$
restricts to a proper embedding $e^{\tup}\subset\cms$.

By Poincaré Lefschetz duality $\tH_m(\cms^{\infty})\simeq H^m(\cms)$, and the latter is
the dual of $H_m(\cms)$.

We can therefore associate to $[\kappa(\uu,\uv)]$ a linear functional
on $H_m(\cms)$. This is the algebraic intersection product with the cell $e^{\tup}$, seen as a proper submanifold of $\cms$:
we denote it by
\[
 \cdot\cap e^{\tup}\colon H_m(\cms)\to\Z_2.
\]
Therefore
\[
 \ker\tpsi_m=\bigcap_{\tup}\ker\pa{(\cdot\cap e^{\tup})\circ\tpsi_m},
\]
and it suffices to check that $\ker\pr_m\subseteq\ker\pa{(\cdot\cap e^{\tup})\circ\tpsi_m}$
for all $\tup$ of the form $(0,\uu,\uv)$, or equivalently, that $(\cdot\cap e^{\tup})\circ\tpsi_m$ factors through $\pr_m$.

Recall from the proof of Lemma \ref{lem:cms->cmsDinj} that the cohomology class $(\cdot\cap e^{\tup})$ on $\cms$ is a pullback
of a cohomology class of $\cmsD$, that we call $(\cdot\cap e^{\tup})^{\D}$. Alternatively,
note that the inclusion $e^{\tup}\cap \set{\infty}\to\cms^{\infty}$ is the composition of the inclusion $e^{\tup}\cap\set{\infty}\to\cmsD^{\infty}$
and the quotient map $\cmsD^{\infty}\to\cms^{\infty}$, and consider the fundamental class of the sphere $e^{\tup}\cup\set{\infty}$ and its images.

We can therefore compute the map $(\cdot\cap e^{\tup})\circ\tpsi_m$ as the map 
\begin{equation}
\label{eq:badformula}
(\cdot\cap e^{\tup})^{\D}\circ\j_m\colon \ZcC{m}\to\Z_2.
\end{equation}
The latter map coincides with the composition
\begin{equation}
 \label{eq:productformula}
\pa{ \prod_{i=1}^g(\cdot\cap\U_i)^{u_i}(\cdot\cap\V_i)^{v_i}}\circ\pr_m,
\end{equation}

where $\prod_{i=1}^g(\cdot\cap\U_i)^{u_i}(\cdot\cap\V_i)^{v_i}\in\Sym_m(\Hom(\H;\Z_2))=\Hom\pa{\Sym_m(\H);\Z_2}$.

This can be checked on every generator $(c_1,\dots, c_m)\in\ZcC{m}$ by chosing in the isotopy class a representative $(c_1,\dots, c_m)$
with all curves $c_i$ transverse to all segments $\U_j$ and $\V_j$.

Consider again the map $c_1\times\dots\times c_m\colon \pa{\Sone}^{\times m}\to\cmsD$ that we used to define the cycle $[c_1]\cdot\dots[c_m]$
representing the class $\j_m(c_1,\dots,c_m)$ (see Definition \ref{defn:cmsD}):
this map is an embedding near $e^{\tup}$ and is transverse to $e^{\tup}$.

The equality of the maps in equations \eqref{eq:badformula} and \eqref{eq:productformula} on the generator $(c_1,\dots,c_m)\in\ZcC{m}$
follows from a straightforward
computation (in $\Z_2$) of the cardinality of the set $[c_1]\cdot\dots[c_m]\cap e^{\tup}$ in terms
of the cardinalities of all sets of the form $c_i\cap\U_j$ and $c_i\cap \V_j$.
In particular $(\cdot\cap e^{\tup})\circ\tpsi_m$ factors through $\pr_m$.

To show surjectivity of $\tpsi_m$, choose a tuple $\tup$ of the form $(0,\uu,\uv)$
and an $m$-tuple of curves $(c_1,\dots,c_m)$ containing, for every $1\leq i\leq g$, $u_i$ parallel
copies of some curve representing $\u_i$ and $v_i$ parallel copies of some curve representing
$\v_i$ (see Definition \ref{defn:dualHbasis}), such that all intersections between these curves
lie in $\D$.

Then $j_m(c_1,\dots,c_m)\cap e^{\tup}=1$ and for all
other tuples $\tup'$ of the form $(0,\uu',\uv)$ we have instead $j_m(c_1,\dots,c_m)\cap e^{\tup'}=0$.

This shows that $\psi_m(c_1,\dots,c_m)=[c_1]\cdot\ldots\cdot[c_m]\in\Sym_m(\H)$, which is
one of the generating monomials.
\end{proof}

Theorem \ref{thm:Hbms*as*ggrep} is now proved in all bigradings of the form $(m,m)$.

\subsection{General bigradings \texorpdfstring{$(m-l,m)$}{(m-l,m)}.} Fix $0\leq l\leq m$ for the whole subsection: our next aim is to prove Theorem
\ref{thm:Hbms*as*ggrep} for the bigrading $(m-l,m)$.

For all $0\leq p\leq m$, the group $\Diff(\S;\partial\S\cup\D)$ acts both on $C_p(\D)\times C_{m-p}(\S')$
and on $\cms$, and the map $\mu$ is equivariant with respect to this action
(see Definition \ref{defn:universalSbundle});
hence, using the K\"{u}nneth formula, there is an induced $\gg$-equivariant map in homology
\begin{equation}
 \label{eq:mu*}
 \mu_*\colon H_{p-l}\pa{C_p(\D)}\otimes H_{m-p}\pa{C_{m-p}(\S')}\to H_{m-l}\pa{\cms}.
\end{equation}

Note that $H_{p-l}\pa{C_p(\D)}\otimes H_{m-p}\pa{C_{m-p}(\S')}$
is the tensor product of the trivial representation $H_{p-l}\pa{C_p(\D)}$, and
of the representation $H_{m-p}\pa{C_{m-p}(\S')}$, which by the results of the previous
section is isomorphic to the symplectic representation $\Sym_{m-p}(\H)$.

We will prove the following lemma, from which Theorem \ref{thm:Hbms*as*ggrep} follows:
\begin{lem}
 \label{lem:oplussplitting}
For all $l\leq p\leq m$ the map $\mu_*$ in equation \eqref{eq:mu*} is injective, and the collection
of all these maps yields a splitting
\begin{equation}
  \label{eq:musplitting}
 H_{m-l}(\cms)=\bigoplus_{p=l}^m H_{p-l}\pa{C_p(\D)}\otimes H_{m-p}\pa{C_{m-p}(\S')}.
\end{equation}
\end{lem}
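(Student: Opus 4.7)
The plan is to combine a dimension count with a surjectivity argument based on pairing with the dual basis of generalised symmetric chains. Since each $\mu_*$ is $\gg$-equivariant (the map $\mu$ being $\Diff(\S;\partial\S\cup\D)$-equivariant), the resulting splitting will automatically be $\gg$-equivariant.

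First I would set up the dimension count. Applying Subsection 4.2 to the surface $\S' \cong \Sigma_{g,1}$ identifies $H_{m-p}(C_{m-p}(\S')) \cong \Sym_{m-p}(\H)$ as $\gg$-representations. Together with Cohen's computation \eqref{eq:Cohen} of $H_{p-l}(C_p(\D))$, the right-hand side of \eqref{eq:musplitting} acquires a $\Z_2$-basis indexed by triples $(\ualpha, \uu, \uv)$ with $\sum \alpha_j = l$ and $\sum \alpha_j 2^j + \sum (u_i + v_i) = m$ (with $p = \sum \alpha_j 2^j$ determining the summand). By the vector space isomorphism \eqref{eq:isovectorspaces} this equals $\dim H_{m-l}(\cms)$, so it suffices to show that $\bigoplus_{p} \mu_*$ is surjective; injectivity of each individual $\mu_*$ and the directness of the sum then follow for free.

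For surjectivity, I would use Lemma \ref{lem:gensymchain} and Poincaré-Lefschetz duality: the classes $[\kappa(p,\ualpha,\uu,\uv)]$ form a basis of $\tH_{m+l}(\cms^\infty) \cong H^{m-l}(\cms)$, whose duals under algebraic intersection form a basis of $H_{m-l}(\cms)$. For each such class I would propose the preimage
\[
 a_{p,\ualpha,\uu,\uv} \;:=\; (Q^{\ualpha}\epsilon)\otimes\psi_{m-p}(\u^{\uu}\v^{\uv}) \;\in\; H_{p-l}(C_p(\D))\otimes H_{m-p}(C_{m-p}(\S'))
\]
in the $p$-summand, and claim that $\mu_*(a_{p,\ualpha,\uu,\uv})$ pairs to $1$ with $[\kappa(p,\ualpha,\uu,\uv)]$ and to $0$ with all other basis classes; that suffices, since we will have then hit a basis of $H_{m-l}(\cms)$.

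To verify the pairing I would choose cycle representatives of controlled support: a Fuchs-type cycle for $Q^{\ualpha}\epsilon$ living strictly inside $\D$, and a cycle $\tpsi_{m-p}(c_1,\ldots,c_{m-p})$ from Lemma \ref{lem:tpsiwithproperties} on curves $c_i$ picked inside $\S'$ (say, parallel copies of the $\u_i,\v_i$ representatives that are disjoint from $\D$). The image under $\mu_*$ is then supported on configurations having exactly $p$ points in $\D$ and $m-p$ points near the $c_i$'s. I expect the intersection with $[\kappa(p',\ualpha',\uu',\uv')]$ to factor as a product of a ``disc pairing'' (nonzero iff $p' = p$ and $\ualpha' = \ualpha$, by Fuchs's pairing in $C_p(\D)^\infty$) and a ``surface pairing'' (nonzero iff $(\uu',\uv') = (\uu,\uv)$, by the computation in Lemma \ref{lem:tpsi->psi}). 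The main obstacle will be justifying this product decomposition rigorously: vertical lines inside the cells $e^{\tup'}$ making up $\kappa(p',\ualpha',\uu',\uv')$ may pass through $\D$, so the two factor-intersections are not a priori geometrically separated in $\cms$. This is managed by the placement of $\D$ away from the $\overline{\U}_i$'s and $\overline{\V}_i$'s (Definition \ref{defn:DinTS}), which ensures that points of the support of $\mu_*(a)$ cannot simultaneously lie on $\D$ and on a handle, so the $p$ ``disc points'' and the $m-p$ ``handle points'' of a tuple in $\kappa$ contribute independently to the intersection, reducing everything to the two already-known pairings.
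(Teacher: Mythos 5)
Your overall strategy (dimension count plus surjectivity via pairing against the dual basis of generalised symmetric chains) is exactly the paper's, and your reduction to the statement that the classes $\mu_*(a_{p,\ualpha,\uu,\uv})$ hit a basis is correct. But the central claim you make to prove surjectivity --- that $\mu_*(a_{p,\ualpha,\uu,\uv})$ pairs to $1$ with $[\kappa(p,\ualpha,\uu,\uv)]$ and to $0$ with \emph{all other} basis classes, i.e.\ that the intersection matrix is the identity --- is false, and the paper disproves it with an explicit counterexample immediately after the proof of Lemma~\ref{lem:oplussplitting}. With $g=1$, $m=2$, $p=1$, $[a]=\epsilon$, $[b]=\u_1$ and $\tup'=(1,(2),(0),(0))$ of norm $p'=2>p$, the cycle $\mu_*([a]\otimes[b])$ meets $e^{\tup'}$ transversely in one point, so the pairing is $1$ even though $(p',\ualpha',\uu',\uv')\neq(p,\ualpha,\uu,\uv)$.

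The reason the ``product decomposition'' breaks is the following. You note the danger that vertical lines inside $e^{\tup'}$ may pass through $\D$, and argue that the placement of $\D$ away from the $\overline{\U}_i$'s and $\overline{\V}_i$'s handles it. But the real obstruction goes the other way: the $m-p$ surface points of $\mu_*([a]\otimes[b])$ run along curves $c_i\subset\S'$, and these curves necessarily traverse the open square $]0,1[^2$ on their way to the handles. Hence some of the $m-p$ surface points can lie on a vertical line $\{s\}\times]0,1[$ at the same time as all $p$ disc points do, producing a configuration in a cell $e^{\tup'}$ of norm $p'>p$. So for $p'>p$ the disc pairing and surface pairing do not decouple, and the intersection number need not vanish. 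What \emph{is} true --- and is what the paper proves, via the observation that $\mu^{\infty}$ restricted to the subcomplex $F_p\cms^{\infty}$ of cells of norm $\leq p$ is cellular --- is that the pairing vanishes whenever $p'<p$, and when $p'=p$ it equals the product of the Fuchs pairing in $C_p(\D)$ and the pairing from Lemma~\ref{lem:tpsi->psi} in $C_{m-p}(\S')$, which is $\delta_{\ualpha\ualpha'}\delta_{\uu\uu'}\delta_{\uv\uv'}$. This yields an \emph{upper-triangular} matrix with $1$'s on the diagonal once the index strings are ordered by weakly increasing $p$; that is invertible, which still finishes your argument, but the diagonality you assert is not available and cannot be recovered by the geometric separation you invoke.
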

\begin{proof}
Note that the statement of the lemma does not depend on the the action of $\gg$:
we have a map from the right-hand side to the left-hand side of equation \eqref{eq:musplitting},
we already know that it is $\gg$-equivariant,
we only need to show that it is a linear isomorphism. Note also that Lemma \ref{lem:gensymchain}
implies that the two vector spaces have the same dimension.

Fix $l\leq p\leq m$ and $\ualpha=(\alpha_j)_{j\geq 0}$, and let $[a]=Q^{\ualpha}\epsilon=\prod_{j=0}^{\infty}(Q^j\epsilon)^{\alpha_j}$ be a generator of
$H_{p-l}(C_p(\D))$, hence $l=\sum_j\alpha_j$ and $p=\sum_j\alpha_j2^j$.

Fix also $\uu=(u_1,\dots, u_g)$ and $\uv=(v_1,\dots,v_g)$,
and let $[b]=\u^{\uu}\cdot\v^{\uv}=\prod_{i=1}^g (\u_i^{u_i}\v_i^{v_i})$ be a generator of $H_{m-p}(C_{m-p}(\S'))$, using the
isomorphism proved in the previous subsection, with $(m-p)=\sum_i(u_i+v_i)$.

Here $a$ and
$b$ are chosen singular cycles representing the homology classes, with $a$ supported on $\D$
and $b$ supported on $\S'$. See Figure \ref{fig:axb}

\begin{figure}[ht]\centering
 \includegraphics[scale=0.65]{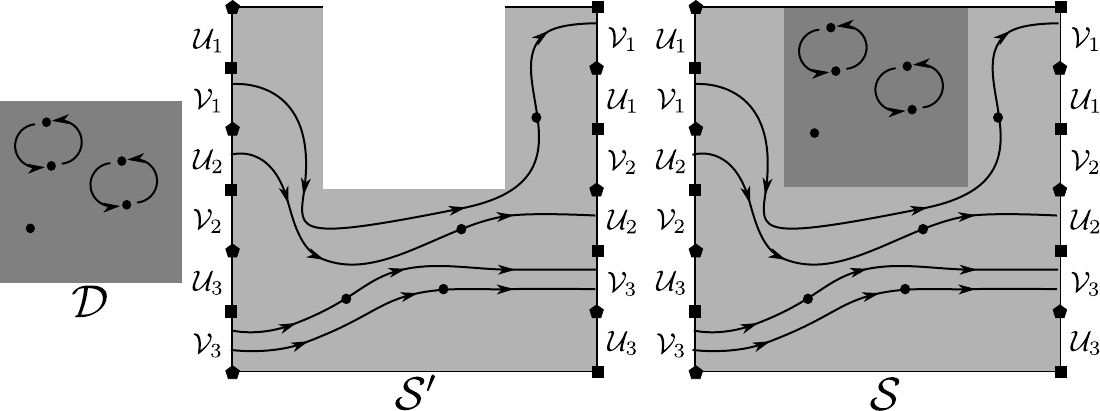}
 \caption{From left to right, the class $[a]=\epsilon\cdot(Q\epsilon)^2\in H_2(C_5(\D))$;
 the class $[b]=\v_1\cdot\u_2\cdot\v_3^2\in H_4(C_4(\S'))$; and the product class
 $\mu_*([a]\otimes[b])\in H_6(C_9(\S))$.}
\label{fig:axb}
\end{figure}

Then $[a]\otimes [b]$ is a generator of $H_{p-l}(C_p(\D))\otimes H_{m-p}(C_{m-p}(\S'))$,
by the K\"unneth formula, and we are interested in the homology class $\mu_*([a]\otimes [b])$.

There is one such
class for any choice of $[a]$ and $[b]$ as above, that is, for any choice of
$p$, $\ualpha$, $\uu$ and $\uv$ satisfying the conditions
$l=\sum_{j\geq 0}\alpha_j$, $p=\sum_{j\geq 0}\alpha_j2^j$ and $(m-p)=\sum_{i=1}^g(u_i+v_i)$, where
we use the notation above.

We want to show that the collection of all the corresponding classes of the form $\mu_*([a]\otimes [b])$
gives a basis for $H_{m-l}(\cms)$.

We will study
the intersection of $\mu_*([a]\otimes [b])$ with cohomology classes of $\cms$ represented by
generalised symmetric chains in $\cms^{\infty}$.

To compute the algebraic intersection
between $\mu_*([a]\otimes [b])$ and $[\kappa(p',\ualpha',\uu',\uv')]$ we
consider the map
\[
 \mu^{\infty}\colon \cms^{\infty}\to \pa{C_p(\D)\times C_{m-p}(\S')}^{\infty}
\]
which collapses to $\infty$ the complement in $\cms^{\infty}$ of the open submanifold $C_p(\D)\times C_{m-p}(\S')$.

By Poincaré-Lefschetz duality, the map $\mu^{\infty}_*$ in reduced homology corresponds to the cohomology map
\[
 \mu^*\colon H^*(\cms)\to H^*(C_p(\D)\times C_{m-p}(\S'))=H^*(C_p(\D))\otimes H^*(C_{m-p}(\S')).
\]

We give $\pa{C_p(\D)\times C_{m-p}(\S')}^{\infty}$ the cell complex structure of the smash product
$C_p(\D)^{\infty}\wedge C_{m-p}(\S')^{\infty}$. Here $C_p(\D)^{\infty}$ is given the cell structure of $C_p((0,1)^2)^{\infty}$
coming from the natural identification $\D=]1/4,3/4[\times]1/2,1[\cong]0,1[^2$, which is obtained by rescaling
and translating. Moreover we choose any diffeomorphism $\mrS'\cong\mrS$ that restricts to the identity
on all $\U_i$'s and $\V_i$'s, and give $C_{m-p}(\S')^{\infty}$ the cell structure of $C_{m-p}(\S)^{\infty}$.

Recall that $\cms^{\infty}$ can be filtered according to the norm of cells: a cell $e^{\tup}$ associated with
the tuple $\tup=(l,\ux,\uu,\uv)$ has norm $\sum_{i=1}^l x_i$, and the norm is weakly decreasing along boundaries.
In the previous section we just considered
the associated filtration of the reduced chain complex $\tCh_*(\cms^{\infty})$, whereas now we
consider the closed subcomplex $F_p\cms^{\infty}\subset\cms^{\infty}$, which is the union
of all cells of norm $\leq p$.

The crucial observation is that $\mu^{\infty}$ restricts to a cellular map 
\[
F_p\cms^{\infty}\to \pa{C_p(\D)\times C_{m-p}(\S')}^{\infty}.
\]

To see this, fix a tuple $\tilde{\tup}=(\tilde{l}, \tilde{\ux},\tilde{\uu},\tilde{\uv})$ of norm $\tilde{p}\leq p$
and of dimension $\tilde{l}+m$, and
consider the open cell
cell $e^{\tilde{\tup}}\subset F_p\cms^{\infty}$.

If $\tilde{p}<p$, then
$e^{\tilde{\tup}}\cap (C_p(\D)\times C_{m-p}(\S'))$ is empty. If $\tilde{p}=p$, then
\[
e^{\tilde{\tup}}\cap (C_p(\D)\times C_{m-p}(\S'))=e^{\tup'}\times e^{\tup''},
\]
where $\tup'=(\tilde{l},\tilde{\ux})$ and $\tup''=(0,\tilde{\uu},\tilde{\uv})$.

Therefore
$\mu^{\infty}(e^{\tilde{\tup}})$ is $\set{\infty}$ in the first case, and in the second case it
is contained in the union $\set{\infty}\cup e^{\tup'}\times e^{\tup''}$, which is also
contained in the $(\tilde{l}+m)$-skeleton of $\pa{C_p(\D)\times C_{m-p}(\S')}^{\infty}$.

Consider now the generalised symmetric chain $\kappa(p',\ualpha',\uu',\uv')$ representing
a class in $\tH_{m+l}(\cms^{\infty})=H^{m-l}(\cms)$, with $\ualpha'=(\alpha'_j)_{j\geq 0}$,
$\uu'=(u_1,\dots,u'_g)$ and $\uv'=(v'_1,\dots,v'_g)$; in particular $l=\sum_{j\geq 0}\alpha'_j$.
Suppose moreover $p'\leq p$.

If $p'<p$, the previous argument shows that $\mu^{\infty}_*\pa{\kappa(p',\ualpha',\uu',\uv')}=0$
in the reduced cellular chain complex,
and in particular the corresponding homology class is mapped to zero.

Suppose now $p'=p$: then the previous argument shows that
the homology class $[\kappa(p',\ualpha',\uu',\uv')]\in \tH_{m+l}(\cms^{\infty})$
is mapped along $\mu^{\infty}_*$ to the class
\[
[\kappa(\ualpha')]\otimes [\kappa(\uu',\uv')]\in\tH\pa{C_p(\D)^{\infty}\wedge C_{m-p}(\S')^{\infty}}.
\]
Indeed each tuple $\tup$ in the cycle $\kappa(p',\ualpha',\uu',\uv')|$ is mapped by $\mu^{\infty}_*$
to a corresponding pair of tuples $\tup'\otimes\tup''$ in the cycle $\kappa(\ualpha')\otimes \kappa(\uu',\uv')$,
so even at the level of chains we have
\[
\mu^{\infty}_*\pa{\kappa(p',\ualpha',\uu',\uv')}=\kappa(\ualpha')\otimes \kappa(\uu',\uv').
\]

We can now compute the algebraic intersection of $\mu_*([a]\otimes [b])$ with
the cohomology class $[\kappa(p',\ualpha',\uu',\uv')]$ as the algebraic intersection between
$[a]\otimes [b]$ and $\mu^{\infty}_*\pa{[\kappa(p',\ualpha',\uu',\uv')]}$.

For $p'<p$ the previous argument show that this intersection is zero.

For $p'=p$ the intersection between
$[a]\otimes [b]$ and $\mu^{\infty}_*([\kappa(p,\ualpha',\uu',\uv')])=[\kappa(\ualpha')]\otimes [\kappa(\uu',\uv')]$
is $1\in\Z_2$ exactly when $\ualpha=\ualpha'$, $\uu=\uu'$ and $\uv=\uv'$; otherwise it is $0$.

To finish the proof we consider the collection of all strings of the form
\[
\pa{p,\ualpha=(\alpha_j)_{j\geq 0},\uu=(u_1,\dots,u_g),\uv=(v_1,\dots,v_g)}
\]
satisfying $l=\sum_j\alpha_j$,$p=\sum_j \alpha_j2^j$ and $(m-p)=\sum_i(u_i+v_i)$;
we choose a total order on the set of these strings,
such that the parameter $p$ is weakly increasing along this order; we associate
to each string its corresponding class in $H_{m-l}(\cms)$ of the form $\mu_*([a]\otimes [b])$ and its
corresponding class $[\kappa(p,\ualpha',\uu',\uv')]\in\tH_{m+l}(\cms^{\infty})$.

Then the matrix of algebraic intersections between these two sets of
classes is an upper-triangular matrix
with $1$'s on the diagonal, and in particular
it is invertible. This shows that the set of classes of the form $\mu_*([a]\otimes [b])$
is a basis for $H_{m-l}(\cms)$.
\end{proof}

One could expect that the basis given by classes of the form $[a]\otimes [b]\in H_{m-l}(\cms)$
is also \emph{dual} to the basis of classes $[\kappa(p,\ualpha,\uu,\uv)]\in \tH_{m+l}(\cms^{\infty})$, i.e., the
matrix considered in the end of the previous proof is not only upper-triangular but also
diagonal. This is however not true, as the following example shows.

Let $g=1$, $m=2$, $p=1$, $p'=2$ and consider the classes $[a]=\epsilon\in H_0(C_1(\D))$,
$[b]=\u_1\in \H=H_1(C_1(\S'))$. Moreover let the generalised symmetric chain
$\kappa(p',\ualpha',\uu',\uv')$ be defined by
$\ualpha'=(\alpha'_j)_{j\geq 0}$ with $\alpha'_1=1$ and all other $\alpha_j=0$, $\uu'=(u'_1=0)$
and $\uv'=(v'_1=0)$.

Represent $[a]$ by a point in $a\in\D$, for example
the point $(1/2,3/4)$; represent $[b]$ by a simple closed curve $b\subset\S'$
that intersects only once, transversely, the vertical segment passing through $a$, i.e.
$\set{1/2}\times]0,1[$. See Figure \ref{fig:counterexample}.

\begin{figure}[ht]\centering
 \includegraphics[scale=0.6]{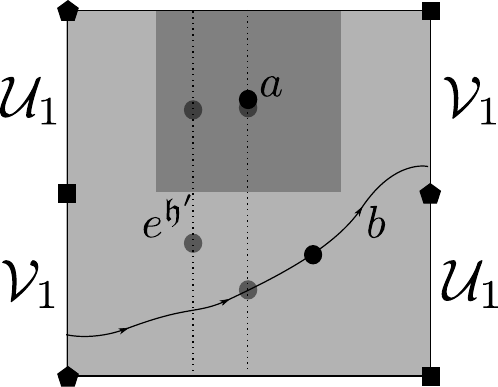}
 \caption{$\mu_*(a\otimes b)$ and $e^{\tup'}$ intersect once, transversely inside $C_2(\S)$.}
\label{fig:counterexample}
\end{figure}

Then the cycle $\kappa(p',\ualpha',\uu',\uv')$ consists uniquely of one tuple
\[
\tup'=(1,\ux'=(x'_1=2),\uu'=(u'_1=0),\uv'=(v'_1=0)).
\]
The corresponding cell $e^{\tup'}$ intersects once, transversely,
the cycle $\mu_*(a\otimes b)$, which is represented by the curve of configurations of two
points in $\mrS$, one of which is fixed at $a$ whereas the other runs along $b$:
there is exactly one position on $b$ lying under $a$.

Hence the algebraic intersection between these two classes is $1$, and since $p'>p$
this is an entry strictly above the diagonal in the matrix considered in the proof
of Lemma \ref{lem:oplussplitting}.

The proof of Theorem \ref{thm:Hbms*as*ggrep} can be easily generalised to surfaces
with more than one boundary curve. Let $\Sigma_{g,n}$ be a surface of genus
$g$ with $n\geq 1$ parametrised boundary curves and let $\Gamma_{g,n}$ be
the group of connected components of the topological group
$\Diff(\Sigma_{g,n};\partial\Sigma_{g,n})$:
then there is an isomorphism of bigraded $\Z_2$-representations
\begin{equation}
\label{eq:sgn}
\bigoplus_{m\geq 0}H_*\pa{C_m(\Sigma_{g,n})}\simeq \Z_2\left[Q^j\epsilon\,|\, j\geq 0\right]\otimes\Sym_{\bullet}(H_1(\Sigma_{g,n})),
\end{equation}

where the action of $\Gamma_{g,n}$ on the right-hand side is induced by the natural
action on $H_1(\Sigma_{g,n})$.

For $n\geq 2$ the intersection form on the vector space $H_1(\Sigma_{g,n})$
is \emph{degenerate}, but it is still invariant under the action of
$\Gamma_{g,n}$, so there is still a map from $\Gamma_{g,n}$ to the subgroup
of $GL_{2g+n-1}(\Z_2)$ fixing this bilinear form, and in this sense we can say
that the representation in \eqref{eq:sgn} is \emph{symplectic}.

The proof of the isomorphism \eqref{eq:sgn} is almost verbatim the same; the main difference is in the construction of the
model $\T(\Sigma_{g,n})$ for $\mathring{\Sigma}_{g,n}$: we divide the
vertical segments $\set{0,1}\times[0,1]\subset[0,1]^2$
into $2g+n-1$ equal parts, that we call $I_i^l$ and $I_i^r$ according to their order;
we identify, for each $i>2g$, the interval $I_i^l$ with the interval $I_i^r$;
the other couples of intervals, yielding the genus, are identified just as before.

One can further generalise to non-orientable surfaces with non-empty boundary: it suffices,
in the above construction, to glue some of the intervals $I_i^l$ and $I_i^r$ reversing their
orientation. We leave all details of these generalisations to the interested reader.

\section{Proof of Theorem \ref{thm:main}}
\label{sec:sseq}
We will prove Theorem \ref{thm:main} by induction on $m$.
The case $m=0$ is trivial.

For all $m\geq 0$ we let $E(m)$ be the Leray-Serre spectral sequence associated with
the bundle \eqref{eq:BirmanbundleD}: its second page has the form
\[
 E(m)^2_{k,q}=H_k(B\Diff(\S;\partial\S\cup\D);H_q(\cms))=H_k(\gg;H_q(\cms)).
\]
From Theorem \ref{thm:Hbms*as*ggrep} we know that this spectral sequence is concentrated
on the rows $q=0,\dots, m$.

We want to prove the vanishing of all differentials appearing in the pages
$E(m)^r$ with $r\geq 2$; the $r$-th differential takes the form
\[
 \partial_r\colon E(m)^r_{k,q}\to E(m)^r_{k-r,q+r-1}.
\]
In particular any differential $\partial_r$ exiting from the row $q=m$ is trivial,
because it lands in a higher, hence trivial row.

Fix now $q=m-l<m$, in particular $l\geq 1$; by Theorem \ref{thm:Hbms*as*ggrep}, and
in particular by Lemma \ref{lem:oplussplitting}, we have a splitting of $H_k(\gg;H_{m-l}(\cms))$ as
\begin{equation}
\label{eq:Hksplitting}
 \bigoplus_{p=l}^m H_k\pa{\gg;\mu_*\pa{H_{p-l}(C_p(\D))\otimes H_{m-p}(C_{m-p}(\S'))}}.
\end{equation} 
We fix now $l\leq p\leq m$ and show the vanishing of all differentials $\partial_r$ exiting from the
summand with label $p$ in the previous equation.

Consider the map $\mu^{\cF}$ from Definition \ref{defn:muF} as a map of
bundles over the space $B\Diff(\S;\partial\S\cup\D)$:
\[
 \mu^{\cF}\colon C_p(\D)\times C_{m-p}(\cF_{\S'})\to C_m(\cF_{\S,\D}).
\]
Note that the first bundle $C_p(\D)\times C_{m-p}(\cF_{\S'})\to B\Diff(\S;\partial\S\cup\D)$
is the product of the space $C_p(\D)$ with the bundle $C_{m-p}(\cF_{\S'})\to B\Diff(\S;\partial\S\cup\D)$;
therefore the spectral sequence associated with $C_p(\D)\times C_{m-p}(\cF_{\S'})$ is isomorphic,
from the second page on, to the tensor product
of $H_*(C_p(\D))$ and the spectral sequence associated with the bundle $C_{m-p}(\cF_{\S'})$; the
latter spectral sequence is isomorphic, in our notation, to the spectral sequence $E(m-p)$.
In particular $\mu^{\cF}$ induces a map of spectral
sequences
\[
\mu^{\cF}_*\colon H_*(C_p(\D))\otimes E(m-p)\to E(m);
\]
that in the second page, on the $(m-l)$-th row and $k$-th column, restricts to
the inclusion of one of the direct summands in equation \eqref{eq:Hksplitting}:
\[
H_k\pa{\gg;\mu_*(H_{p-l}(C_p(\D))\otimes H_{m-p}(C_{m-p}(\S')))}\subset H_k\pa{\gg;H_{m-l}(\cms)}.
\]

In particular if we prove the vanishing of all differentials $\partial_r$
in the first spectral sequence, then also all differentials
$\partial_r$ exiting from this direct summand in the
second spectral sequence $E(m)$ must vanish.

The differentials in the spectral sequence $H_*(C_p(\D))\otimes E(m-p)$
are obtained by tensoring the identity of $H_*(C_p(\D))$ with the differentials
of the spectral sequence $E(m-p)$; as $p\geq l\geq 1$ we know by inductive hypothesis
that the latter vanish. Theorem \ref{thm:main} is proved.

One can generalise Theorem \ref{thm:main} to orientable or non-orientable
surfaces with non-empty boundary, following the generalisation
of Theorem \ref{thm:Hbms*as*ggrep} discussed at the end of Section \ref{sec:Actiongg}.

\section{Comparison with the work of B\"{o}digheimer and Tillmann}
\label{sec:comparison}
In this section we will compare Theorems \ref{thm:main} and \ref{thm:Hbms*as*ggrep}
with the results in \cite{BoT}. In particular we consider the following two theorems, that
we formulate in an equivalent, but different way as in \cite{BoT}, fitting in our
framework. In the entire section homology is taken with coefficients in $\Z_2$ unless explicitly
stated otherwise.
\begin{thm}[Corollary 1.2 in \cite{BoT}]
\label{thm:BoTone}
Let $\F$ be a field, and let $m\geq 0$, $g\geq 0$. Then the following graded vector spaces are isomorphic in degrees $*\leq \frac 23 g-\frac 23$
\[
 H_*\pa{\ggm;\F}\cong H_*\pa{\gg;\F}\otimes_{\F} H_*\pa{\mathfrak{S}_m;\F[x_1,\dots,x_m]}.
\]
Here each variable $x_1,\dots,x_m$ has degree 2 and the symmetric group $\mathfrak{S}_m$ acts on the polynomial
ring $\F[x_1,\dots,x_m]$ by permuting the variables.
\end{thm}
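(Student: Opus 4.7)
My approach would combine homological stability for mapping class groups with a stable-range identification of $H_*(B\ggm;\F)$ in terms of an infinite loop space, in the Madsen-Weiss style. In contrast to the $\Z_2$ argument behind Theorem \ref{thm:main}, where the Birman spectral sequence collapses exactly at every finite genus, for arbitrary $\F$ one cannot hope for such collapse (as the paper discusses for $\Q$ in Section \ref{sec:rational}); instead, one trades an exact statement for a stable-range statement, paying for the generality of $\F$ by restricting to $*\leq \tfrac{2}{3}g-\tfrac{2}{3}$.

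First, I would invoke (or establish) homological stability for $\ggm$ in $g$ in the stated range, paralleling Boldsen's improved Harer stability for $\gg$; this reduces the problem to computing the stable limit $H_*(B\gg^m_\infty;\F)$ with $\gg^m_\infty:=\colim_g \ggm$. Second, I would identify $(B\gg^m_\infty)^+$ with an infinite loop space, following the Madsen-Weiss paradigm adapted to punctures. The canonical equivalence $B\gg_\infty^+\simeq \Omega^\infty_0 MTSO(2)$ should upgrade, for $\ggm$, to an equivalence whose target factors as $\Omega^\infty_0 MTSO(2)$ times a $\mathfrak{S}_m$-homotopy-orbit construction on $\C P^\infty$, the latter encoding each puncture's tangent direction at the marked point together with the permutation action on the $m$ punctures. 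Third, the $\F$-homology of $(\C P^\infty)^m_{h\mathfrak{S}_m}$ is precisely $H_*(\mathfrak{S}_m;H_*((\C P^\infty)^m;\F))=H_*(\mathfrak{S}_m;\F[x_1,\dots,x_m])$ via the standard Borel-construction spectral sequence (degenerating because $\C P^\infty$ has (co)homology concentrated in even degrees). Fourth, the product splitting of the infinite loop space yields the desired tensor-product splitting on $\F$-homology, and stability descends the result to finite $g$ in the given range.

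The main obstacle is Step 2, namely the stable identification of $(B\gg^m_\infty)^+$ as a \emph{product} rather than merely a fibration of infinite loop spaces; this is where \cite{BoT} performs the substantive work, constructing the relevant Madsen-Tillmann-style spectrum and verifying its splitting. A secondary difficulty lies in Step 1: one must ensure stability in $g$ for $\ggm$ with a range matching Boldsen's sharp range for $\gg$, which requires a careful analysis of semi-simplicial resolutions by arc systems that are equivariant with respect to the puncture permutations. Once these two pieces are in hand, the remaining identifications are formal.
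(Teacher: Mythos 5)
The paper does not prove this theorem: it is quoted from \cite{BoT} as their Corollary 1.2 and used in Section \ref{sec:comparison} only for comparison with Theorems \ref{thm:main} and \ref{thm:Hbms*as*ggrep}. There is therefore no in-paper proof against which to measure your sketch; it should rather be read as a reconstruction of the B\"odigheimer--Tillmann argument.

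On its own terms your plan is a reasonable \emph{post-Madsen--Weiss} reading, but it misdescribes where \cite{BoT} does the work: that paper predates Madsen--Weiss, so no ``Madsen--Tillmann-style spectrum'' is constructed there. The actual argument combines Tillmann's earlier theorem that $\Z\times B\Gamma_{\infty,1}^+$ is an infinite loop space with a group-completion and stable-splitting argument for labelled configuration spaces (the ``stripping and splitting'' of the title), rather than a scanning/cobordism-category identification. Your route is cleaner to a modern reader, but it is a genuinely different derivation than the one cited. You also need to repair Step~3: evenness of $H_*(\C P^\infty;\F)$ alone does \emph{not} force the Serre spectral sequence of $(\C P^\infty)^m_{h\fS_m}\to B\fS_m$ to collapse --- parity kills only the even differentials $d_2,d_4,\dots$, while $d_3,d_5,\dots$ shift the fibre degree by an even amount and are not excluded. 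The correct reason is sharper: $(\C P^\infty)^m$ admits an $\fS_m$-equivariant CW structure with all cells in even dimensions (products of the standard even cells of $\C P^\infty$), so the equivariant cellular chain complex $C^{\mathrm{cell}}_*$ has vanishing differential and agrees degreewise with $H_*\pa{(\C P^\infty)^m;\F}$; the Borel homology is then the hyperhomology $\mathbb{H}_*\pa{\fS_m;C^{\mathrm{cell}}_*}$, and for a complex with trivial differential this is $\bigoplus_q H_{*-q}\pa{\fS_m;C^{\mathrm{cell}}_q}\cong H_*\pa{\fS_m;\F[x_1,\dots,x_m]}$. With these repairs, and granting the substantial input of Step~2 (including homological stability for $\ggm$ in $g$ in the stated range), your outline should yield the statement.
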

We point out that the range of degrees $*\leq \frac 23 g-\frac 23$ is the stable range for the homology $H_*(\gg;\F)$
of the mapping class group, see \cite{Harer} for the original stability theorem and \cite{Boldsen, ORW:resolutions_homstab} for the improved
stability range.
\begin{thm}[Theorem 1.3 in \cite{BoT}]
\label{thm:BoTtwo}
For all $m\geq 0$ the map $\mu^{\cF}\colon C_1(\D)\times C_m(\cF_{\S'})\to C_{m+1}(\cF_{\S,\D})$ from Definition \ref{defn:muF}
induces a split-injective map in homology
\[
 \mu^{\cF}_*\colon H_*\pa{C_m(\cF_{\S'})}\cong H_0\pa{C_1(\D)}\otimes_{\F} H_*\pa{C_m(\cF_{\S'})}\hookrightarrow H_*\pa{C_{m+1}(\cF_{\S,\D})}.
\]
In other words, the inclusion of groups $\ggm\hookrightarrow\gg^{m+1}$ given by adding a puncture near the boundary induces a split
injective map $H_*(\ggm;\F)\hookrightarrow H_*(\gg^{m+1};\F)$.
\end{thm}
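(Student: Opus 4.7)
The plan is to exploit the fact that $\mu^{\cF}$ is a map of bundles over $B\Diff(\S;\partial\S\cup\D)\simeq B\gg$, and to combine the spectral sequence collapse from Theorem \ref{thm:main} with the explicit fiber-homology description from Theorem \ref{thm:Hbms*as*ggrep}.

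First, I would observe that $\mu^{\cF}$ induces a map between the associated Leray--Serre spectral sequences. Since $C_1(\D)$ is contractible, the $E^2$-page on the source side reduces via K\"unneth to $H_k(\gg;H_q(C_m(\S')))$, and on the target side we have $H_k(\gg;H_q(C_{m+1}(\S)))$. The induced map on $E^2$-pages comes from the fiber-level map $\mu_*\colon H_*(C_m(\S'))\to H_*(C_{m+1}(\S))$, which geometrically adjoins a fixed point of $\D$ to a configuration in $\S'$.

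The main step is to show that this fiber map is a $\gg$-equivariant split injection. Under the isomorphism of Theorem \ref{thm:Hbms*as*ggrep}, the source and target correspond to the weight-$m$ and weight-$(m+1)$ pieces of $\Z_2[Q^j\epsilon]\otimes\Sym_\bullet(\H)$, and $\mu_*$ corresponds to multiplication by $\epsilon\in H_0(C_1(\D))$. This map is $\gg$-equivariant because $\gg$ acts trivially on $\Z_2[Q^j\epsilon]$, and it admits a $\gg$-invariant complement given by the monomials not divisible by $\epsilon$, namely the weight-$(m+1)$ piece of $\Z_2[Q^j\epsilon\mid j\geq 1]\otimes\Sym_\bullet(\H)$. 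The identification of $\mu_*$ with $\epsilon$-multiplication reflects the multiplicative structure implicit in Lemma \ref{lem:oplussplitting}, where for $p=1$ the map $\mu_*$ produces precisely the $\epsilon$-multiplication on the appropriate direct summands.

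Applying $H_k(\gg;-)$, which preserves split injections in the coefficient module, we obtain split injections on $E^2$-pages. By Theorem \ref{thm:main}, the target spectral sequence collapses at $E^2$; the source spectral sequence also collapses at $E^2$, since the Leray--Serre spectral sequence of $C_m(\cF_{\S'})\to B\gg$ is identified with $E(m)$ in the proof of Theorem \ref{thm:main} and thus inherits the collapse. The fiber-level split injection therefore lifts to a split injection on total homology, establishing the theorem. The main technical obstacle is the careful identification of $\mu_*$ with $\epsilon$-multiplication and the verification that the natural splitting of $\Z_2[Q^j\epsilon]\otimes\Sym_\bullet(\H)$ into $\epsilon$-divisible and $\epsilon$-free parts is preserved by the isomorphism of Theorem \ref{thm:Hbms*as*ggrep}.
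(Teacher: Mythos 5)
This statement (Theorem~\ref{thm:BoTtwo}) is not actually proved in the paper: it is labelled ``Theorem 1.3 in \cite{BoT}'' and is cited from B\"odigheimer and Tillmann for comparison purposes. The paper only indicates, in the discussion following Equation~\eqref{eq:epsilonsplitting}, that in the special case $\F=\Z_2$ its own Theorem~\ref{thm:mainreformulation} recovers (and strengthens) this statement, by exhibiting $H_*\pa{C_{\bullet}(\cF_{\S,\D})}$ as a free module over the larger ring $H_*(C_{\bullet}(\D))\cong\Z_2[Q^j\epsilon\mid j\geq 0]$ rather than merely over $\Z_2[\epsilon]$. Your argument is essentially this derivation spelled out, and as a derivation of the $\Z_2$ case from the paper's own results it is sound: the identification of the fiber-level map $\mu_*$ with $\epsilon$-multiplication and the existence of the $\gg$-invariant complement given by monomials with $\alpha_0=0$ both follow from Theorem~\ref{thm:Hbms*as*ggrep} (or more transparently from Theorem~\ref{thm:firstreformulation}); applying $H_k(\gg;-)$ preserves the direct-sum decomposition of coefficients; and since over a field an injection on associated graded pieces of the $E^\infty$-page yields an injection (hence split injection) on abutment, the collapse from Theorem~\ref{thm:main} completes the argument.

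The genuine gap is that Theorem~\ref{thm:BoTtwo} is stated for an arbitrary field $\F$, while every ingredient you invoke is specific to $\Z_2$: Theorem~\ref{thm:Hbms*as*ggrep} describes $H_*(\cms)$ as a symplectic $\gg$-representation only over $\Z_2$, and Theorem~\ref{thm:main} establishes collapse of the spectral sequence $E(m)$ only over $\Z_2$. Your route cannot be run for $\Q$ or for $\F_p$ with $p$ odd, since Section~\ref{sec:rational} exhibits a Torelli element acting non-trivially on $H_2(C_2(\S);\Q)$, so the fiber homology is not even a symplectic representation and the claimed $\gg$-invariant complement for $\epsilon$-multiplication would need a different justification; the paper moreover explicitly states that it does not know whether the analogue of Theorem~\ref{thm:main} holds away from characteristic $2$. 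B\"odigheimer and Tillmann's proof in \cite{BoT} must therefore proceed by a different, field-independent mechanism (roughly, a stabilisation/transfer argument for adding a puncture near the boundary), and your proposal does not supply such an argument for general $\F$.
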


We first reformulate Theorems \ref{thm:main} and \ref{thm:Hbms*as*ggrep} in a convenient way.
\begin{defn}
 \label{defn:Cbullet}
 We introduce some abbreviations for the following disjoint unions:
 \[
 C_{\bullet}(\D)=\coprod_{m\geq 0}C_m(\D);
 \]
 \[
 C_{\bullet}(\S)=\coprod_{m\geq 0}C_m(\S); 
 \]
 \[
 C_{\bullet}(\cF_{\S,\D})=\coprod_{m\geq 0}C_m(\cF_{\S,\D}).
 \]
\end{defn}
Then $C_{\bullet}(\D)$ is a (homotopy associative) topological monoid, with associated
Pontryagin ring $H_*\pa{C_{\bullet}(\D)}\cong\Z_2[Q^j\epsilon|j\geq 0]$ (see Equation \ref{eq:Cohen}).
This is a bigraded ring, where the two gradings are the homological degree and the weight.

The maps $\mu$ from Definition \ref{defn:muF} make $C_{\bullet}(\S)$ into a (homotopy associative)
module over the monoid $C_{\bullet}(\D)$; correspondingly $H_*\pa{C_{\bullet}(\S)}$ is a bigraded
module over $H_*\pa{C_{\bullet}(\D)}$. The structure of this module is described by the following
reformulation of Theorem \ref{thm:Hbms*as*ggrep}, see in particular the proof of Lemma \ref{lem:oplussplitting}.

\begin{thm}
 \label{thm:firstreformulation}
 There is an isomorphism of $\gg$-representations in (bigraded) modules over the ring $H_*\pa{C_{\bullet}(\D)}$
 \[
  H_*\pa{C_{\bullet}(\S)}\cong   H_*\pa{C_{\bullet}(\D)}\otimes\Sym_{\bullet}(\H).
 \]
\end{thm}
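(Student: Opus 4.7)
The plan is to view Theorem \ref{thm:firstreformulation} as a module-theoretic upgrade of Theorem \ref{thm:Hbms*as*ggrep}. Since the latter already establishes an isomorphism of bigraded $\gg$-representations of $\Z_2$-vector spaces between the two sides (using that $\Z_2[Q^j\epsilon\,|\,j\geq 0] = H_*(C_{\bullet}(\D))$ by equation \eqref{eq:Cohen} and that $\gg$ acts trivially on this factor), the only new content is to verify that this isomorphism respects the module structure over $H_*(C_{\bullet}(\D))$.

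First I would fix a model for the action of $C_{\bullet}(\D)$ on $C_{\bullet}(\S)$. I would pick a self-embedding $\phi\colon \S\hookrightarrow \S$ that restricts to an embedding of $\D$ into a proper sub-disc $\D'\subsetneq\D$ and that is isotopic to $\Id_{\S}$ rel $\partial\S$. For $\xi\in C_p(\D)$ and $\eta\in C_q(\S)$, the assignment $(\xi,\eta)\mapsto \xi\cup\phi(\eta)$ then defines a map $C_p(\D)\times C_q(\S)\to C_{p+q}(\S)$ whose homotopy class is independent of $\phi$; the resulting structure on $C_{\bullet}(\S)$ is homotopy associative and, on configurations whose $\S$-factor already lies in $\S'$, agrees up to homotopy with the map $\mu$ of Definition \ref{defn:muF}.

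Next I would recall that the isomorphism of Theorem \ref{thm:Hbms*as*ggrep} is, by the proof of Lemma \ref{lem:oplussplitting}, explicitly induced by the collection of maps $\mu_*$, combined with the identification $H_q(C_q(\S'))\cong \Sym_q(\H)$ (obtained by applying Lemma \ref{lem:tpsi->psi} to $\S'$, which is diffeomorphic to $\S$). The action of $H_*(C_{\bullet}(\D))$ on $H_*(C_{\bullet}(\D))\otimes \Sym_{\bullet}(\H)$ is multiplication on the first tensor factor, so the compatibility to check reduces to the associativity identity
\[
a\cdot \mu_*(b\otimes c) = \mu_*\pa{(a\cdot b)\otimes c}
\]
for $a,b\in H_*(C_{\bullet}(\D))$ and $c\in H_*(C_{\bullet}(\S'))$, where $a\cdot b$ denotes the Pontryagin product and the left-hand dot denotes the module action just described.

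The hard part, though routine, is this associativity at the space level. Both sides arise from two natural compositions $C_{p_1}(\D)\times C_{p_2}(\D)\times C_q(\S')\to C_{p_1+p_2+q}(\S)$: one first applies $\mu$ to $C_{p_2}(\D)\times C_q(\S')$ and then stacks in $C_{p_1}(\D)$ via $\phi$, while the other first multiplies the two $\D$-factors in $C_{\bullet}(\D)$ and then applies $\mu$. A homotopy between them is produced by interpolating between the two embeddings $\D\sqcup\D\hookrightarrow\D$ that realize the two stackings; such a homotopy exists because the space of embeddings of two disjoint discs into $\D$ is connected. With this associativity in hand, $\mu_*$ induces the claimed isomorphism of $H_*(C_{\bullet}(\D))$-modules, finishing the proof.
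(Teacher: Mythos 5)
Your proof is correct in outline and follows essentially the same approach as the paper, whose own treatment of Theorem~\ref{thm:firstreformulation} is a one-line pointer to Lemma~\ref{lem:oplussplitting}: the paper also relies on the fact that the isomorphism of Theorem~\ref{thm:Hbms*as*ggrep} is induced by the maps $\mu_*$, and leaves the compatibility with the $H_*(C_\bullet(\D))$-module structure implicit. You correctly identify that this compatibility reduces to the homotopy-associativity identity $a\cdot\mu_*(b\otimes c)=\mu_*\pa{(a\cdot b)\otimes c}$, checked at the space level.

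One technical point in your set-up needs repair. With $\phi\colon\S\hookrightarrow\S$ sending $\D$ into a proper sub-disc $\D'\subsetneq\D$, the assignment $(\xi,\eta)\mapsto \xi\cup\phi(\eta)$ is not a well-defined map $C_p(\D)\times C_q(\S)\to C_{p+q}(\S)$: the configuration $\xi$ ranges over all of $\D$, while $\phi(\eta)$ may also meet $\D$ (inside $\D'$), so collisions are possible. The standard model is to take $\phi$ with image contained in $\mathring{\S'}$ (so $\phi(\S)$ avoids $\D$ entirely), or equivalently to place $\xi$ only in a sub-disc of $\D$ that $\phi(\S)$ misses; either fix also makes the comparison with $\mu$ from Definition~\ref{defn:muF} immediate rather than merely ``up to homotopy.'' With this correction, the rest of your argument — in particular the verification of associativity via connectedness of the space of embeddings $\D\sqcup\D\hookrightarrow\D$ — goes through and matches the paper's intent.
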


Similarly, Theorem \ref{thm:main} can be reformulated as follows, considering at the same time all values of $m\geq 0$.
\begin{thm}
 \label{thm:mainreformulation}
 There is an isomorphism of (bigraded) modules over $H_*\pa{C_{\bullet}(\D)}$
 \[
 H_*\pa{C_{\bullet}(\cF_{\S,\D})}\cong   H_*\pa{C_{\bullet}(\D)}\otimes H_*\pa{\gg;\Sym_{\bullet}(\H)}.
 \] 
\end{thm}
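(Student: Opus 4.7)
The plan is to combine the two main theorems already established, Theorem \ref{thm:main} and its variant Theorem \ref{thm:firstreformulation}, and then track the module structure under the resulting isomorphism. First I would use Theorem \ref{thm:main}, which gives for each $m\geq 0$ an additive splitting
\[
H_*(C_m(\cF_{\S,\D})) \cong \bigoplus_{k+q=*} H_k(\gg; H_q(\cms)),
\]
and sum over $m\geq 0$, using that $H_*(\gg;-)$ commutes with direct sums in the coefficient $\gg$-module, to obtain
\[
H_*(C_\bullet(\cF_{\S,\D})) \cong H_*(\gg; H_*(C_\bullet(\S))).
\]

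Next I would plug in Theorem \ref{thm:firstreformulation} on the coefficients, rewriting the right-hand side as $H_*(\gg; H_*(C_\bullet(\D)) \otimes \Sym_\bullet(\H))$. Since by point (v) of Theorem \ref{thm:Hbms*as*ggrep} the action of $\gg$ on the factor $H_*(C_\bullet(\D))$ is trivial, this $\gg$-module is a direct sum of copies of $\Sym_\bullet(\H)$ indexed by a basis of $H_*(C_\bullet(\D))$; distributing group homology over the direct sum yields
\[
H_*(\gg; H_*(C_\bullet(\D)) \otimes \Sym_\bullet(\H)) \cong H_*(C_\bullet(\D)) \otimes H_*(\gg; \Sym_\bullet(\H)),
\]
which is the desired additive isomorphism.

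The delicate part, which I expect to be the main obstacle, is to verify that the composite isomorphism respects the $H_*(C_\bullet(\D))$-module structure. On the left the action is induced by the bundle map $\mu^{\cF}\colon C_p(\D) \times C_m(\cF_{\S,\D}) \to C_{m+p}(\cF_{\S,\D})$ of Definition \ref{defn:muF}. Since $C_p(\D)$ is viewed as a constant factor over $B\Diff(\S;\partial\S\cup\D)$, the map $\mu^{\cF}_*$ induces, on the $E^2$-page of the Leray–Serre spectral sequence, the tensor product of the identity of $H_*(C_p(\D))$ with the $\mu_*$-module action on the fiber homology $H_*(C_\bullet(\S))$; this is precisely the map of spectral sequences $H_*(C_p(\D)) \otimes E(m) \to E(m+p)$ already used in the proof of Theorem \ref{thm:main}. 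Under the identification of Theorem \ref{thm:firstreformulation}, this fiber action becomes multiplication on the first tensor factor of $H_*(C_\bullet(\D)) \otimes \Sym_\bullet(\H)$.

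Finally, because the $\gg$-action on $H_*(C_\bullet(\D))$ is trivial, multiplication by an element of $H_*(C_\bullet(\D))$ commutes with the process of extracting $H_*(\gg;-)$, so the pairing on coefficients translates into multiplication on the first tensor factor of $H_*(C_\bullet(\D)) \otimes H_*(\gg; \Sym_\bullet(\H))$. This matches the module structure on the target, and the theorem follows.
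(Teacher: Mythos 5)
Your proposal is correct and follows the paper's own strategy: it identifies the module structure on the $E^2$-page of $\bigoplus_m E(m)$ via the spectral-sequence map $\mu^{\cF}_*$, uses Theorem \ref{thm:firstreformulation} to rewrite the fiber action, and pulls the trivial $\gg$-action on $H_*(C_{\bullet}(\D))$ out of group homology. The one place where you move too quickly is the final assertion ``the theorem follows.'' What you have actually shown is that the $E^2=E^\infty$-page, equipped with the module structure coming from $\mu^{\cF}_*$, is the \emph{free} $H_*(C_{\bullet}(\D))$-module $H_*(C_{\bullet}(\D))\otimes H_*(\gg;\Sym_{\bullet}(\H))$. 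To pass from the associated graded $E^\infty$ to the abutment $H_*(C_{\bullet}(\cF_{\S,\D}))$ \emph{as modules} (rather than merely as bigraded vector spaces, which is the content of Theorem \ref{thm:main}), one must invoke this freeness: the filtration on $H_*(C_{\bullet}(\cF_{\S,\D}))$ is by $H_*(C_{\bullet}(\D))$-submodules and is finite in each bidegree, so a module basis of the free associated graded lifts to a module basis of the abutment. This is exactly the point the paper flags by emphasizing that the $E^2$-page is a \emph{free} module ``with the same description as above''; you should state it explicitly, since an arbitrary filtered module whose associated graded is free need not otherwise be isomorphic to its associated graded as a module.
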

The proof follows from the arguments used in Section \ref{sec:sseq}: we have actually shown that the direct
sum of the spectral sequences $\bigoplus_{m\geq 0} E(m)$ is itself a free module over $H_*\pa{C_{\bullet}(\D)}$
on the second page, with the same description as above.

By virtue of homological stability in $g$ for the sequences of groups $\pa{\gg}_{g\geq 0}$ and 
$\pa{\ggm}_{g\geq 0}$, Theorem \ref{thm:BoTone} can be equivalently rephrased as an isomorphism of graded vector spaces
\[
  H_*\pa{\Gamma_{\infty,1}^m;\F}\cong H_*\pa{\Gamma_{\infty,1};\F}\otimes_{\F} H_*\pa{\mathfrak{S}_m;\F[x_1,\dots,x_m]}.
\]
Here $\Gamma_{\infty,1}=\colim_{g\to\infty}\gg$ and $\Gamma_{\infty,1}^m=\colim_{g\to\infty}\ggm$.

Let $\Gamma_{\infty,1}^{\bullet}$ denote the disjoint union $\coprod_{m\geq 0}\Gamma_{\infty,1}^m$, which we
consider as a groupoid. Then Theorem \ref{thm:BoTone} is equivalent to the isomorphism of bigraded vector spaces
\[
  H_*\pa{\Gamma_{\infty,1}^{\bullet};\F}\cong H_*\pa{\Gamma_{\infty,1};\F}\otimes_{\F} \pa{\bigoplus_{m\geq 0} H_*\pa{\mathfrak{S}_m;\F[x_1,\dots,x_m]}}.
\]
We consider $\bigoplus_{m\geq 0} H_*\pa{\mathfrak{S}_m;\F[x_1,\dots,x_m]}$ as a free module over the Pontryagin ring
$H_*(\fS_{\bullet};\F)$, where $\fS_{\bullet}=\coprod_{m\geq 0}\fS_m$ and the Pontryagin product is induced by
the natural maps of groups $\fS_m\times\fS_{m'}\to\fS_{m+m'}$. This module structure is induced by the natural maps
\[
H_*\pa{\fS_m;\F[x_1,\dots,x_m]}\otimes_{\F} H_*\pa{\fS_{m'};\F}\to  H_*\pa{\fS_m\times\fS_{m'};\F[x_1,\dots,x_m]\otimes_{\F}\F} \to
\]
\[
\to H_*\pa{\fS_{m+m'};\F[x_1,\dots,x_{m+m'}]},
\]
where the first arrow is given by the homology cross product and the second by the aforementioned inclusion
$\fS_m\times\fS_{m'}\to\fS_{m+m'}$ together with the (equivariant) inclusion
of coefficients
\[
\F[x_1,\dots,x_m]\otimes_{\F}\F=\F[x_1,\dots,x_m]\subset \F[x_1,\dots,x_{m+m'}].
\]
From now on we assume $\F=\Z_2$.
Let $\beta_m=\beta_m(\D)$ denote the braid group on $m$ strands of the disc, and let $\beta_{\bullet}=\coprod_{m\geq 0}\beta_m$.
Then the natural projections $\beta_m\to\fS_m$ induce an \emph{inclusion} of Pontryagin rings
\[
 H_*(C_{\bullet}(\D))\cong H_*(\beta_{\bullet})\hookrightarrow H_*(\fS_{\bullet}).
\]
See again the appendix to \cite[Chap.III]{CLM} for a computation of $H_*(\beta_{\bullet})$,
and \cite{Nakaoka:infinite} or \cite{GiustiSalvatoreSinha} for a computation of $H_*(\fS_{\bullet})$.

Hence $\bigoplus_{m\geq 0} H_*\pa{\mathfrak{S}_m;\Z_2[x_1,\dots,x_m]}$ becomes a free module over $H_*(C_{\bullet}(\D))$.
Theorem \ref{thm:mainreformulation} gives then the following isomorphism, which turns out to be an isomorphism of free
$H_*(C_{\bullet}(\D))$-modules.
\begin{equation}
 \label{eq:doubleiso}
\begin{split}
  H_*\pa{\Gamma_{\infty,1}^{\bullet}} & \cong H_*(C_{\bullet}(\D))\otimes H_*\pa{\Gamma_{\infty,1};\Sym_{\bullet}(\H_{\infty})}\\
  & \cong H_*\pa{\Gamma_{\infty,1}}\otimes \pa{\bigoplus_{m\geq 0} H_*\pa{\mathfrak{S}_m;\Z_2[x_1,\dots,x_m]}}.
  \end{split}
\end{equation}
Here $\H_{\infty}=\colim_{g\to\infty} H_1(\Sigma_{g,1})$ is the first homology of the surface of infinite genus with one
boundary component.

Recall that $H_*\pa{\Gamma_{\infty,1}}$ is also a Pontryagin ring: there are natural maps of groups
$\Gamma_{g,1}\times\Gamma_{g',1}\to\Gamma_{g+g',1}$ inducing a Pontryagin product on the homology 
$H_*\pa{\Gamma_{\infty,1}}=\colim_{g\to\infty} H_*\pa{\gg}$.

Equation \ref{eq:doubleiso} gives
an isomorphism of modules over the ring
$H_*\pa{\Gamma_{\infty,1}}\otimes H_*(C_{\bullet}(\D))$. The action of the ring $H_*\pa{\Gamma_{\infty,1}}$
on $H_*\pa{\Gamma_{\infty,1}^{\bullet}}$ is induced on the colimit by 
the maps of groups $\gg\times\Gamma_{g',1}^m\to\Gamma_{g+g',1}^m$.
The corresponding action of $H_*\pa{\Gamma_{\infty,1}}$ on $H_*(C_{\bullet}(\D))\otimes H_*\pa{\Gamma_{\infty,1};\Sym_{\bullet}(\H)}$
comes from the action on $ H_*\pa{\Gamma_{\infty,1};\Sym_{\bullet}(\H)}$ which is induced on the colimit by the natural maps
\[
 H_*(\gg)\otimes H_*(\Gamma_{g',1};\Sym_m(H_1(\Sigma_{g',1})))\to H_*(\Gamma_{g+g',1};\Sym_m(H_1(\Sigma_{g+g',1}))).
\]

The right-hand side $H_*\pa{\Gamma_{\infty,1}}\otimes \pa{\bigoplus_{m\geq 0} H_*\pa{\mathfrak{S}_m;\Z_2[x_1,\dots,x_m]}}$
in Equation \ref{eq:doubleiso} is a free module over the ring $H_*\pa{\Gamma_{\infty,1}}\otimes H_*(C_{\bullet}(\D))$.
As a consequence we have that $H_*\pa{\Gamma_{\infty,1};\Sym_{\bullet}(\H_{\infty})}$ is a free module
over $H_*\pa{\Gamma_{\infty,1}}$; this last statement is of independent interest, and regards only the homology
of the group $\Gamma_{\infty,1}$ with twisted coefficients in $\Sym_{\bullet}(\H_{\infty})$.


We turn now to Theorem \ref{thm:BoTtwo}, and for the moment we assume again that $\F$ is a generic field and that the genus $g$
is fixed and finite. Considering
all values of $m$ at the same time, we can equivalently write
\begin{equation}
\label{eq:epsilonsplitting}
 H_*\pa{C_{\bullet}(\cF_{\S,\D});\F}\simeq \F[\epsilon]\otimes_{\F}\pa{\bigoplus_{m\geq 0} H_*\pa{C_m(\cF_{\S,\D}),C_{m-1}(\cF_{\S,\D})};\F}.
 \end{equation}

Here we use the convention $C_{-1}(\cF_{\S,\D})=\emptyset$, and we regard $C_{m-1}(\cF_{\S,\D})$ as a subspace of $C_m(\cF_{\S,\D})$ by using the map
$\mu^{\cF}$ in the statement of Theorem \ref{thm:BoTtwo}, with first input any fixed point $*\in C_1(\D)$. The class
$\epsilon$ is the canonical generator of $H_0(C_1(\D);\F)$.

In the case $\F=\Z_2$, Theorem \ref{thm:mainreformulation} improves Equation \ref{eq:epsilonsplitting} by exhibiting
$ H_*\pa{C_{\bullet}(\cF_{\S,\D})}$ as a free module over the ring $H_*(C_{\bullet}(\D))\cong\Z_2[Q^j\epsilon|j\geq 0]$,
rather than over its subring $\Z_2[\epsilon]$.

\section{A rational counterexample}
\label{sec:rational}
In this section we prove that a statement as in Theorem \ref{thm:Hbms*as*ggrep}
cannot hold if we consider homology with coefficients in $\Q$. We do not know
if the analogue of Theorem \ref{thm:main} holds in homology with coefficients in $\Q$
or in fields of odd characteristic.

We point out that $H_*(C_m(\Sigma_{g,1});\Q)$ has been computed \emph{as a bigraded $\Q$-vector space}
by B\"odigheimer, Cohen and Milgram in \cite{BCM}, and more recently by Knudson in \cite{Knudson}. A description of
these homology groups as a
$\gg$-representation seems to be still missing in the literature.

We will prove the following theorem:
\begin{thm}
 \label{thm:counterexample}
 Let $g\geq 2$ and $m\geq 2$; then $H_2(C_2(\Sigma_{g,1});\Q)$ is not a symplectic
 representation of $\gg$.
\end{thm}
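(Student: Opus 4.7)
The plan is to exhibit $H_2(C_2(\sg);\Q)$ as a natural extension of two irreducible symplectic $\gg$-representations, and to show that this extension does not split as a sequence of $\gg$-representations; because both pieces are distinct irreducibles of the reductive group $Sp_{2g}(\Q)$, this is equivalent to the desired conclusion. For the computation I would pass to the ordered configuration space $F_2(\sg)=\sg\times\sg\setminus\Delta$. The long exact sequence of the pair $(\sg\times\sg, F_2(\sg))$, combined with the Thom isomorphism for the normal bundle of the diagonal (trivial since $\sg$ is parallelisable) and the vanishing of $H_k(\sg;\Q)$ for $k\geq 2$, yields a $\gg$-equivariant short exact sequence
\[
0 \to \H \to H_2\pa{F_2(\sg);\Q} \to \Ker\pa{\omega\colon \H\otimes\H\to\Q} \to 0,
\]
where $\omega$ is the intersection form on $\H=H_1(\sg;\Q)$, the subobject is the image of $H_1(\Delta)\to H_2(F_2(\sg))$ sending a cycle $\sigma$ to the linking torus $\sigma\times S^1$, and the quotient map is the inclusion-induced pushforward to $H_2(\sg\times\sg)\cong \H\otimes\H$.

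Next, passing to $\fS_2$-invariants (exact in characteristic zero), using that $\fS_2$ acts by the swap on $\H\otimes\H$ and trivially on the linking $\H$-subobject (the antipodal map on the normal $S^1$ is orientation-preserving, hence acts as $+1$ on $H_1(S^1)$), and that $\omega$ vanishes on the symmetric part of $\H\otimes\H$, I obtain a $\gg$-equivariant extension
\[
0 \to \H \to H_2\pa{C_2(\sg);\Q} \to \Sym_2(\H) \to 0.
\]
For $g\geq 2$, $\H$ and $\Sym_2(\H)$ are non-isomorphic irreducible $Sp_{2g}(\Q)$-representations, so any extension between them as $Sp_{2g}(\Q)$-representations splits. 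Therefore $H_2(C_2(\sg);\Q)$ is a symplectic $\gg$-representation if and only if the displayed extension splits as a sequence of $\gg$-representations.

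The main step is then to show that this extension does not split. Restricting its class in $H^1(\gg;\Hom(\Sym_2(\H),\H))$ along the inclusion $\Ig\hookrightarrow\gg$ of the Torelli group yields a $\gg$-equivariant homomorphism
\[
\delta\colon \Ig\to\Hom(\Sym_2(\H),\H),\qquad \delta(\phi)\pa{[c]\cdot[d]}=\phi_\ast[c\times d]-[c\times d],
\]
for disjoint simple closed curves $c,d\subset\sg$; the difference lies automatically in the $\H$-subobject because any Torelli element acts trivially on the quotient $\Sym_2(\H)$. I would evaluate $\delta$ on a bounding pair map $\phi=\tau_\alpha\tau_\beta^{-1}$ (which exists precisely when $g\geq 2$) with curves $c,d$ chosen so that $c$ meets the subsurface bounded by $\alpha\cup\beta$ transversely while $d$ lies outside it, and carry out a direct geometric analysis of the cobordism in $\sg\times\sg$ between $c\times d$ and $\phi(c\times d)$; after resolving its finitely many intersections with the diagonal, the contribution in the $\H$-direction is identified with a non-zero linking class whose coefficient is controlled by the intersection number of $c$ with $\alpha\cup\beta$. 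Hence $\delta\neq 0$, the extension is non-split, and $H_2(C_2(\sg);\Q)$ is not symplectic.

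The main obstacle is precisely this geometric identification. It is exactly the step at which the $\Q$-argument diverges from the mod $2$ proof of Lemmas \ref{lem:tpsiwithproperties}--\ref{lem:tpsi->psi}: the M\"obius-band surgery used there to eliminate crossings with the diagonal carries a fundamental class only over $\Z_2$, whereas over $\Q$ any orientable capping chain leaves a non-trivial linking correction. Conceptually $\delta$ is expected to factor through the Johnson homomorphism $\tau\colon \Ig\to\Lambda^3\H$ followed by a symplectic contraction $\Lambda^3\H\to\Hom(\Sym_2(\H),\H)$, providing an a priori reason for non-vanishing whenever $g\geq 2$.
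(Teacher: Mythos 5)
Your framework---passing to $F_2(\sg)$, using the long exact sequence of the pair $(\sg\times\sg,F_2(\sg))$ and the Thom isomorphism for the diagonal, then taking $\fS_2$-invariants---is a genuinely different route from the paper, which instead works directly with explicit torus cycles in $C_2(\sg)$. However, there is a concrete error in the $\fS_2$-equivariance analysis. The swap $\sigma\colon\sg\times\sg\to\sg\times\sg$ acts on $H_2(\sg\times\sg)\cong H_1(\sg)\otimes H_1(\sg)$ \emph{with a Koszul sign}: $\sigma_*(\alpha\times\beta)=-\beta\times\alpha$ for $\alpha,\beta\in H_1$ (one can see this from the Eilenberg--Zilber shuffle map, or simply because the swap on a torus $T^2=S^1\times S^1$ is orientation-reversing). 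Hence the $\fS_2$-invariant part of $\Ker\omega$ is \emph{not} $\Sym_2(\H)$; since $\omega$ vanishes on $\Sym_2(\H)$ but is nonzero on $\Lambda^2(\H)$, the invariants of $\Ker\omega$ are the primitive part $\Lambda_0^2(\H)=\Ker\bigl(\omega\colon\Lambda^2\H\to\Q\bigr)$, and the correct extension reads $0\to\H\to H_2(C_2(\sg);\Q)\to\Lambda_0^2(\H)\to 0$. You can cross-check this against the paper's proof: the class $[a]=[c\times c']$ (with $c,c'$ parallel, $[c]=[c']$) pushes forward to $[c]\otimes[c']-[c']\otimes[c]=0$ in $H_2(\sg\times\sg)$, so $[a]$ lies in the $\H$-subobject, which is exactly what the paper's identity $\phi_*[b]-[b]=2[a]$ requires; in your symmetric picture $[a]$ would instead map to the nonzero class $2[c]\cdot[c]$ in the quotient, a contradiction. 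The representation-theoretic part of your argument survives the correction, since $\Lambda^2_0(\H)$ is also an irreducible symplectic representation distinct from $\H$ for $g\geq 2$, but the whole cocycle $\delta$ has the wrong target.

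Two further points. First, the ``symplectic iff the extension splits'' reduction is not justified as stated: the image of $\gg$ in $GL(\H\otimes\Q)$ is $Sp_{2g}(\Z)$, not $Sp_{2g}(\Q)$, and extensions of $Sp_{2g}(\Z)$-modules need not split; but this detour is also unnecessary, because $\delta\neq 0$ directly exhibits a Torelli element acting nontrivially, which is exactly what the definition of a symplectic representation forbids. Second, and most importantly, the step you yourself call ``the main obstacle''---evaluating $\delta$ on a bounding pair map and identifying a nonzero linking contribution---is not actually carried out; this is precisely the content of the paper's proof. The paper produces curves $d,d',c,c',c''$ and the subsurface $\check{\S}$ witnessing $[c\times d]-[c\times d']+[c\times c']-[c\times c'']=0$, shows $[c\times c'']=-[c\times c']$ via an orientation-reversing isotopy, shows $[c\times c']\neq 0$ by transverse intersection with the cell $e^{\tup}$ for $\tup=(0,(2,0,\dots),(0,\dots))$, and then exhibits a bounding pair $D_\gamma D_{\gamma'}^{-1}$ sending $[c\times d]$ to $[c\times d']=[c\times d]+2[c\times c']$. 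Until an analogous explicit computation is supplied (and the target of $\delta$ corrected), the proposal remains a plan rather than a proof.
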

\begin{proof}
 Let again $\S=\Sigma_{g,1}$. We will use the following strategy:
 \begin{itemize}
  \item we define a homology class $[a]\in H_2(C_2(\S);\Q)$ represented by a cycle $a$;
  \item we prove that $[a]\neq 0$ by computing the algebraic intersection of
  $[a]$ with a homology class
  in $\tH_2(\cms^{\infty};\Q)$: note that the manifold
  $\cms$ is orientable, hence Poincaré-Lefschetz duality holds
  also with rational coefficients;
  \item we define another homology class $[b]\in H_2(\cms;\Q)$ and show that
  $[b]$ is mapped to $[b]+2[a]$ by some element in the Torelli group $\mathcal{I}_{g,1}$.
 \end{itemize}
Recall that the Torelli group $\mathcal{I}_{g,1}$ is the kernel of the surjective homomorphism
$\gg\to Sp_{2g}(\Z)$ induced by the action of $\gg$ on $H_1\pa{\sg;\Z}$; if an element of the Torelli group acts non-trivially on
some class in $H_2(C_2(\S);\Q)$, the latter cannot be a symplectic representation of $\gg$.

We consider again $\T(\S)$ as model for $\mrS$. Let $c$ be an simple closed curve
representing the homology class $\u_1$, and assume that $c$ intersects $\U_1$ once
transversely and is disjoint from all other $\U_i$'s and from all $\V_i$'s. Let $c'$
be a parallel copy of $c$.

We consider the torus $a=c\times c'$ of configurations in $C_2(\S)$ having one point lying on $c$ and
one lying on $c'$; we let $[a]\in H_2(C_2(\S);\Q)$ be the fundamental class of $a$ associated with one
orientation of $a$.

Let $\tup=(0,\uu,\uv)$ with $\uu=(2,0)$ and $\uv=(0,0)$.
Then the map
\[
\Phi_{\tup}\colon (\Delta^{\tup},\partial\Delta^{\tup})\to (C_2(\S)^{\infty},\infty)
\]
maps the fundamental class in $H_2(\Delta^{\tup},\partial\Delta^{\tup};\Q)$ to a
class in $\tH_2(C_2(\S)^{\infty};\Q)$, that we call $[\kappa(0,\uu,\uv)]_{\Q}$.

The cell $e^{\tup}$ intersects once, transversely the torus $a$, therefore the
algebraic intersection $[a]\cap[\kappa(0,\uu,\uv)]_{\Q}$ is $\pm 1$,
where the signs depends on how we have chosen orientations on $a$, $e^{\tup}$ and $C_2(\S)$ itself.
In particular $[a]\neq 0$.

The action of $\gg$ on isotopy classes of non-separating simple closed curves is transitive, so we can repeat
the construction of the torus $a$ with any other copy of parallel, non-separating simple closed curves
$c$ and $c'$, and the resulting class $[a]$ will always be non-trivial in $H_2(C_2(\S);\Q)$.

See Figure \ref{fig:rational} to visualize the following discussion.
Let $d$ and $d'$ be disjoint, non-separating simple closed curves such that
the following holds: if we cut $\S$ along $d$ and $d'$ we separate $\S$ into two pieces, one of
which is a subsurface $\tilde{\S}\simeq\Sigma_{1,2}$ of genus 1, with boundary
curves $d$ and $d'$. Here we use that the genus of $\S$ is at least 2.

Suppose now that $c$ is a non-separating simple closed curve in $\tilde{\S}$,
and let $c',c''$ be two parallel copies of $c$ in $\Sigma_{1,2}$, one on each side of a small tubular
neighborhood of $c$.
Then $d,d',c',c''$ are the boundary of a subsurface $\check{\S}\simeq\Sigma_{0,4}\subset\tilde{\S}$.
Orient all curves $d,d',c,c',c''$ in such a way that the following equalities hold:
\begin{itemize}
 \item $[d]=[d']\in H_1(\S;\Q)$, as witnessed by the subsurface $\tilde{\S}$;
 \item $[c]=[c']=[c'']\in H_1(\S;\Q)$;
 \item $[d]-[d']+[c']-[c'']=0\in H_1(\S\setminus c;\Q)$, as witnessed by the subsurface $\check{\S}$.
\end{itemize}

\begin{figure}[ht]\centering
 \includegraphics[scale=1.0]{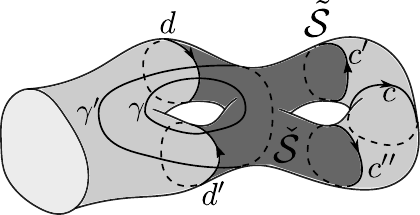}
 \caption{The curves $d,d',c,c',c'',\gamma,\gamma''$ and the subsurfaces $\tilde{\S},\check{\S}$.}
\label{fig:rational}
\end{figure}

The four tori $c\times c'$, $c\times c''$, $c\times d$ and $c\times d'$ are contained in $C_2(\S)$
and the equality
\[
 [c\times d]-[c\times d']+[c\times c']-[c\times c'']=0\in H_2(C_2(\S))
\]
holds, as witnessed by the homology $c\times \check{\S}\subset C_2(\S)$.

Moreover the classes $[c\times c']$ and $-[c\times c'']$ are \emph{equal}:
indeed there is an isotopy of $\S$ mapping $c$ to $c'$ and $c''$ to $c$,
as oriented curves; the class $[c\times c'']$ is mapped to the class $[c'\times c]=-[c\times c']$.

Let again $[a]=[c\times c']$ and let $[b]=[c\times d]$: then the class $[b']=[c\times d']$
is equal to $[b]+2[a]\in H_2(C_2(\S);\Q)$.

Consider now an element of the Torelli group that fixes $c$ and maps $d$ to $d'$ preserving
the orientation, for example the bounding pair $D_{\gamma}\circ D_{\gamma'}^{-1}$, where
$\gamma$ and $\gamma'$ are represented in Figure \ref{fig:rational}:
then the class $b$ is mapped to $b'=b+2a\neq b$.
\end{proof}

The previous proof works verbatim if we replace $\Q$ by any field of odd characteristic. Moreover
all the arguments in the previous proof can be adapted to show that $H_*(F_2(\S);\mathbb{F})$ is
not a symplectic representation of $\gg$ (see Definition \ref{defn:cms}), where
$\mathbb{F}$ is \emph{any} field, including $\Z_2$.

Therefore, at least for orientable surfaces $\sg$ of genus $g\geq 2$,
being a symplectic representation of the mapping class group seems to be a peculiarity
of the homology of \emph{unordered} configuration space and of the characteristic 2.

\bibliography{Bibliography.bib}{}
\bibliographystyle{plain}

\end{document}